\title{On minimum Bregman divergence inference.}
\author{
 Soumik Purkayastha \\
  Department of Biostatistics\\
  University of Michigan\\
  Ann Arbor, MI, USA \\
  \texttt{soumikp@umich.edu} \\
   \And
 Ayanendranath Basu \\
 Interdisciplinary Statistical Research Unit\\
 Indian Statistical Institute\\
 Kolkata, WB, INDIA \\
 \texttt{ayanbasu@isical.ac.in} \\
}
\numberwithin{equation}{section}
\theoremstyle{plain}
\newtheorem{remark}{Remark}
\newtheorem{thm}{Theorem}[section]
\begin{document}
\maketitle
\begin{abstract}
In this paper a new family of minimum divergence estimators based on the Bregman divergence is proposed.
The popular density power divergence (DPD) class of estimators is a sub-class of Bregman divergences.
We propose and study a new sub-class of Bregman divergences called the exponentially weighted divergence (EWD).
Like the minimum DPD estimator, the minimum EWD estimator is recognised as an M-estimator. This characterisation is useful while discussing the asymptotic behaviour as well as  the robustness properties of this class of estimators. Performances of the two classes are compared -- both through simulations as well as through real life examples. We develop an estimation process not only for independent and homogeneous data, but also for non-homogeneous data. 
General tests of parametric hypotheses based on the Bregman divergences are also considered. We establish the asymptotic null distribution of our proposed test statistic and explore its behaviour when applied to real data. The inference procedures generated by the new EWD divergence appear to be competitive or better that than the DPD based procedures.
\end{abstract}


\section{Introduction}
\label{sec:intro}  

Density based minimum divergence methods are popular tools in statistical inference. In case of the estimation problem, this amounts to estimating the parameters of interest
by minimising an empirical version of some suitably chosen divergence between the `true' density underlying the data and the assumed model density. Many of these methods combine strong robustness properties with high asymptotic efficiency, which is one of the reasons for their popularity.
An important class of density based divergences useful in this context is the class of $\phi$ divergences (see \cite{csiszar1963}). Under standard regularity conditions, all minimum $\phi$ divergence estimators have full asymptotic efficiency at the model (\cite{lindsay94}); many of them also have attractive robustness properties. A seminal work by \cite{beran1977}, who investigated the minimum Hellinger distance estimator (MHDE), appears to be the first which demonstrated that strong robustness properties may be achieved simultaneously with full asymptotic efficiency. Later, the same has been demonstrated with respect to much of the $\phi$ divergence class (see, eg., \cite{basu2011}). The usefulness of the corresponding procedures in providing robust alternatives to the likelihood ratio test has also been explored in the literature (\cite{simpson1989}; \cite{lindsay94}; \cite{basu2011}). The extension of this approach to problems beyond the simple i.i.d. set-up
has also been attempted by several later authors. On the whole, the utility of the minimum divergence procedures based on $\phi$ divergences is well established in the literature.

One of the major criticisms
 of this inferential procedure is that it involves the use of some form of non-parametric smoothing (such as kernel-based density estimation) to produce a continuous estimate of the true density (which is necessary to construct the divergence when the model density is continuous). While kernel density estimation (or other suitable non-parametric smoothing techniques) represents a very important class of statistical procedures, it involves a lot of complication 
 and the bandwidth selection issue can throw up many potential difficulties.
 The slow convergence of the kernel density estimator to the `truth' for high dimensional data adds another facet to the problem. The complicated nature of the estimating equation also makes the theoretical derivations harder. Development of methods which eliminate these difficulties may be worthwhile even if they involve a small loss in asymptotic efficiency.

An alternative class of minimum divergence estimators which does not require non-parametric smoothing in the construction of the empirical divergence is the class of minimum Bregman divergence estimators.
An important example of divergences in this class is the family of  density power divergences (DPD($\alpha$), where $\alpha$ is the tuning parameter); the corresponding minimum density power divergence estimators (MDPDE($\alpha$)) have been shown to combine strong robustness properties with high asymptotic efficiency (see \cite{basu1998}). Divergences within the Bregman class have been called decomposable divergences by \cite{broniatowski2012decomposable} and non-kernel divergences by \cite{jana2019characterization}. These divergences have simple estimating equations and much of their asymptotic properties can be obtained from the M-estimation theory. The  Kullback-Leibler divergence, which is a decomposable divergence, is the only common member between the $\phi$ divergence class and the Bregman divergence class.

In the context of robust parametric estimation, specifically in the context of density-based minimum divergence estimation with a view to robustness, we have several `good' choices available. In order to justify the development of another family of estimators, one must demonstrate that the new estimators are competitive, if not better than the existing standard. Within the class of  minimum divergence estimators which do not require any nonparametric smoothing, the MDPDE($\alpha$) is the current standard. In this paper, we  will develop a family of divergences yielding minimum divergence estimators which appear to satisfy this requirement; at the least, this family provides a highly competitive standard.
Our proposed class of divergences will be called the exponentially weighted divergence family, indexed by a tuning parameter $\beta$ (henceforth referred to as EWD($\beta$)). The corresponding minimum exponentially weighted divergence estimator will be denoted by MEWDE($\beta$).

Like estimation, hypothesis testing is another fundamental area of statistical inference. Although the likelihood ratio test is a well studied component of classical hypothesis testing theory, it is known to have very poor robustness under model misspecification and presence of outliers. Many density based minimum distance procedures have been observed to have strong robustness properties in estimation and testing together with high efficiency, eg.,  \cite{pardo2006statistical} and \cite{basu2013testing, basu2018testing}. The last mentioned paper addresses the general problem of parametric hypothesis testing of composite null hypotheses based on the density power divergence alone. The theoretical results presented in the present paper extend the said testing procedure to the entire class of Bregman divergence, with special emphasis on our proposed EWD($\beta$) class of divergences.

\section{The exponentially weighted divergence}
\label{sec: ewd}
Originally defined in the context of convex programming by \cite{bregman1967}, the {\em Bregman divergence} for $p,q \in \mathbb{R}^d$ is defined as
\begin{equation*}
{D_{B}(p,q)=B(p)-B(q)-\langle \nabla B(q),p-q\rangle},
\end{equation*}
where $\nabla$ denotes the gradient of a function with respect to its arguments and $\langle x, y\rangle$ denotes the inner product of $x$ and $y$. The function $B: \mathbb{R}^d \rightarrow \mathbb{R}$ is strictly convex and consequently, the measure $D_B(x, y)$ is non-negative and equals zero only when $x = y$. Extending this formulation to the case of two probability density functions (pdf) ${g}$ and ${f}$, we define the divergence
\begin{equation}
{D_{B}(g, f)=\int_x [B(g(x))-B(f(x))- (g(x)-f(x))B'(f(x))]dx},
\label{eq1}
\end{equation}
where $B'(\cdot)$ is the derivative of $B$ with respect to its argument. Since the integrand is non-negative for each $x$, it follows that $D_B(g, f)$ is non-negative. Moreover, the measure is zero when its arguments are identically equal. \cite{csiszar1991} discuss this and similar measures in greater detail. We note that the convex functions $B(y)$ and $B^{*}(y) = B(y) + ay + c$ generate identical divergences in Equation (\ref{eq1}) for $a, c \in \mathbb{R}$.

The minimum Bregman divergence estimation procedure based on a general convex $B$ function may be described as follows. Given an i.i.d. random sample $X_1, \ldots, X_n$ from the distribution $G$, we model these data by a parametric family (of densities $f_{\theta}$, indexed by the parameter $\theta$) $\mathscr{F}_{\bm{\theta}} := \{f_{\theta}:\ \theta \in \Omega \subset {\mathbb R}^p\}$. 
Specifically, we wish to estimate the value of the model parameter $\bm{\theta}$ by choosing the model density which gives the closest fit to the data 
in the minimum Bregman divergence sense. Let $g$ and $f_\theta$ be the density functions associated with distribution functions $G$ and $F_\theta$ respectively. 
An empirical version  $D_B(g, f)$, given by the right side of Equation (\ref{eq1}) with $f$ replaced by the model element $f_\theta$  may now be obtained as 
\begin{equation*}
  \int   [ B(g(x)) - B(f_{\bm{{\bm{\theta}}}}(x))] dx - n^{-1} \sum_{i=1}^{n} B'(f_{\bm{{\bm{\theta}}}}(X_i)) + \int B'(f_{\bm{{\bm{\theta}}}}(x)) f_{\bm{{\bm{\theta}}}}(x) dx.
\end{equation*}
Here we have replaced the theoretical mean $\int B'(f_{\bm{{\bm{\theta}}}}(x)) g(x)  dx$ with the sample mean based on $X_1, \ldots, X_n$. Grouping the terms of the above equation including terms based only on $f_{\bm{{\bm{\theta}}}}$, terms based both on $g$ and $f_{\bm{{\bm{\theta}}}}$ and terms based only on $g$, the above objective function may be expressed as
\begin{equation}
\label{eq2}
  \int  \big[f_{{\bm{{\bm{\theta}}}}}(x)B'(f_{{\bm{{\bm{\theta}}}}}(x)) - B(f_{{\bm{{\bm{\theta}}}}}(x))\big] dx - n^{-1} \sum_{i=1}^{n} B'(f_{{\bm{{\bm{\theta}}}}}(X_i)) + \int   B(g(x)) dx.
\end{equation}
The last term in the above expression may be ignored as it has no role in optimisation over ${\bm{{\bm{\theta}}}} \in \Omega$.

Let ${u_{{\bm{{\bm{\theta}}}}}}(x) = \nabla_{{\bm{{\bm{\theta}}}}} \log(f_{{\bm{{\bm{\theta}}}}}(x))$ be the likelihood score function of the model being considered, where $\nabla_{\bm{{\bm{\theta}}}}$ represents the gradient of a function with respect to ${\bm{{\bm{\theta}}}}$. Under differentiability of the model with respect to ${\bm{{\bm{\theta}}}}$, minimisation of the expression in Equation (\ref{eq2}) leads to the estimating equation
\begin{equation}
\label{eq3}
    n^{-1} \sum_{i=1}^{n} u_{{\bm{{\bm{\theta}}}}}(X_i) B''(f_{{\bm{{\bm{\theta}}}}}(X_i))f_{{\bm{{\bm{\theta}}}}}(X_i) = \int u_{{\bm{{\bm{\theta}}}}}(x)  B''({f_{\bm{{\bm{\theta}}}}}(x)) f_{{\bm{{\bm{\theta}}}}}^2(x) dx, 
\end{equation}
where $B''(\cdot) (\text{or } B'''(\cdot))$ is the indicated second (or third) derivative. This may be viewed as a generalised likelihood equation, or a weighted likelihood equation having the form
\begin{equation}
\label{eq3a}
    n^{-1} \sum_{i=1}^{n}  u_{{\bm{{\bm{\theta}}}}}(X_i) w(f_{{\bm{{\bm{\theta}}}}}(X_i)) = \int u_{{\bm{{\bm{\theta}}}}}(x) w(f_{{\bm{{\bm{\theta}}}}}(x))  f_{{\bm{{\bm{\theta}}}}}(x)    dx,
\end{equation}
where the usual likelihood score equation is recovered for $w(t) \equiv 1 $. Comparing Equations (\ref{eq3}) and (\ref{eq3a}), we obtain
\begin{equation}
\label{eq4}
    w(f_{{\bm{{\bm{\theta}}}}}(x))=B''({f_{\bm{{\bm{\theta}}}}}(x)) f_{{\bm{{\bm{\theta}}}}}(x).
\end{equation}
Using convexity of $B$ and non negativity of $f_{\bm{{\bm{\theta}}}}$, it follows that $w$ will be non-negative. We see that the Kullback-Leibler divergence, the  $L_2$ divergence and more generally, the density power divergence DPD($\alpha$)  are all special cases of the Bregman divergence; the corresponding $B$ functions are $x\log(x)$, $x^2$ and $\alpha^{-1}x^{1+\alpha}$ respectively. The Kullback-Leibler divergence is
\begin{equation*}
  D_{KL}(g, f_{{\bm{{\bm{\theta}}}}}) = \int g(x)\ \log \Bigg(\frac{g(x)}{f_{{\bm{{\bm{\theta}}}}}(x)}\Bigg) dx,
\end{equation*}
while the (squared) $L_2$ distance is
\begin{equation*}
  D_{L_{2}}(g, f_{{\bm{{\bm{\theta}}}}}) = \int [g(x) - f_{{\bm{{\bm{\theta}}}}}(x)]^2dx,
\end{equation*}
and the general form of DPD($\alpha$) is
\begin{equation*}
  DPD_{\alpha}(g, f_{{\bm{{\bm{\theta}}}}}) = \int \Big[ f_{{\bm{{\bm{\theta}}}}}^{1+\alpha}(x) - \Big(1+\frac{1}{\alpha}\Big) g(x) f^{\alpha}_{{\bm{{\bm{\theta}}}}}(x) + \frac{1}{\alpha} g^{1+\alpha}(x)\Big] dx,\ \ \  \alpha > 0.
\end{equation*}

In order to develop new estimation procedures based on Bregman divergences, one can do one of two things: (a) start with a specific convex function $B$ and construct a weighted likelihood equation as given in Equation (\ref{eq3}), or (b) begin with a suitable weight function (motivated by considerations of robustness), and the associated weighted likelihood representation as in Equation (\ref{eq3a}) and backtrack to recover the corresponding convex function $B$. We choose the latter approach. See \cite{biswas2019} for a general discussion on Bregman divergences and weighted likelihood.

Philosophically, our treatment of outliers is probabilistic, in that an outlying point is one which has a small probability of occurrence under a given model $f_{{\bm{{\bm{\theta}}}}} \in \mathscr{F}_{{\bm{{\bm{\theta}}}}}.$ We choose to downweight those observations in the estimating equation for which the value of $f_{{\bm{\theta}}}(x)$ is small. We plot the weight functions defined in Equation (\ref{eq5}) for some members of the DPD($\alpha$) family at different values of $\alpha$ in Figure \ref{fig:f1}. While the weight function is a constant (equal to 1) at $\alpha=0$, all positive values of $\alpha$ downweight observations  having density $f_{\bm{\theta}}(x) \leq 1.$ The strength of downweighting increases with increasing $\alpha$.  For $f_{\bm{\theta}}(x) > 1$, the weights grow unboundedly for all $\alpha > 0$ as the argument increases. The measure DPD(0) corresponds, in a limiting sense, to the Kullback-Leibler divergence which is minimized by the MLE. From Figure \ref{fig:f1}, we note that the MLE gives equal weight to all observations, including outlying ones, leading to its poor robustness properties. The measure DPD(1) corresponds to the squared $L_2$ distance.
\begin{figure}[ht]
  \centering
  \includegraphics[width = 0.8\textwidth]{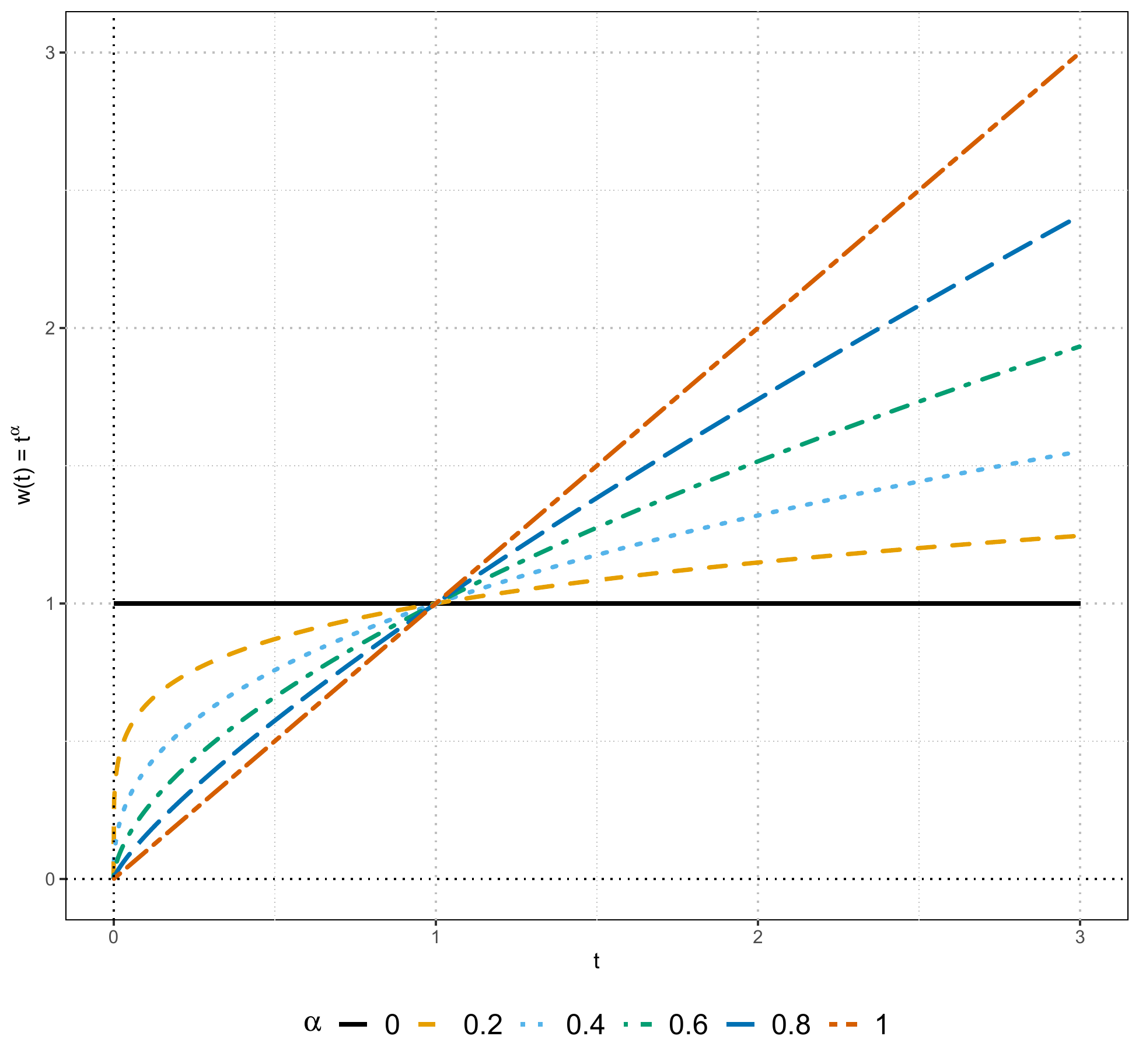}
  \caption{Weight functions of some DPD($\alpha$) members. }\label{fig:f1}
\end{figure}

We propose a new class of divergences based on a different choice of the weight function
\begin{equation}
\label{eq5}
    w_{\beta}(t) = \left\{
	\begin{array}{ll}
		1 - \exp(-t/\beta) & \mbox{if } \beta > 0, \\
		1 & \mbox{if } \beta = 0.
	\end{array}
\right.
\end{equation}
%
%
%
%
%
These weights smoothly drop to zero for decreasing values of the probability density function $f_{\bm{\theta}}(x)$ for $\beta > 0$. However, unlike the DPD($\alpha$) weights, they are bounded above by 1. We plot the weight functions given by Equation (\ref{eq5}) for specific values of $\beta$ in Figure \ref{fig:f2}.
\begin{figure}[ht]
  \centering
  \includegraphics[width = 0.8\textwidth]{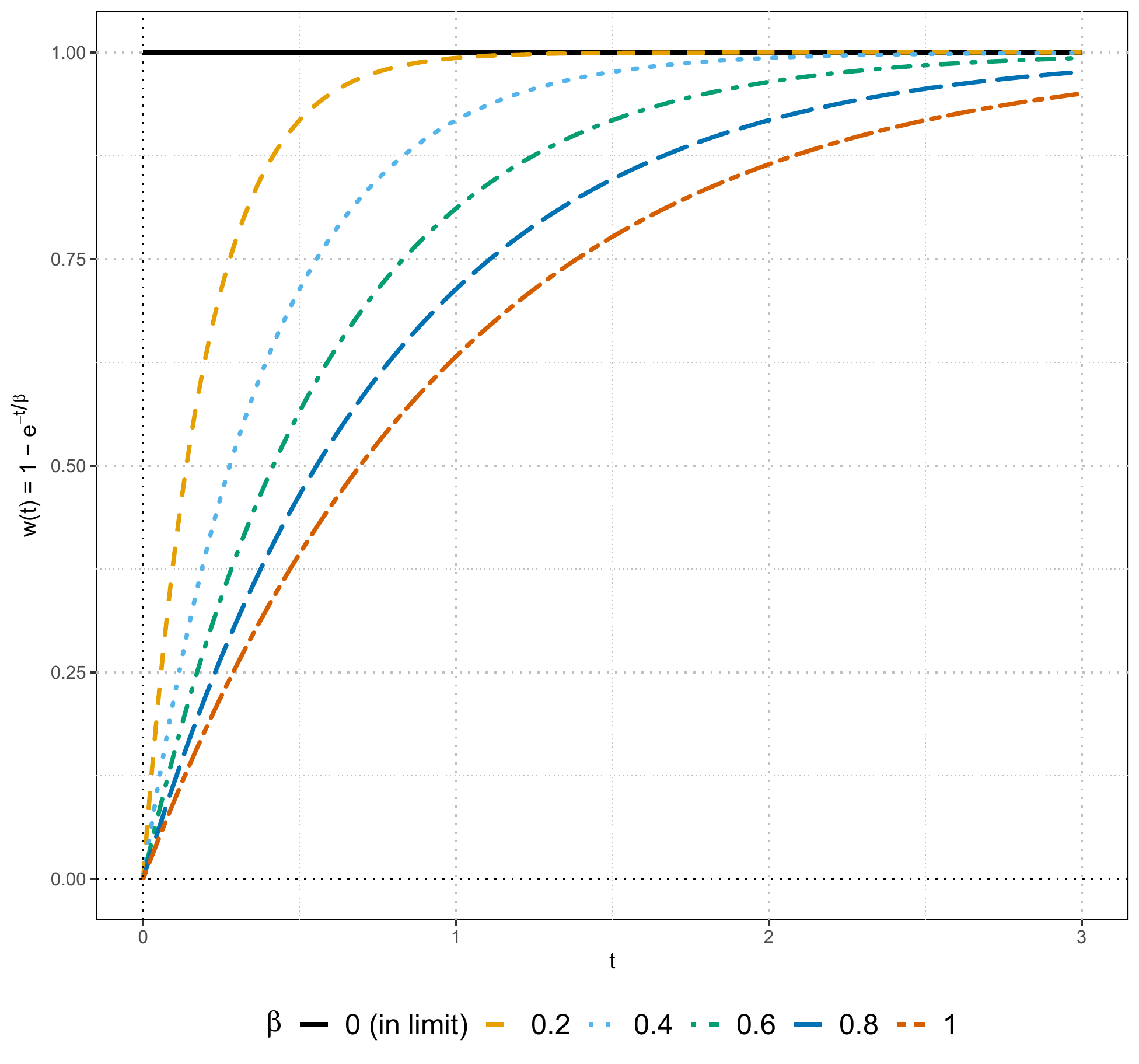}
  \caption{Weight functions of some EWD($\beta$) members. }\label{fig:f2}
\end{figure}
The likelihood equation may be recovered at $\beta = 0$, where, to avoid the complications of division by zero, the weights have been defined by the corresponding limiting case as $\beta \rightarrow 0$. Using Equation (\ref{eq4}), we recover the divergence (or rather, the associated $B$ function). This function is given by
\begin{equation*}
    B(x) = \frac{x^2}{\beta}\Big[ \sum_{n=0}^{\infty} \frac{(-x/\beta)^n}{(n+2)! (n+1) }\Big].
\end{equation*}
In Appendix \ref{appA}, we show that this can be further simplified to
\begin{equation}
\label{eq6}
  B(x) = -x + \gamma x + \beta  - \beta \exp(-x/\beta) + x \Gamma(0, x/\beta) + x\log(x/\beta),
\end{equation}
where $\gamma$ is the Euler-Mascheroni constant $$\gamma = \underset{n \rightarrow \infty}{\lim} \Bigg(\sum_{k=1}^{n} \frac{1}{k} - \log n\Bigg) = - \int_{0}^{\infty} \log(t)\exp(-t) dt,$$ and $\Gamma(\alpha,\beta)$ is the incomplete Gamma integral defined as $$\Gamma(\alpha,\beta) = \int_{\beta}^{\infty} y^{\alpha-1} \exp(-y) dy.$$  The associated Bregman divergence (which we will refer to as the {exponentially weighted divergence} EWD($\beta)$) has the form
\begin{equation}
\label{eq.ewd}
    \text{EWD}(\beta) = \int  \big[f_{{\bm{\theta}}}(x)B'(f_{{\bm{\theta}}}(x)) - B(f_{{\bm{\theta}}}(x))\big] dx - n^{-1} \sum_{i=1}^{n} B'(f_{{\bm{\theta}}}(X_i)),
\end{equation}
where $B$ is given by Equation (\ref{eq6}). The resultant estimating equation has the form
\begin{equation*}
     n^{-1} \sum_{i=1}^{n} u_{{\bm{\theta}}}(X_i) [1-\exp(-f_{{\bm{\theta}}}(X_i)/\beta)] = \int u_{{\bm{\theta}}}(x) [1-\exp(-f_{{\bm{\theta}}}(x)/\beta)] f_{{\bm{\theta}}}(x) dx.
\end{equation*}
We note that the EWD($\beta$) family can also be generated by the simplified $B$ function
\begin{equation*}
B(x) = - \beta \exp(-x/\beta) + x \Gamma(0, x/\beta) + x\log(x/\beta).
\end{equation*}

\section{Properties}
\subsection{Link with M-estimation}
\label{subsec:m-est}
Given i.i.d. observations $X_1, \ldots, X_n$ from a distribution modeled by the parametric family $\mathscr{F}_{\bm{{\bm{\theta}}}}$, an M-estimator of the target parameter ${\bm{{\bm{\theta}}}}$ may be obtained by solving an estimating equation of the form $\sum_{i=1}^{n} \psi (X_i, {\bm{{\bm{\theta}}}}) = 0$ (see, e.g., \cite{huber2009} and \cite{hampel1986} for more details). Any minimum Bregman divergence estimator is also an M estimator. The $\psi$ function associated with the minimum Bregman divergence estimator is
\begin{equation}
\label{eq7}
    \psi(x, {\bm{{\bm{\theta}}}}) = u_{{\bm{{\bm{\theta}}}}}(x) B''(f_{{\bm{{\bm{\theta}}}}}(x))f_{{\bm{{\bm{\theta}}}}}(x) - \int_t u_{{\bm{{\bm{\theta}}}}}(t) B''(f_{{\bm{{\bm{\theta}}}}}(t))f^2_{{\bm{{\bm{\theta}}}}}(t) dt.
\end{equation}
We note that the $\psi$ function in this case makes explicit use of the form of the pdf of the model unlike the location-scale form used commonly in M-estimation. For MEWDE($\beta$), in particular, the associated $\psi$ function is
\begin{equation*}
    \psi(x, {\bm{{\bm{\theta}}}}) = u_{{\bm{{\bm{\theta}}}}}(x) [1 - \exp(-f_{{\bm{{\bm{\theta}}}}}(x)/\beta)] - \int_t u_{{\bm{{\bm{\theta}}}}}(t) [1 - \exp(-f_{{\bm{{\bm{\theta}}}}}(t)/\beta)]f_{{\bm{{\bm{\theta}}}}}(t) dt.
\end{equation*}

\subsection{Asymptotic properties}
\label{subsec:asymp}
We present results related to the asymptotic distribution of any minimum Bregman divergence based estimator in general and the MEWDE($\beta$) in particular, when the true distribution $G$ from which the data are generated is not necessarily in the model under study. The theoretical estimating equation is $\int \psi(x, {\bm{{\bm{\theta}}}})dG(x) = 0$, where $\psi(x, {\bm{{\bm{\theta}}}})$ is given by Equation (\ref{eq7}). We observe that the functional $T_{\beta}(G)$ associated with MEWDE($\beta$) is  Fisher consistent; it recovers the value ${\bm{{\bm{\theta}}}}_0$ when the true distribution $G = F_{{\bm{{\bm{\theta}}}}_0}$ is a member of the parametric family being used to model the given data (this is true for any minimum Bregman divergence based estimator in general). When $G$ is not in the model, our best fitting parameter ${\bm{{\bm{\theta}}}}_g = T_{\beta}(G)$ will be the root of the theoretical estimating equation
\begin{equation}
\label{eq8}
  \int_{x} u_{{\bm{{\bm{\theta}}}}}(x)B''(f_{{\bm{{\bm{\theta}}}}}(x))f_{{\bm{{\bm{\theta}}}}}(x) dG(x) = \int_{x} u_{{\bm{{\bm{\theta}}}}}(x)B''(f_{{\bm{{\bm{\theta}}}}}(x))f_{{\bm{{\bm{\theta}}}}}(x)dF_{\bm{{\bm{\theta}}}}(x).
\end{equation}
Let $X_1, \ldots, X_n$ be a random sample from the distribution $G$ having density $g$. The minimum Bregman divergence estimator for this sample is obtained as a solution of Equation (\ref{eq3}) via the minimization of the quantity given by Equation (\ref{eq2}) for a given $B$ function. We define
\begin{equation}
\label{eq9}
\begin{aligned}
    H_{n}({\bm{{\bm{\theta}}}}) &= \int  \big[f_{{\bm{{\bm{\theta}}}}}(x)B'(f_{{\bm{{\bm{\theta}}}}}(x)) - B(f_{{\bm{{\bm{\theta}}}}}(x))\big] dx - n^{-1} \sum_{i=1}^{n} B'(f_{{\bm{{\bm{\theta}}}}}(X_i))\\
    &= n^{-1} \sum_{i=1}^{n} V_{{\bm{{\bm{\theta}}}}}(X_i),
\end{aligned}
\end{equation}
where $V_{{\bm{{\bm{\theta}}}}}(t) = \int  \big[f_{{\bm{{\bm{\theta}}}}}(x)B'(f_{{\bm{{\bm{\theta}}}}}(x)) - B(f_{{\bm{{\bm{\theta}}}}}(x))\big] dx - B'(f_{{\bm{{\bm{\theta}}}}}(t))$. As the population analogue to Equation (\ref{eq9}), we define
\begin{equation}
\label{eq10}
  H({\bm{{\bm{\theta}}}}) = \int  \big[f_{{\bm{{\bm{\theta}}}}}(x)B'(f_{{\bm{{\bm{\theta}}}}}(x)) - B(f_{{\bm{{\bm{\theta}}}}}(x))\big] dx - \int B'(f_{{\bm{{\bm{\theta}}}}}(x))dG(x).
\end{equation}
We also define the following quantities. 
\begin{enumerate}
  \item The information function of the model: $I_{{\bm{{\bm{\theta}}}}}(x) = -\nabla_{\bm{{\bm{\theta}}}} u_{{\bm{{\bm{\theta}}}}}(x)$.
  \item The covariance function of $T(X) = u_{{\bm{{\bm{\theta}}}}}(X)B''(f_{{\bm{{\bm{\theta}}}}}(X))f_{{\bm{{\bm{\theta}}}}}(X)$ under $G$, which has the form
  \begin{equation}
  \label{eq11}
    \begin{aligned}
K({\bm{\theta}}) &= \int u_{{\bm{\theta}}}(x) u^T_{{\bm{\theta}}}(x) [B''(f_{{\bm{\theta}}}(x))f_{{\bm{\theta}}}(x)]^2 dG(x) - \xi({\bm{\theta}})\xi^T({\bm{\theta}}),\\
\xi({\bm{\theta}}) &= \int u_{{\bm{\theta}}}(x)B''(f_{{\bm{\theta}}}(x))f_{{\bm{\theta}}}(x) dG(x).
    \end{aligned}
\end{equation}
 \item The function $J({\bm{\theta}})$, where
  \begin{equation}
    \label{eq12}
    \begin{aligned}
J({\bm{\theta}}) &= \int u_{{\bm{\theta}}}(x) u^T_{{\bm{\theta}}}(x) B''(f_{{\bm{\theta}}}(x))f^2_{{\bm{\theta}}}(x) dx \\
          & \quad + \int [ I_{{\bm{\theta}}}(x) - u_{{\bm{\theta}}}(x) u^T_{{\bm{\theta}}}(x)  h(x)]\\
          & \quad \quad \times [g(x) - f_{{\bm{\theta}}}(x)][B''(f_{{\bm{\theta}}}(x))f_{{\bm{\theta}}}(x)] dx
    \end{aligned}
\end{equation}
where $w(t) = B''(t) \cdot t$, $w'(t) = B''(t) + B'''(t) \cdot t$ and $$h(t) = \frac{w'(f_{{\bm{\theta}}}(t))f_{{\bm{\theta}}}(t)}{w(f_{{\bm{\theta}}}(t))}.$$
\end{enumerate}

Thereom \ref{thm:thm1} is provided under the set of assumptions given below. These may be viewed as generalizations of the conditions presented in \cite{basu2011} (which were designed specifically for the DPD class). The details of the proof are not presented here, as it mimics the approach of Theorem 9.2 of \cite{basu2011} exactly. 
\begin{enumerate}
    \item[(A1)] The distributions $F_{{\bm{\theta}}}$ of $X$ have common support, so that the set $\chi = \{x: f_{{\bm{\theta}}}(x) > 0\}$ is independent of ${\bm{\theta}}$. The distribution of $G$ is also supported on $\chi$, on which the corresponding density $g$ is greater than zero.
    \item[(A2)]  There is an open subset $\omega$ of the $s$-dimensional parameter space $\Omega$ containing the best fitting parameter ${\bm{\theta}}_{g}$ such that for almost all $x \in \chi$ and all ${\bm{\theta}} \in \omega$, the density $f_{{\bm{\theta}}}(x)$ is three times differentiable with respect to ${\bm{\theta}}$ and the third partial derivatives are continuous with respect to ${\bm{\theta}}$.
    \item[(A3)] The integrals $\int [f_{{\bm{\theta}}}(x) B'(f_{{\bm{\theta}}}(x)) - B(f_{{\bm{\theta}}}(x))] dx$ and $\int B'(f_{{\bm{\theta}}}(x)) g(x) dx$ can be differentiated three times with respect to ${\bm{\theta}}$ and the derivatives can be taken under the integral sign.
    \item[(A4)]  The $s \times s$ matrix $J(\bm{\theta})$, with its $(k,l)$ entry defined as
    \begin{equation}
        \label{eq13}
        J_{kl}({\bm{\theta}}) = \mathrm{E}_{g} \Big[\nabla_{kl} \Big\{ \int [ f_{{\bm{\theta}}}(x)B'(f_{{\bm{\theta}}}(x)) - B(f_{{\bm{\theta}}}(x))] dx -  B'(f_{{\bm{\theta}}}(x))\Big\}\Big],
    \end{equation}
    is positive definite. Here $\nabla_{kl}$ denotes the partial derivative of a function with respect to the $k$th and $l$th components of its argument and   ${E}_g$ represents the expectation under the density $g$. 
    \item[(A5)] There exists a function $M_{jkl}(x)$ such that
    \begin{equation*}
        \mid \nabla_{jkl} V_{{\bm{\theta}}}(x) \mid \  \leq M_{jkl}(x)\quad \forall \  {\bm{\theta}} \in \omega,
    \end{equation*}
    where $V_{{\bm{\theta}}}(x) = \int \big[ f_{{\bm{\theta}}}(x)B'(f_{{\bm{\theta}}}(x)) - B(f_{{\bm{\theta}}}(x)) \big] dx -  B'(f_{{\bm{\theta}}}(x))$ and $$\mathrm{E}_{g}[M_{jkl}(X)] = m_{jkl} < \infty \text{\quad for all } j, k \text{ and } l.$$
\end{enumerate}

\begin{thm}
\label{thm:thm1}
Assuming that conditions A1-A5 hold,
\begin{enumerate}
  \item The estimating equation given by Equation (\ref{eq3}) has a consistent sequence of roots $\hat{{\bm{\theta}}} = \hat{{\bm{\theta}}}_n$ and
  \item $\sqrt n (\hat{{\bm{\theta}}} - {\bm{\theta}}_g)$ has an asymptotic multivariate normal distribution with (vector) mean zero and covariance matrix $J^{-1}KJ^{-1}.$
\end{enumerate}
\end{thm}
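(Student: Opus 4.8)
The plan is to follow the classical two-step M-estimation argument (first consistency, then asymptotic normality), treating $H_n(\bm{\theta})$ of Equation~(\ref{eq9}) as the objective function and its gradient $\nabla_{\bm{\theta}} H_n(\bm{\theta}) = n^{-1}\sum_{i} \nabla_{\bm{\theta}} V_{\bm{\theta}}(X_i)$ as the estimating function; note that the latter, up to sign, is precisely the $\psi$ function of Equation~(\ref{eq7}) evaluated at the $X_i$, and that the population equation~(\ref{eq8}) says $\mathrm{E}_g[\nabla_{\bm{\theta}} V_{\bm{\theta}}(X)]$ vanishes at $\bm{\theta}_g$.

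\textbf{Step 1 (consistency).} First I would show that, for every sufficiently small $a > 0$, with probability tending to one the map $\bm{\theta} \mapsto H_n(\bm{\theta})$ has a local minimum in the ball $\{\bm{\theta}: \|\bm{\theta} - \bm{\theta}_g\| < a\}$. Expand $H_n(\bm{\theta})$ about $\bm{\theta}_g$ to second order with a third-order Lagrange remainder; by the strong law of large numbers the zeroth-, first- and second-order coefficients converge a.s.\ to those of $H(\bm{\theta})$ in Equation~(\ref{eq10}), namely $H(\bm{\theta}_g)$, $\nabla_{\bm{\theta}} H(\bm{\theta}_g) = 0$ (Equation~(\ref{eq8})), and $\nabla^2_{\bm{\theta}} H(\bm{\theta}_g) = J(\bm{\theta}_g) \succ 0$ (assumption~(A4)), while the remainder is dominated uniformly over the ball by $n^{-1}\sum_i M_{jkl}(X_i) \to m_{jkl} < \infty$ using~(A5). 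Hence on the sphere $\|\bm{\theta} - \bm{\theta}_g\| = a$ one has $H_n(\bm{\theta}) > H_n(\bm{\theta}_g)$ eventually, forcing an interior stationary point; continuity of the third derivatives~(A2) and differentiation under the integral~(A3) justify the expansions. This yields the consistent root sequence $\hat{\bm{\theta}}_n$ of Equation~(\ref{eq3}).

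\textbf{Step 2 (asymptotic normality).} Next I would Taylor-expand the estimating equation $\nabla_{\bm{\theta}} H_n(\hat{\bm{\theta}}_n) = 0$ about $\bm{\theta}_g$:
\begin{equation*}
0 = \nabla_{\bm{\theta}} H_n(\bm{\theta}_g) + \big[\nabla^2_{\bm{\theta}} H_n(\bm{\theta}_g)\big](\hat{\bm{\theta}}_n - \bm{\theta}_g) + R_n,
\end{equation*}
where, componentwise, $R_n$ is a quadratic form in $(\hat{\bm{\theta}}_n - \bm{\theta}_g)$ with coefficients bounded by $n^{-1}\sum_i M_{jkl}(X_i)$, so by~(A5) and Step~1 one has $R_n = o_p(\|\hat{\bm{\theta}}_n - \bm{\theta}_g\|)$. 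For the leading term, $\sqrt{n}\,\nabla_{\bm{\theta}} H_n(\bm{\theta}_g) = n^{-1/2}\sum_i \nabla_{\bm{\theta}} V_{\bm{\theta}_g}(X_i)$ is a normalised sum of i.i.d., mean-zero (by Equation~(\ref{eq8})) random vectors with covariance $K(\bm{\theta}_g)$ of Equation~(\ref{eq11}), so the multivariate central limit theorem gives $\sqrt{n}\,\nabla_{\bm{\theta}} H_n(\bm{\theta}_g) \xrightarrow{d} \mathcal{N}(0, K(\bm{\theta}_g))$. For the middle term, the strong law gives $\nabla^2_{\bm{\theta}} H_n(\bm{\theta}_g) \to \mathrm{E}_g[\nabla^2_{\bm{\theta}} V_{\bm{\theta}_g}(X)] = J(\bm{\theta}_g)$ a.s. Rearranging, $\sqrt{n}(\hat{\bm{\theta}}_n - \bm{\theta}_g) = -\big[\nabla^2_{\bm{\theta}} H_n(\bm{\theta}_g) + o_p(1)\big]^{-1}\sqrt{n}\,\nabla_{\bm{\theta}} H_n(\bm{\theta}_g)$, and Slutsky's theorem together with the invertibility of $J(\bm{\theta}_g)$ from~(A4) yields the claimed limit $\mathcal{N}(0, J(\bm{\theta}_g)^{-1} K(\bm{\theta}_g) J(\bm{\theta}_g)^{-1})$.

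\textbf{Main obstacle.} The SLLN/CLT applications and the remainder control are routine; the delicate bookkeeping is verifying that $\mathrm{E}_g[\nabla^2_{\bm{\theta}} V_{\bm{\theta}}(X)]$ really equals the expression displayed in Equation~(\ref{eq12}). This means differentiating twice under the integral sign in $\int [f_{\bm{\theta}} B'(f_{\bm{\theta}}) - B(f_{\bm{\theta}})]\,dx - \int B'(f_{\bm{\theta}})\,dG$, using $\nabla_{\bm{\theta}} f_{\bm{\theta}} = u_{\bm{\theta}} f_{\bm{\theta}}$ and $\nabla_{\bm{\theta}} u_{\bm{\theta}} = -I_{\bm{\theta}}$, and tracking the derivative of the weight $w(f_{\bm{\theta}}) = B''(f_{\bm{\theta}}) f_{\bm{\theta}}$, which is exactly where the factor $h(x) = w'(f_{\bm{\theta}}(x)) f_{\bm{\theta}}(x)/w(f_{\bm{\theta}}(x))$ enters; the purely model-based terms recombine via a Bartlett-type cancellation to leave $\int u_{\bm{\theta}} u_{\bm{\theta}}^{T} B''(f_{\bm{\theta}}) f_{\bm{\theta}}^{2}\,dx$, while the cross terms carrying the factor $[g - f_{\bm{\theta}}]$ survive. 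Making each interchange of $\nabla_{\bm{\theta}}$ and $\int$ rigorous is precisely what assumptions~(A3) and~(A5) are designed to license, and I would lean on the parallel computation in Theorem~9.2 of \cite{basu2011} for the DPD case, checking that nothing used there is special to the choice $B(x) = \alpha^{-1} x^{1+\alpha}$.
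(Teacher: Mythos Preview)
Your proposal is correct and is precisely the approach the paper takes: the paper does not give a proof but states that it ``mimics the approach of Theorem 9.2 of \cite{basu2011} exactly,'' which is the Lehmann-style local-minimum/Taylor-expansion M-estimation argument you have outlined. Your identification of the only nontrivial bookkeeping---verifying that $\mathrm{E}_g[\nabla^2_{\bm{\theta}} V_{\bm{\theta}}(X)]$ coincides with the $J(\bm{\theta})$ of Equation~(\ref{eq12}) and that nothing in the DPD computation is special to $B(x)=\alpha^{-1}x^{1+\alpha}$---is exactly the point the authors are implicitly deferring to that reference.
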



\subsection{Influence function and standard error}
\label{subsec:if}
Recalling our formulation of any minimum Bregman divergence based estimator (say, $T_B$) as an M-estimator, we observe that its influence function is given by
\begin{equation}
\label{eq14}
    IF(y, T_{B}, G) = J^{-1} [u_{{\bm{\theta}}}(y)B''(f_{{\bm{\theta}}}(y))f_{{\bm{\theta}}}(y) - \xi ],
\end{equation}
where $\xi$ and $J$ are given in Equations (\ref{eq11}) and (\ref{eq12}) repectively. These quantities get simplified further in case the true distribution $G$ belongs to the model under consideration. Assuming $J$ and $\xi$ to be finite, the influence function turns out to be bounded whenever the quantity $u_{{\bm{\theta}}}(y)B''(f_{{\bm{\theta}}}(y))f_{{\bm{\theta}}}(y)$ is bounded in $y$ (or, equivalently, when $u_\theta(y) w(f_\theta(y))$ is bounded in $y$).  This is true, for example, in case of all members of the DPD family with all $\alpha > 0$, and for most standard parametric families (including the normal location-scale family). In case of MEWDE($\beta$), the influence function is immediately seen to be
\begin{equation*}
    IF(y, T_{\beta}, G) = J^{-1} \Big[u_{{\bm{\theta}}}(y)\big[ 1 - \exp (f_{{\bm{\theta}}}(y)/\beta) \big] - \xi \Big].
\end{equation*}
In Figure \ref{fig:f3}, the influence functions of MEWDE($\beta$) for the estimation of the normal mean when $\sigma=1$ are plotted; for all $\beta > 0$ considered here, we note their bounded redescending nature.
\begin{figure}[h]
  \centering
  \includegraphics[width = 0.8\textwidth]{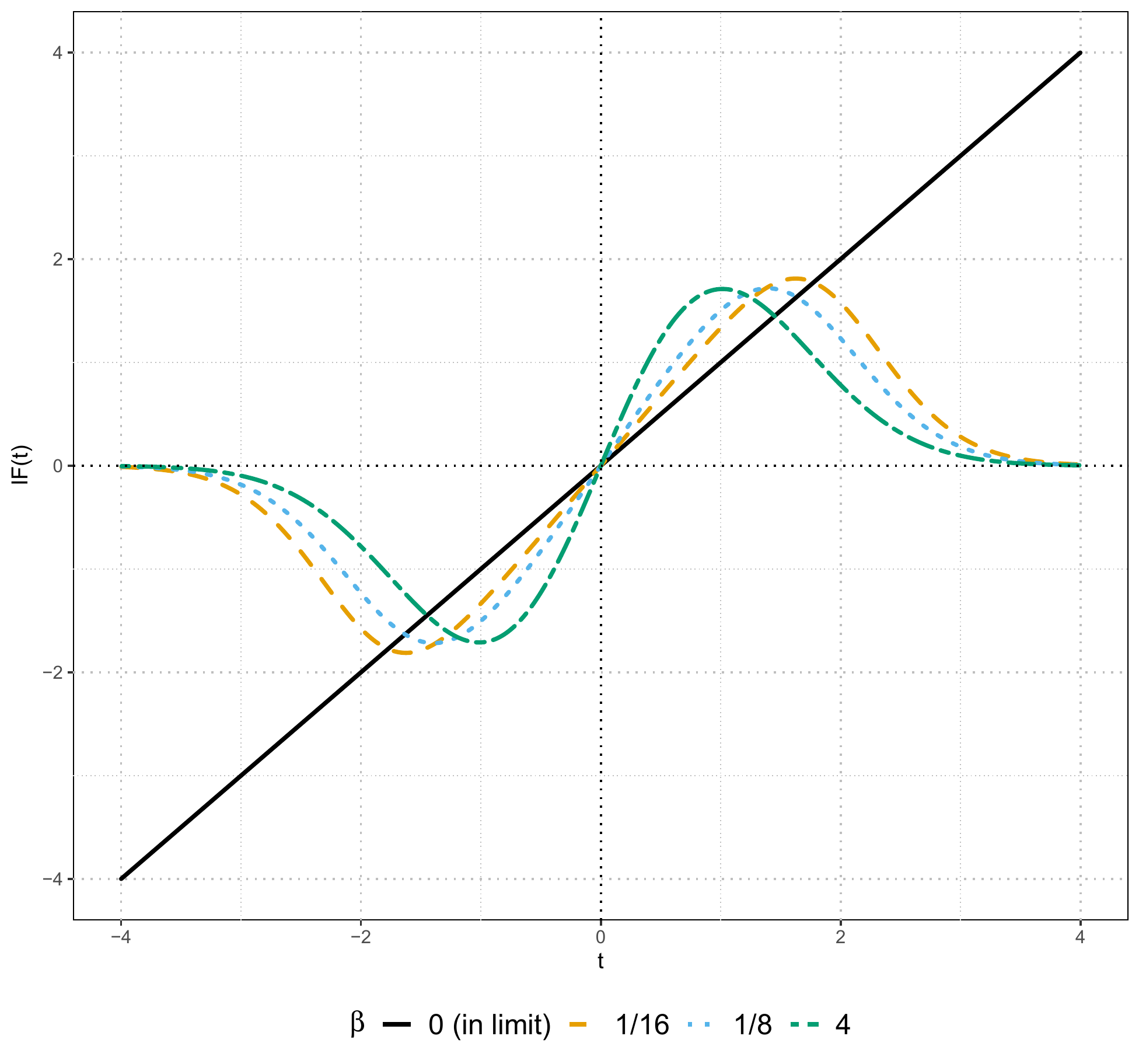}
  \caption{IFs for MEWDE($\beta$) for $\hat{\mu}$ of $N(\mu, 1)$ family at $N(0, 1)$ distribution.}\label{fig:f3}
\end{figure}

\begin{remark}
The asymptotic variance of $\sqrt n$ times the MEWDE($\beta$) can be consistently estimated in a {\em sandwich fashion} by using the above influence function, as in \cite{huber2009}. Let $K_i = u_{{\bm{\theta}}}(X_i)(1-\exp(-f_{{\bm{\theta}}}(X_i)/\beta)) - \xi({\bm{\theta}})$ and let $\hat{K_i}$ be the corresponding quantity evaluated at $\hat{{\bm{\theta}}}$, with the empirical distribution $G_{n}$ plugged in place of $G$. Let $\hat{K} = (n-1)^{-1} \sum_i (\hat{K}_{i}\hat{K}^{T}_{i})$. Similarly, we obtain $\hat{J}$ from $J$ by replacing ${\bm{\theta}}$ by $\hat{{\bm{\theta}}}$, with $G_n$ plugged in place of $G$. Then, the asymptotic variance of $\sqrt n$ MEWDE($\beta$) can be consistently estimated by $\hat{J}^{-1}\hat{K}\hat{J}^{-1}.$ Consistent estimators of the asymptotic variance of this estimator can also be obtained by the jackknife and bootstrap techniques. Again, this technique can be extended to consistently estimate the asymptotic variance of any minimum Bregman divergence based estimator for a given $B$ function. 
\end{remark} 

\section{Estimation for independent and identical data}
\label{sec:ihd}
\subsection{Introduction}
\label{iid:intro}
When the true distribution $G$ belongs to the model, i.e. $G = F_{{\bm{\theta}}}$ for some ${\bm{\theta}} \in \Omega$, the formulae for $J$, $K$ and $\xi$, in case of MEWDE($\beta$), is as follows. 
\begin{equation}
\label{eq15}
\begin{aligned}
  J &= \int  u_{{\bm{\theta}}}(x) u^{T}_{{\bm{\theta}}}(x) [1-\exp(-f_{{\bm{\theta}}}(x)/\beta)] f_{{\bm{\theta}}}(x) dx, \\
  K &= \int  u_{{\bm{\theta}}}(x) u^{T}_{{\bm{\theta}}}(x) [1-\exp(-f_{{\bm{\theta}}}(x)/\beta)]^{2} f_{{\bm{\theta}}}(x)  dx \quad - \quad \xi\xi^T, \\
  \xi &= \int u_{{\bm{\theta}}}(x) [1-\exp(-f_{{\bm{\theta}}}(x)/\beta)] f_{{\bm{\theta}}}(x) dx.\\
\end{aligned}
\end{equation}
As $\beta \rightarrow 0$, $J$ and $K$ both tend to the Fisher information matrix. We use Equation (\ref{eq15}) to compute asymptotic relative efficiencies of MEWDE($\beta$), which indicate how much efficiency is lost, relative to the maximum likelihood estimator, under the pure model. Along the lines of \cite{basu1998}, we consider examples of some specific parametric families.

\subsection{Simulation scheme}
\label{subsec:simscheme}

Here we consider different parametric families and compute the MEWDEs of the model parameters under different scenarios using simulated data and compare them with the corresponding MDPDEs. At the first stage we compute, for a fixed parametric family of densities 
$\mathscr{F}_\theta = \{f_\theta: \theta \in \Omega \subset \mathrm{R}^p\}$, the empirical mean square errors (MSEs) of the parameter estimates -- for several members of both the MDPDE and the MEWDE classes -- under pure data generated from the given parametric model.  Then we identify several sets of combinations $(\alpha_0, \beta_0)$, the tuning parameters of the two families,  for which the empirical MSEs of MDPDE($\alpha_0$) and MEWDE($\beta_0$) are approximately equal.  Subsequently we generate data from contaminated model distributions having densities of the form
$$h(x) = (1 - \epsilon) f_{\theta_0}(x) + \epsilon v(x),$$ where $\epsilon$ is the contaminating proportion, $v(x)$ is a suitable contaminating density, but $\theta_0$ is still the target parameter. Now we compare the MSEs of  MDPDE ($\alpha_0$) and MEWDE($\beta_0$), with an aim to determine which one of these two, which are close in terms of model efficiency, have better outlier stability.  Unless otherwise mentioned, we have used samples of size $n  = 200$, and for each scenario we have replicated the sample $r = 2000$ times. The finite sample relative efficiency (FSRE) of the MDPDE is defined to be the ratio of MSE(MLE) to MSE(MDPDE); similarly for the MEWDE. The relevant R codes are presented in the Online Supplement.


\subsection{Simulation study: mean of univariate normal}
\label{subsec:simNormMean}
Taking $f_{{\bm{\theta}}}$ to be the density function for $N(\mu, \sigma^2)$ with $\sigma^2$ known, using Theorem \ref{thm:thm1} and Equation (\ref{eq15}), one can compute and compare theoretical asymptotic relative efficiencies (AREs) of both the MEWDE($\beta$) and MDPDE($\alpha$) with respect to the MLE (see Table \ref{tab:tab1}).
    \begin{table}[h]
    \centering
    \caption{AREs of MDPDE and MEWDE of $\mu$ for N($\mu$, 1).}
    \label{tab:tab1}
    \begin{tabular}{ccc}
      \toprule
      Tuning par. ($k$) & ARE(MDPDE($k$)) & ARE(MEWDE($k$)) \\ \midrule
      0.001 & 1.000 & 0.996 \\
      0.004 & 1.000 & 0.987 \\
      0.016 & 1.000 & 0.955 \\
      0.062 & 0.995 & 0.867 \\
      0.250 & 0.941 & 0.741 \\
      1.000 & 0.650 & 0.676 \\
      4.000 & 0.216 & 0.656 \\
      \bottomrule
    \end{tabular}
        \end{table}
As both $\alpha$ and $\beta$ move away from zero, the efficiency of MDPDE($\alpha$) decreases slowly for a brief initial period, but then drops much more rapidly as compared to MEWDE($\beta$), as is seen in Table \ref{tab:tab1}. For a simulation-based comparison of the DPD and EWD classes, we follow the scheme outlined in Section \ref{subsec:simscheme}, where the true distribution is $N(0,1)$ and the contaminating distribution is $N(\mu_c,1)$. We have carried out simulation studies for $\mu_c = 3 \text{ and } 5$ and estimated the mean parameter under the $N(\mu, 1)$ model.  Our findings are presented in Table \ref{tab:tab2}.
\begin{table}[h]
  \centering
  \caption{FSRE's of MDPDE(denoted D($\alpha$)) and MEWDE(denoted ) with respect to MLE.  \\ \scriptsize  Figures in bold denote best FSRE in that contamination scheme.}
  \label{tab:tab2}
  \begin{tabular}{lccccccccc}
    \toprule
     & \multicolumn{4}{c}{$\mu_c = 3$} && \multicolumn{3}{c}{$\mu_c = 5$} \\
    \cline{2-5} \cline{7-9} \multicolumn{1}{c}{$\hat{\mu}$}  & $\epsilon=0$ & $\epsilon=0.05$ & $\epsilon=0.10$ & $\epsilon=0.20$ && $\epsilon=0.05$ & $\epsilon=0.10$ & $\epsilon=0.20$ \\ \midrule
MLE & $\bm{1}$ & $1$ & $1$ & $1$ && $1$ & $1$ & $1$ \\\midrule
D(0.05) & $0.996$ & $1.358$ & $1.358$ & $1.250$ && $2.635$ & $2.568$ & $2.059$ \\
E(0.001) & $0.996$ & $1.791$ & $1.806$ & $1.495$ && $12.326$ & $31.141$ & $52.727$ \\\midrule
D(0.1) & $0.956$ & $2.567$ & $3.027$ & $2.552$ && $10.966$ & $23.342$ & $29.168$ \\
E(0.004) & $0.954$ & $3.409$ & $4.863$ & $4.221$ && $\bm{13.509}$ & $\bm{43.633}$ & $\bm{140.125}$ \\\midrule
D(0.43) & $0.871$ & $3.592$ & $6.075$ & $6.213$ && $12.106$ & $38.450$ & $115.779$ \\
E(0.063) & $0.867$ & $\bm{4.003}$ & $7.947$ & $9.664$ && $12.356$ & $40.769$ & $137.303$ \\\midrule
D(0.74) & $0.749$ & $3.693$ & $8.567$ & $13.500$ && $10.495$ & $34.680$ & $117.038$ \\
E(0.25) & $0.747$ & $3.763$ & $\bm{9.075}$ & $15.557$ && $10.525$ & $34.861$ & $118.461$ \\\midrule
D(0.98) & $0.666$ & $3.428$ & $8.837$ & $17.716$ && $9.304$ & $30.871$ & $105.076$ \\
E(4) & $0.666$ & $3.430$ & $8.861$ & $17.852$ && $9.304$ & $30.871$ & $105.129$ \\\midrule
$L_2$ & $0.659$ & $3.401$ & $8.821$ & $\bm{17.977}$ && $9.206$ & $30.553$ & $104.007$ \\ \bottomrule
  \end{tabular}
\end{table}
The first column presents the $(\alpha_0, \beta_0)$ combinations used in this example, and the second column indicates how close the corresponding MSEs are. The following important observations can be made from the figures of Table 2. 
\begin{enumerate}

\item For uncontaminated data, the MLE is the most efficient estimator, as it should be. 

\item Under even a slight contamination there is a severe degradation in performance of the MLE, and the other two estimators quickly overtake it. 

\item Generally, as the proportion of contamination increases, larger tuning parameters give better performance (on account of their stronger downweighting). However, this improvement is not absolute. Generally, with increasing tuning parameter,  the performance of the estimators reach a peak at some moderate value of the tuning parameter, and thereafter drops again.  

\item If the contaminating distribution is far separated from the target distribution, smaller values of tuning parameters are sufficient to provide good outlier stability. 

\item However, the most important observation for us is that in all the pairs considered here having comparable MSEs under pure data, the MEWDE beats the MDPDE, sometimes quite soundly, under contaminated scenarios. 

\end{enumerate}

\begin{figure}[h]
  \centering
  \includegraphics[width = 0.8\textwidth]{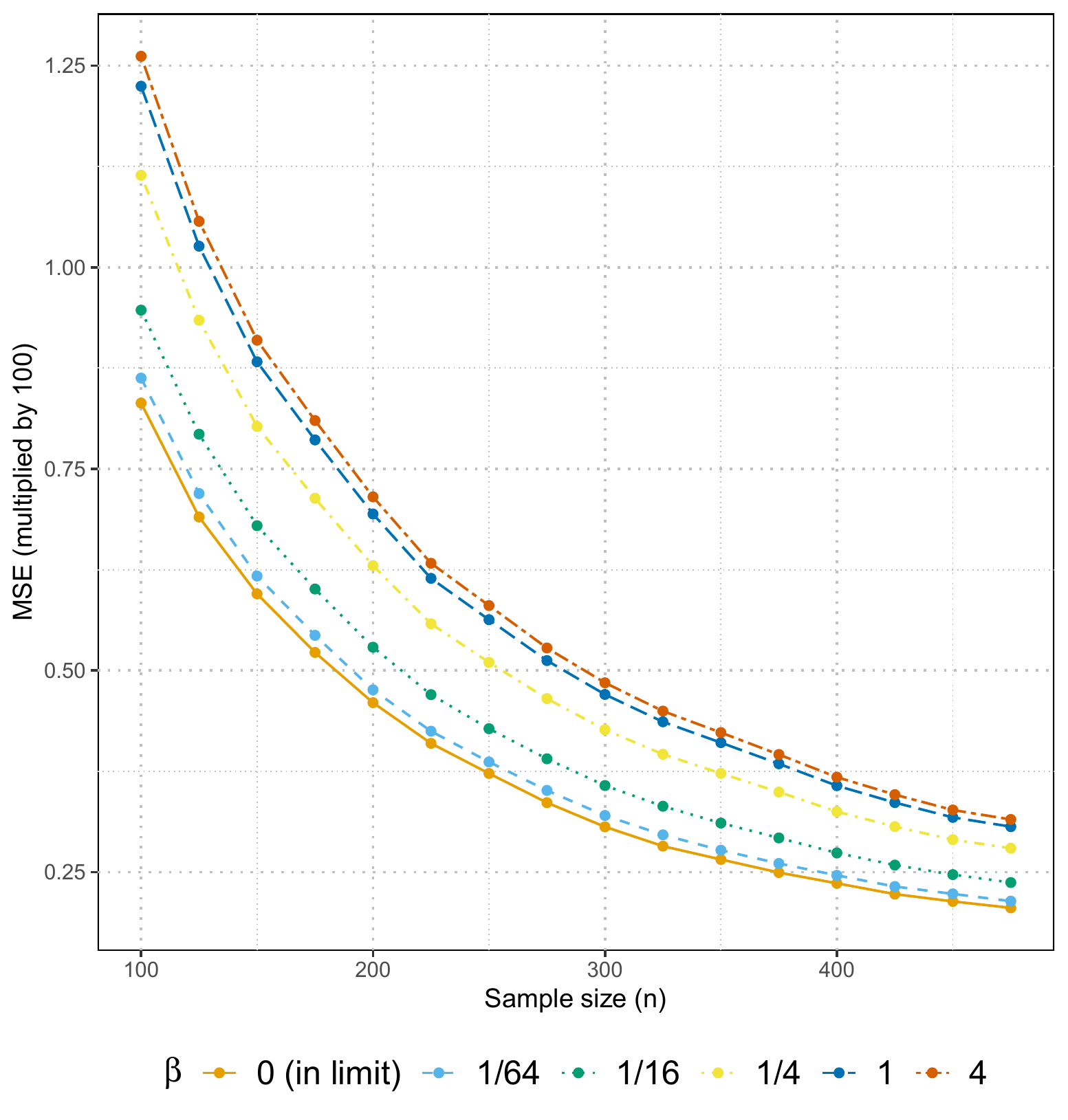}
  \caption{MSE of MEWDE($\beta$) of $\mu$ under the $N(\mu, 1)$ model for pure $N(0, 1)$ data.}\label{fig:f4}
\end{figure}

In Figures 4 and 5 we graphically present the MSEs of different members of the MEWDE($\beta$) class for the indicated pure normal data and contaminated normal data situations over a sequence of sample sizes. Figure 4 clearly shows the hierarchical relation between increasing $\beta$ and increasing MSE. In Figure 5 it may be seen that the optimal MSE is at an intermediate value of $\beta$. If we brought the contaminating mean closer, or pushed up the contaminating proportion, a higher value of $\beta$ would be required for the optimal solution.

\begin{figure}[]
  \centering
  \includegraphics[width = 0.8\textwidth]{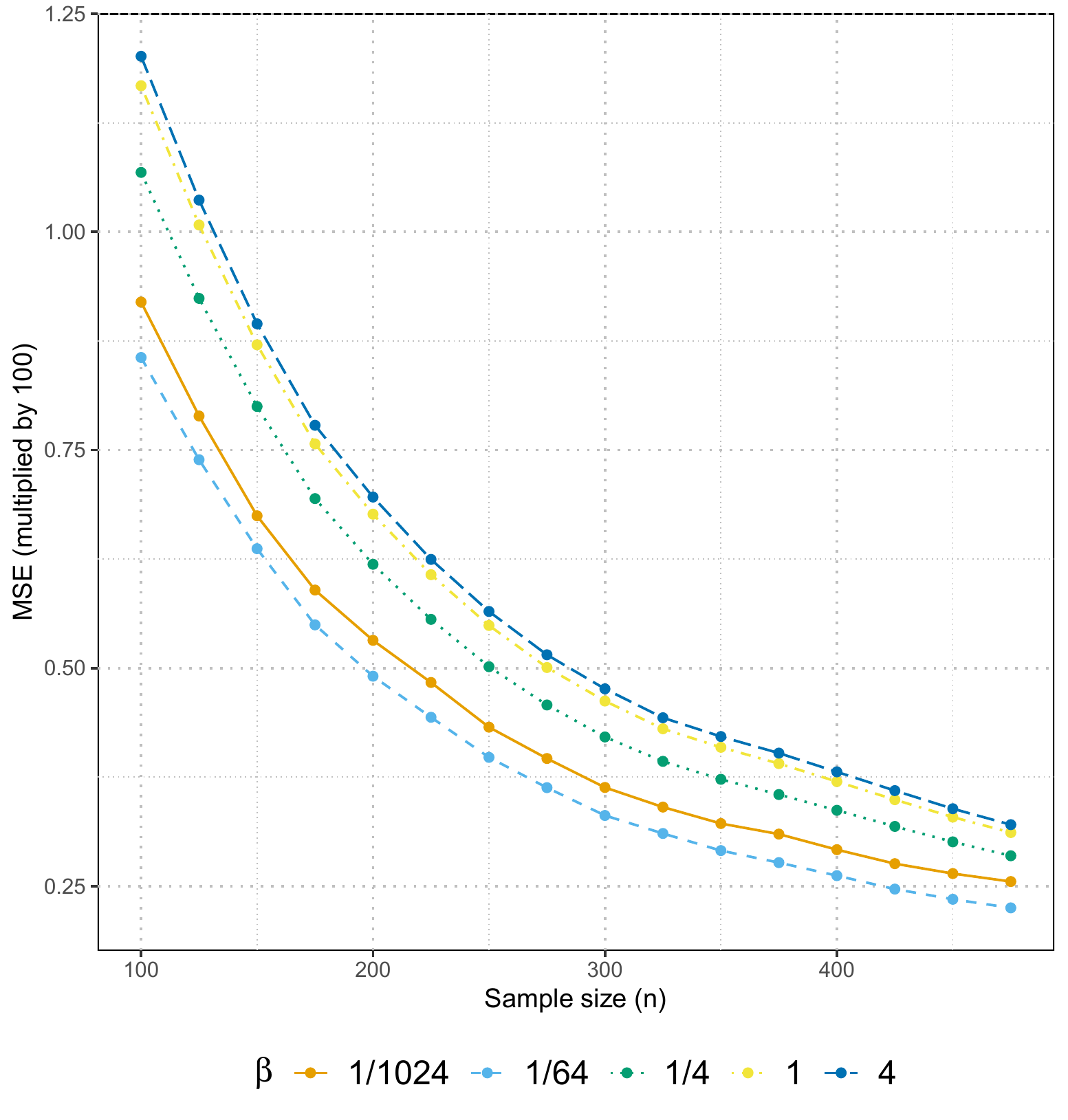}
  \caption{MSE of MEWDE($\beta$) for data from $0.95 N(0, 1) + 0.05 N(5,1)$.}\label{fig:f4}
\end{figure}

%
%

\subsection{Simulation study: standard deviation of univariate normal}
\label{subsec:simNormSD}
We compare the robustness of the competing MDPDE($\alpha$) and MEWDE($\beta$) classes in the context of estimating $\sigma$ when data come from a contaminated normal distribution given by $(1-\epsilon) N(0, 1) + \epsilon N(0, \sigma^2_{c}),$  where $\epsilon$ is the contamination proportion and $\sigma^2_{c}$ is the variance of the contaminating distribution; the model is the $N(0, \sigma^2)$ model and the target parameter is 1. The observations from the results reported in Table 3 are very similar to those for Table  2. Once again, we note that within each (efficiency-wise) equivalent pair, the MEWDE beats the MDPDE in each single case. 
\begin{table}[htp]
  \centering
  \caption{FSRE's of MDPDE(denoted D($\alpha$)) and MEWDE(denoted E($\beta$)) with respect to the MLE for the normal scale model.\\ \scriptsize  Figures in bold denote best FSRE in that contamination scheme.}
  \label{tab:tab4}
  \begin{tabular}{lccccccccc}
    \toprule
     & \multicolumn{4}{c}{$\sigma_c = 3$} && \multicolumn{3}{c}{$\sigma_c = 5$} \\
    \cline{2-5} \cline{7-9} \multicolumn{1}{c}{$\hat{\sigma}$}  & $\epsilon=0$ & $\epsilon=0.05$ & $\epsilon=0.10$ & $\epsilon=0.20$ && $\epsilon=0.05$ & $\epsilon=0.10$ & $\epsilon=0.20$ \\ \midrule
MLE & $\bm{1}$ & $1$ & $1$ & $1$ && $1$ & $1$ & $1$ \\ \midrule
D(0.098) & $0.970$ & $2.980$ & $2.476$ & $1.789$ && $10.270$ & $5.478$ & $2.356$ \\
E(0.001) & $0.971$ & $5.670$ & $5.017$ & $2.806$ && $40.979$ & $34.196$ & $10.234$ \\\midrule
D(0.177) & $0.873$ & $6.269$ & $6.338$ & $4.057$ && $38.358$ & $33.529$ & $13.105$ \\
E(0.004) & $0.872$ & $8.555$ & $10.786$ & $7.146$ && $\bm{60.167}$ & $79.181$ & $47.703$ \\\midrule
D(0.551) & $0.670$ & $7.950$ & $12.253$ & $10.299$ && $50.158$ & $72.823$ & $52.318$ \\
E(0.063) & $0.669$ & $\bm{8.658}$ & $\bm{14.871}$ & $\bm{13.873}$ && $56.044$ & $\bm{95.436}$ & $\bm{82.653}$ \\\midrule
D(0.884) & $0.550$ & $7.103$ & $12.494$ & $12.443$ && $43.073$ & $67.906$ & $55.659$ \\
E(0.5) & $0.549$ & $7.161$ & $12.772$ & $12.907$ && $43.559$ & $69.896$ & $58.191$ \\\midrule
D(0.983) & $0.526$ & $6.839$ & $12.217$ & $12.463$ && $41.136$ & $64.981$ & $53.813$ \\
E(4) & $0.526$ & $6.844$ & $12.247$ & $12.512$ && $41.167$ & $65.181$ & $54.046$ \\\midrule
$L_2$ & $0.522$ & $6.801$ & $12.164$ & $12.453$ && $40.825$ & $64.499$ & $53.474$ \\ \bottomrule
  \end{tabular}
\end{table}


\subsection{Simulation study: mean of exponential}
\label{subsec:simExp}
We compare the robustness of the competing MDPDE($\alpha$) and MEWDE($\beta$) classes in the context of estimating the mean parameter $\lambda$ of the exponential model, when data come from a contaminated distribution given by $(1-\epsilon) E(1) + \epsilon E(\lambda_c),$  where $\epsilon$ is the contamination proportion and $E(\lambda)$ denotes an exponential distribution with mean $\lambda$. Here $\lambda_c$ is the mean of the contaminating exponential distribution and the target parameter value is $\lambda$ (fixed at $1$). 
\begin{table}[htp]
  \centering
  \caption{FSRE's of MDPDE(denoted D($\alpha$)) and MEWDE(denoted E($\beta$)) with respect to the MLE for the exponential model.\\ \scriptsize  Figures in bold denote best FSRE in that contamination scheme.}
  \label{tab:tab9}
  \begin{tabular}{lccccccccc}
    \toprule
     & \multicolumn{4}{c}{$\lambda_c = 3$} && \multicolumn{3}{c}{$\lambda_c = 5$} \\
    \cline{2-5} \cline{7-9} \multicolumn{1}{c}{$\hat{\lambda}$}  & $\epsilon=0$ & $\epsilon=0.05$ & $\epsilon=0.10$ & $\epsilon=0.20$ && $\epsilon=0.05$ & $\epsilon=0.10$ & $\epsilon=0.20$ \\ \midrule
MLE & $\bm{1}$ & $1$ & $1$ & $1$ && $1$ & $1$ & $1$ \\ \midrule
D(0.153) & $0.904$ & $1.592$ & $1.858$ & $1.771$ && $3.217$ & $3.438$ & $2.746$ \\
E(0.004) & $0.905$ & $1.766$ & $2.230$ & $2.101$ && $4.652$ & $6.002$ & $4.658$ \\\midrule
D(0.440) & $0.696$ & $1.753$ & $2.774$ & $3.099$ && $4.810$ & $7.896$ & $7.755$ \\
E(0.063) & $0.694$ & $\bm{1.824}$ & $\bm{3.112}$ & $3.655$ && $\bm{5.321}$ & $\bm{10.115}$ & $\bm{11.055}$ \\\midrule
D(0.844) & $0.525$ & $1.496$ & $2.752$ & $3.617$ && $4.237$ & $8.236$ & $9.793$ \\
E(1.000) & $0.525$ & $1.498$ & $2.766$ & $\bm{3.656}$ && $4.245$ & $8.282$ & $9.924$ \\\midrule
D(0.989) & $0.492$ & $1.420$ & $2.669$ & $3.615$ && $4.022$ & $7.958$ & $9.733$ \\
E(16.00) & $0.492$ & $1.420$ & $2.669$ & $3.616$ && $4.022$ & $7.958$ & $9.734$ \\\midrule
$L_2$ & $0.490$ & $1.414$ & $2.662$ & $3.613$ && $4.006$ & $7.935$ & $9.723$ \\\bottomrule
  \end{tabular}
\end{table}
Again the observations are similar to those of Tables 2 and Table 3. Once again the MEWDE is equivalent or better than the MDPDE in each single case.

\subsection{Modeling real life data: Shoshoni rectangles}
\label{shoshoni}
Data on Shoshoni rectangles presented and analyzed by \cite{hettmansperger2010} are studied here. Data on twenty width to length ratios of beaded rectangles found in baskets used by Shoshonis are given in Table \ref{tab:t5}.
\begin{table}[h]
  \centering
   \caption{Width to Length Ratios of Rectangles}\label{tab:t5}
   \begin{tabular}{|cccccccccc|}
     \hline
     0.553 & 0.570& 0.576& 0.601& 0.606& 0.606& 0.609 &0.611 &0.615& 0.628 \\
     0.654 &0.662& 0.668 &0.670 &0.672 &0.690 &0.693 &0.749 &0.844& 0.933 \\
     \hline
   \end{tabular}
\end{table}
First, we examine the histogram of the observations (see Figure \ref{fig:f5}) --- it is seen that 3 observations (colored in red) are well-separated from the `main body' made up of the remaining observations. We note from Figure \ref{fig:f6} that the Q-Q plot generated by all twenty observations (suitably centered and scaled) gives us more evidence to claim that the 3 largest observations are (possibly) outliers. On the other hand, by studying the Q-Q plot generated from the `outlier deleted' dataset (again  suitably centered and scaled), we are motivated to model the dataset using a normal distribution with  unknown mean $\mu$ and standard deviation $\sigma$ (i.e., $\bm{\theta} = (\mu, \sigma)^T$). 

\begin{center}
    \begin{figure}[htp]
    \captionsetup{justification=centering}
  \centering
  \includegraphics[width=0.8\textwidth]{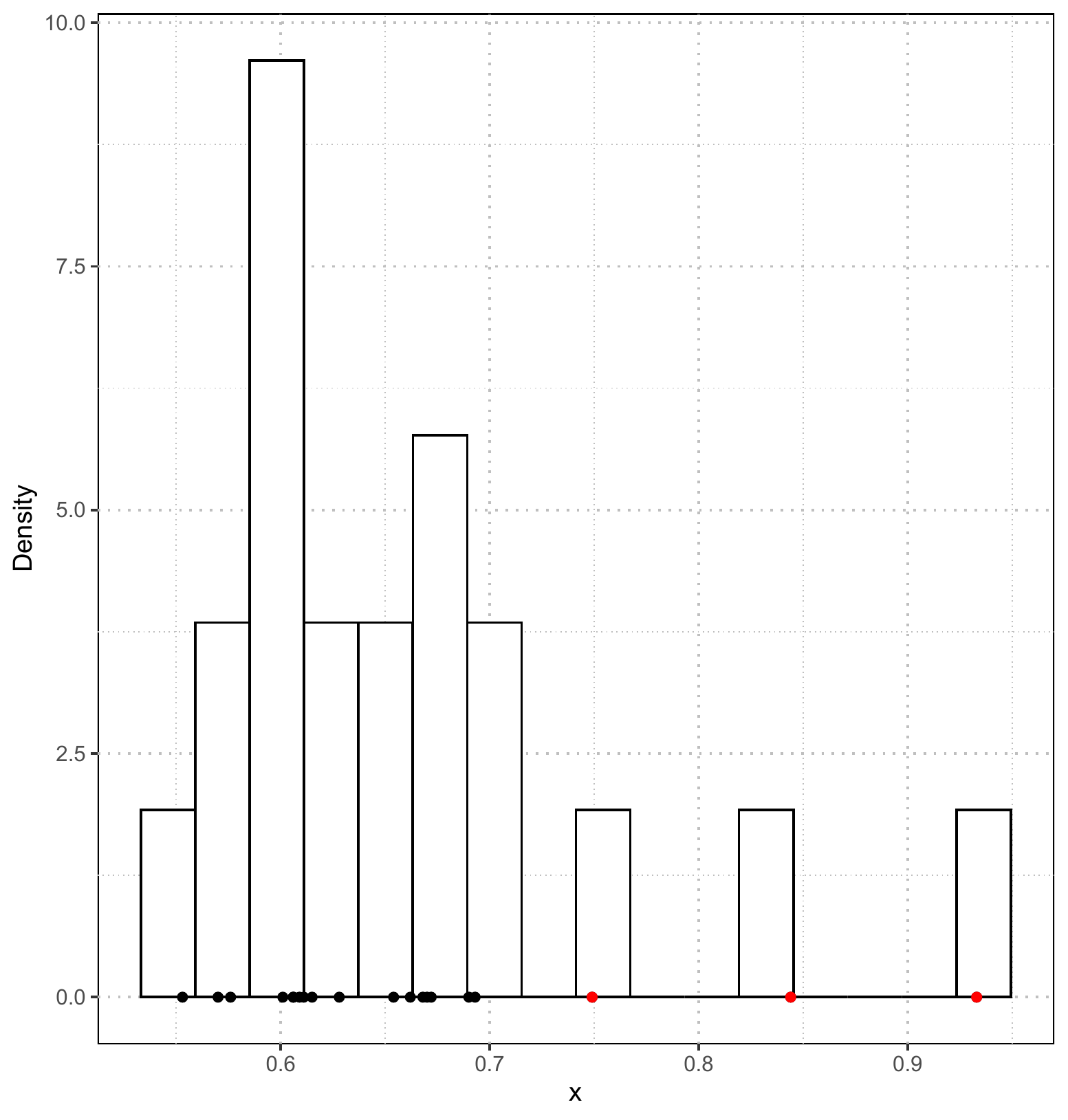}
  \caption{Histogram of values presented in Table \ref{tab:t5} with (suspect) outliers colored red.}\label{fig:f5}
\end{figure}
\end{center}
Based on the kernel based estimate of the density, we observe there is a mildly bimodal structure present in the main body of the dataset.
\begin{figure}[htp]
  \centering
  \includegraphics[width=\textwidth]{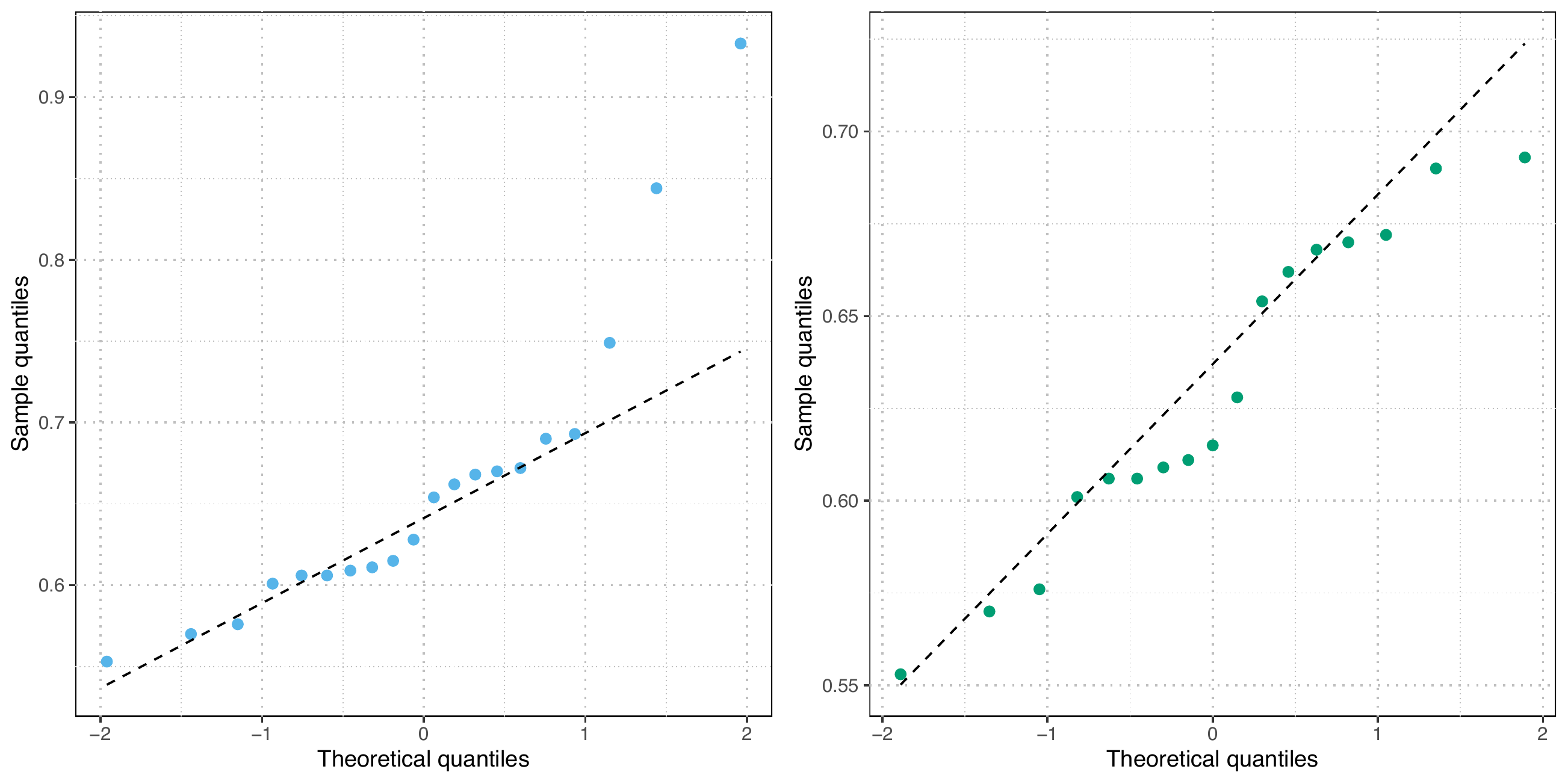}
  \caption{(L): Q-Q plot for complete data. (R): Q-Q plot for outlier deleted data.}\label{fig:f6}
\end{figure}
However, the Shapiro-Wilk test (Shapiro and Wilk (1965)), applied to the outlier deleted data, fails to reject the the null hypothesis that the outlier-deleted data are generated by a normal distribution. Indicating the full data maximum likelihood estimates by ML and the outlier deleted ones by ML+D, we get, under the normal model, $\hat{\mu}_{ML} = 0.660$ and $\hat{\sigma}_{ML} =0.093$; the outlier deleted estimates are
$\hat{\mu}_{ML+D} = 0.628$ and $\hat{\sigma}_{ML+D} =0.043$.  
Thus the outliers have a moderate effect on the mean, but a substantial effect on the scale parameter. However all the different tuning parameters for    DPD and EWD used in this case produce stable estimators and outlier resistant fits to the full data. As the outliers are quite distant from the majority of the data, small values of tuning parameters appear to sufficient in either case. 

\begin{figure}[htp]
  \centering
  \includegraphics[height=0.425\textheight, width=\textwidth]{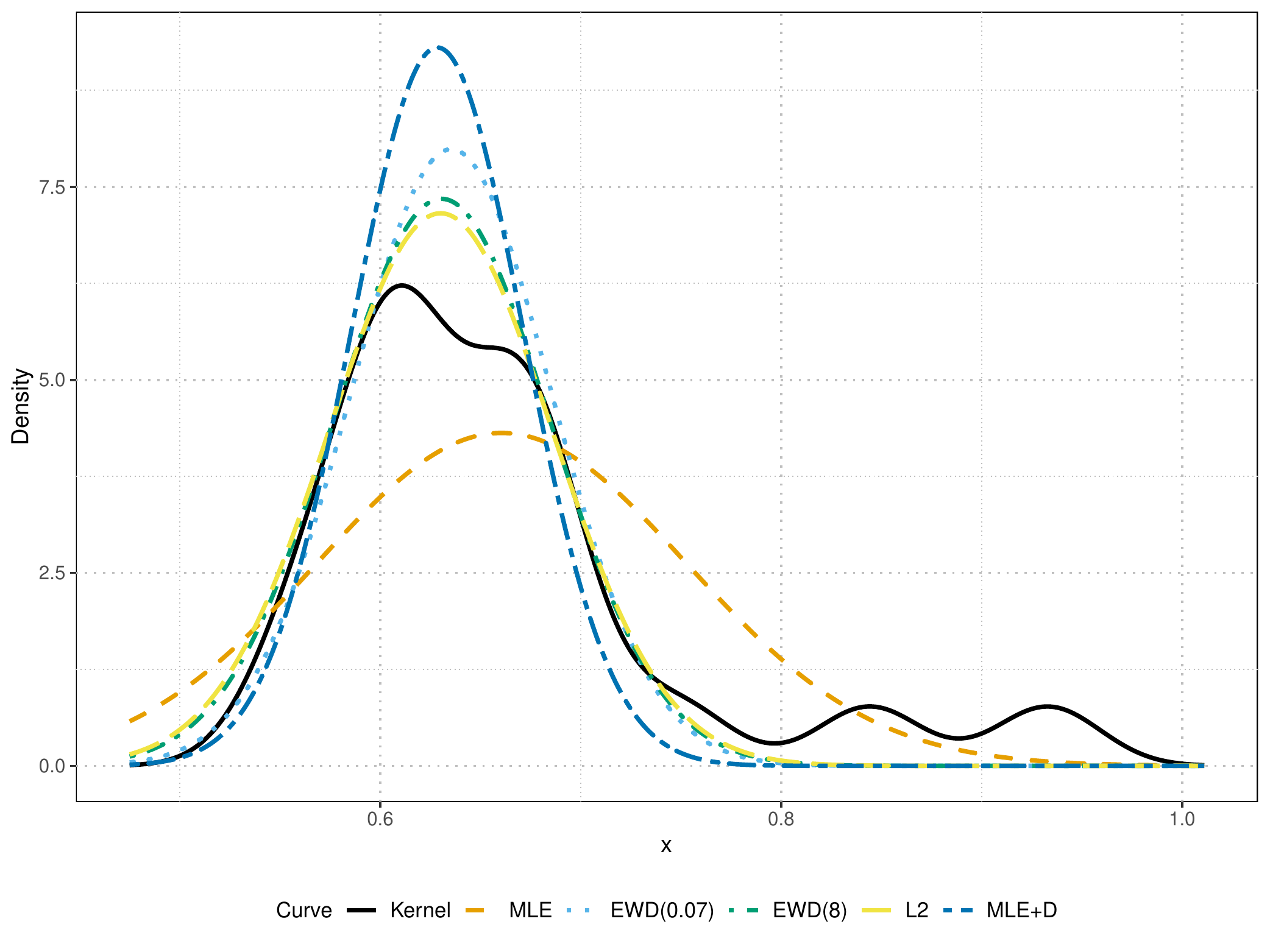}
  \caption{Density estimates for Shoshoni rectangles using MEWDEs.}\label{fig:f7}
\end{figure}

\begin{figure}[htp]
  \centering
  \includegraphics[height=0.425\textheight, width=\textwidth]{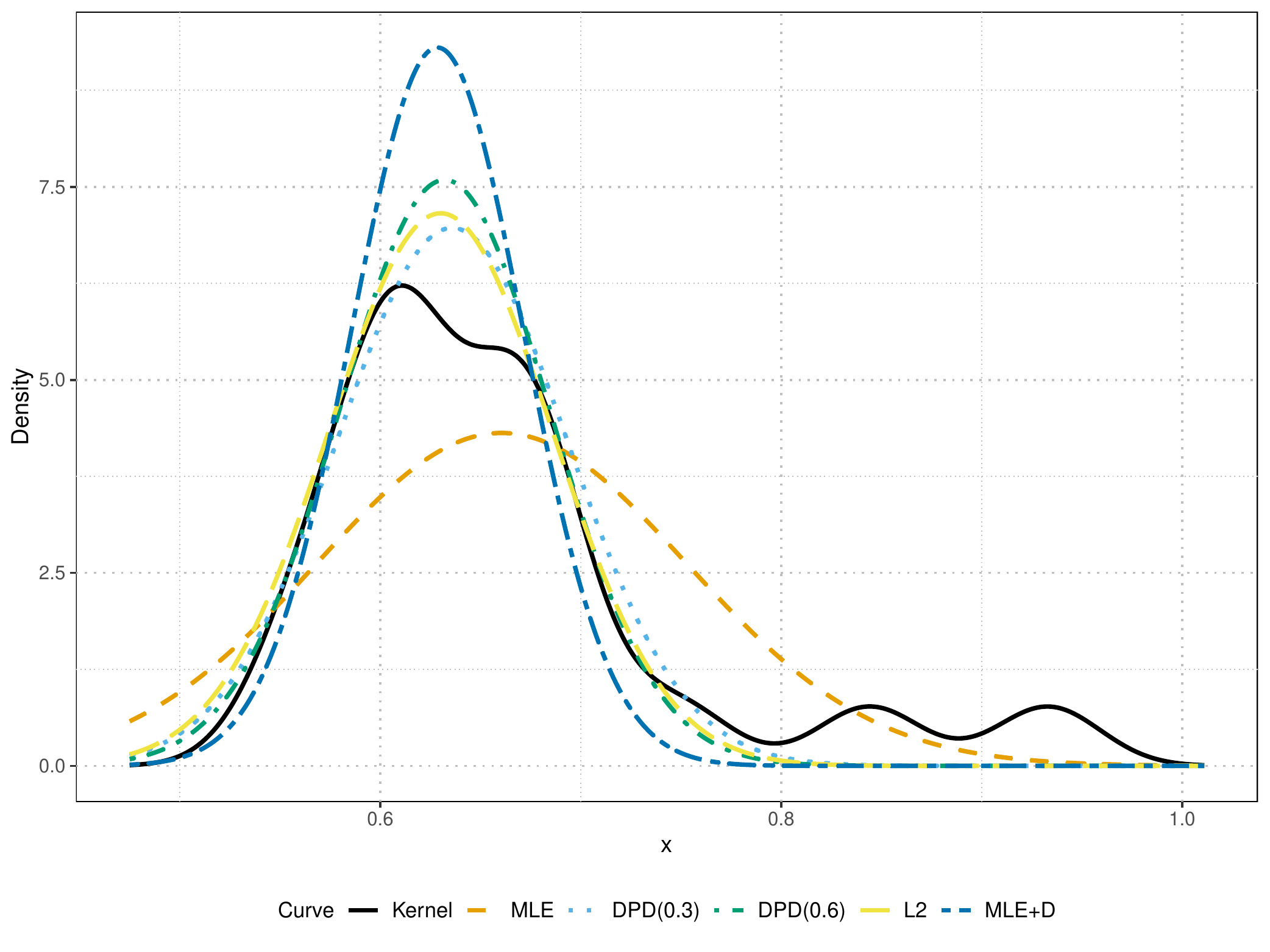}
  \caption{Density estimates for Shoshoni rectangles using MDPDEs.}\label{fig:f8}
\end{figure}

\subsection{Modeling real life data: Drosophila data}
\label{drosophila}
For data generally well modeled by the Poisson distribution, we choose to compare the performance of the MEWDE and MDPDE in the context of data on fruit flies (see \citep{woodruff1984chemical}).

\begin{table}[h]
\centering
\caption{Fitted frequencies for data using MLE, MDPDE(D($\alpha$)) and MEWDE(E($\beta$)).}
\label{tab:tab6}
\begin{tabular}{lccccccc}
\toprule
Count & 0 &1 & 2&3& 4 & $\geq 5$ & $\hat{\lambda}$ \\

Observed & 23 & 7 & 3 & 0 &0 & 1 (91) & -- \\
\midrule
MLE. & $1.596$ & $4.882$ & $7.467$ & $7.613$ & $5.822$ & $6.620$ & $3.059$ \\
\midrule
D($0.10$) & $22.981$ & $9.002$ & $1.763$ & $0.230$ & $0.023$ & $0.002$ & $0.392$ \\
D($0.50$) & $23.375$ & $8.759$ & $1.641$ & $0.205$ & $0.019$ & $0.002$ & $0.375$ \\
D($0.75$) & $23.549$ & $8.649$ & $1.588$ & $0.194$ & $0.018$ & $0.001$ & $0.367$ \\
\midrule
E($0.001$) &$22.894$ & $9.055$ & $1.791$ & $0.236$ & $0.023$ & $0.002$ & $0.396$ \\
E($0.02$)& $22.614$ & $9.222$ & $1.880$ & $0.256$ & $0.026$ & $0.002$ & $0.408$ \\
E($0.25$)& $23.712$ & $8.545$ & $1.540$ & $0.185$ & $0.017$ & $0.001$ & $0.360$ \\
\midrule
$L_2$ & $23.609$ & $8.611$ & $1.570$ & $0.191$ & $0.017$ & $0.001$ & $0.365$ \\
\midrule
MLE + D  & $22.93$ & $9.03$ & $1.78$ & $0.23$ &$0.02$ & $0$ & $0.39$ \\
\bottomrule
\end{tabular}
\end{table}

In this experiment male flies were sprayed with a certain level of a chemical to be screened, and then made to mate with unexposed females. The response, for each father fly, was the number of daughter flies having a recessive lethal mutation in the X-chromosome. The frequencies of these responses (presented in the first row of Table \ref{tab:tab6}) are modeled as Poisson variables, and the estimates of the Poisson mean parameter $\lambda$ (as well as the estimated frequencies), using several members of the DPD and EWD families, the MLE and the MLE+D (outlier deleted MLE) are presented in Table 6. The single extreme value at 91 is treated as the obvious outlier. Both set of estimators have comparable (satisfactory) performance. 


\subsection{Tuning parameter selection}
\label{ihd:tune}
It is clear that in doing estimation using the EWD, small values of $\beta$ provide greater model efficiency, while large values of $\beta$ provide greater outlier stability and protection against small model violations. Given any real data set we must choose the ``optimal'', data-based  tuning parameter $\beta$ so that the procedure has the right amount of balance as is necessary for the data set in  question. Here we follow the approach of \cite{warwick2005} to derive the optimal estimate of the tuning parameter. This approach constructs an empirical estimate of the mean square error as a function of the tuning parameter (and a pilot estimator). The empirical estimate of the mean square error MSE$_\beta$, as a function of the tuning parameter $\beta$ and a pilot estimator ${\bm{\theta}}^P$ is given by
\begin{equation*}
  \widehat{MSE_\beta}({\bm{\theta}}^P)=\left(\widehat{{\bm{\theta}}}_{\beta}-{\bm{\theta}}^{P}\right)^{T} \left(\widehat{{\bm{\theta}}}_{\beta}-{\bm{\theta}}^{P}\right)+ n^{-1} \text{tr}\left(J_{\beta}^{-1}\left(\widehat{{\bm{\theta}}}_{\beta}\right) K_{\beta}\left(\widehat{{\bm{\theta}}}_{\beta}\right) J_{ \beta}^{-1}\left(\widehat{{\bm{\theta}}}_{\beta}\right)\right),
\end{equation*}
where $J$ and $K$ are the terms defined in Equation (\ref{eq15}), $\widehat{\bm{\theta}}_\beta$ is the MEWDE($\beta$) and tr($\cdot$) denotes the trace of a matrix. By minimizing this objective function over the tuning parameter, we get a data driven `optimal' estimate of the tuning parameter. \cite{warwick2005} propose the minimum $L_2$ estimator as the pilot estimator in the above calculation, as it has strong robustness properties. 

For the data on Shoshoni rectangles presented in Section \ref{shoshoni}, we implement the tuning parameter selection algorithm detailed above. The normal distribution with unknown mean and standard deviation parameters is used to model this data. The optimal tuning parameter is found to be $\beta_{OPT} = 0.43$, and the associated estimated parameters are $\hat{\mu} = 0.63$ and $\hat{\sigma} = 0.05$. The corresponding (sample size-scaled) asymptotic mean-squared error is $5.07 \times 10^{-3}$.

A similar exercise is carried out using the Poisson distribution to model the data on Drosophila fruit flies presented in Section \ref{drosophila}. The optimal tuning parameter is found to be $\beta_{OPT} = 0.08$, and the estimated mean parameter is given by $\hat{\lambda} = 0.377$. The corresponding (sample size-scaled) asymptotic mean-squared error is $0.46$. 

See \citep{basak2020optimal} for some other approaches to tuning parameter selection. 

\section{Estimation for independent and non-homogeneous data}
\subsection{Introduction}
\label{inhd:intro}
In this section, going beyond the i.i.d. situation, we extend our method to the case of data which are independent and share common parameters in their distribution but are not identically distributed.  \cite{ghosh2013} refer to such data as independent and non-homogeneous observations and we adhere to that nomenclature. Exploiting the robustness of our minimum distance procedure, we develop a general estimation method for handling such data.  We establish the asymptotic properties of the proposed estimator, and illustrate the benefits of our method in case of linear regression.

We assume that our observed data $Y_1,\ldots, Y_n$ are independent. For $i=1, 2, \ldots, n$ we have $Y_i \sim g_i$, where $g_i$ are possibly different densities with respect to some common dominating measure. We want to model $g_i$ by the family $\mathscr{F}_{i,{\bm{\theta}}} = \{F_i(\cdot; {\bm{\theta}}) \mid {\bm{\theta}} \in \Omega\}$ for each $i = 1, 2,\ldots, n.$ An estimate of the Bregman divergence between the density corresponding to the $i$-th data point and the associated model density given by
\begin{equation*}
  d_{B}(\hat{g}_i(\cdot), f_{i}(\cdot; {\bm{\theta}})).
\end{equation*}
Since our aim is to reach some `common' value of ${\bm{\theta}}$ (if it exists) which can be used to model each $g_i$ individually, it is  intuitive to minimize the average divergence between the data points and the models. Consequently, we minimize 
      \begin{equation*}
        n^{-1} \sum_{i=1}^n d_{B}(\hat{g}_i(\cdot), f_{i}(\cdot; {\bm{\theta}}))
      \end{equation*}
with respect to ${\bm{\theta}}$, where $\hat{g}_i$ is a non-parametric density estimate of $g$. 
%
As in the approach suggested by \cite{ghosh2013}, in presence of only one data point $Y_i$ from density $g_i$, the best density estimate of $g_i$ is taken to be the (degenerate) density which puts the entire mass on $Y_i$. Consequently, our objective function becomes $H_{n}({\bm{\theta}})= n^{-1} \sum_{i=1}^n V_{i}(Y_i, {\bm{\theta}})$, which can be simplified as
\begin{equation}
\label{eq16}
n^{-1} \sum_{i=1}^n \Big[ \int \Big\{ f_{i}(y; {\bm{\theta}}) B'(f_{i}(y; {\bm{\theta}})) - B(f_{i}(y; {\bm{\theta}})) \Big\} dy - B'(f_{i}(Y_i; {\bm{\theta}}))\Big].
\end{equation}
In case of the MEWDE($\beta$), the $B$ function is given by Equation (\ref{eq6}). Considering partial derivatives of Equation (\ref{eq16}) with respect to ${\bm{\theta}}$, we arrive at the estimating equation $\nabla_{\bm{\theta}} \sum_{i=1}^n  V_{i}(Y_i, {\bm{\theta}})  = 0$, which can be rewritten as
\begin{equation}
\label{eq17}
\sum_{i=1}^n \Big[ u_{i}(Y_i)w(f_{i}(Y_i; {\bm{\theta}})) - \int \Big\{ u_{i}(t)w(f_{i}(t; {\bm{\theta}}))f_{i}(t; {\bm{\theta}})dt \Big\} \Big] = 0,
\end{equation}
where $u_i(x) = \nabla_{{\bm{{\bm{\theta}}}}} \log(f_i(x, \bm{\theta}))$ is the likelihood score function of the density $f_i(x,\bm{\theta})$ used to model the $i$-th data point, and $w(t)=B''(t) \times t$. For MEWDE($\beta$),  $w(t)=1 - \exp(-t/\beta).$ We note that as $\beta \rightarrow 0$, the corresponding objective function becomes
\begin{equation*}
  \sum_{i=1}^n [-\log (f_{i}(Y_i, {\bm{\theta}}))],
\end{equation*}
and the associated estimating equation becomes
\begin{equation*}
  \sum_{i=1}^n u_i(Y_i, {\bm{\theta}}) = 0.
\end{equation*}
We arrive at the fact that the objective function given by Equation (\ref{eq16}) and estimating equation given by Equation (\ref{eq17}) are simple generalizations of the maximum likelihood score equation for independent and non-homogeneous data.

\begin{remark}
In terms of statistical functionals, the minimum Bregman divergence based
functional $T_B(G_1, . . . , G_n)$ for non-homogeneous observations is given by the
relation
\begin{equation*}
 T_B(G_1, . . . , G_n) = \underset{{\bm{\theta}} \in \Omega}{\mathrm{argmin}}\ n^{-1} \sum_{i=1}^n d_{B}({g}_i(\cdot), f_{i}(\cdot; {\bm{\theta}})).
\end{equation*}
Since we have already established that the Bregman divergence is a genuine divergence (in the sense that it is non-negative and attains its minimum if and only if the two arguments are identical), it follows that the functional $T_B(G_1, . . . , G_n)$ is Fisher consistent under the assumption of the identifiability of the model.
\end{remark}

\subsection{Asymptotic properties.}
\label{inhd:asymp}
We derive the asymptotic distribution of the minimum exponentially weighted divergence estimator $\hat{{\bm{\theta}}}_n$ defined by the relation
\begin{equation*}
\hat{{\bm{\theta}}}_n = \underset{{\bm{\theta}} \in \Omega}{\mathrm{argmin}}\ H_{n}({\bm{\theta}})
\end{equation*}
provided such a minimum exists, where $H_n(\theta)$ is as defined in Equation (\ref{eq16}). We will be working under the framework as discussed in Section \ref{inhd:intro}.  We
also assume that there exists a best fitting parameter of ${\bm{\theta}}$ which is independent of the index $i$ of the different densities and let us denote it by ${\bm{\theta}}_g$. It is important to note that this assumption is satisfied if all the true densities $g_i$ belong to the model family so that $g_i = f_i(\cdot; {\bm{\theta}})$ for some common ${\bm{\theta}}_0$, and in that case the best fitting parameter is that true parameter ${\bm{\theta}}_0$. We know that the minimum Bregman divergence based estimator $\hat{{\bm{\theta}}}_n$ is obtained as a solution of the estimating equation given by Equation  (\ref{eq17}); as per our definition, this equation is satisfied by the minimizer of $H_n({\bm{\theta}})$ as defined in Equation (\ref{eq16}). We now define, for $i= 1, 2, \ldots$
\begin{equation}
\label{eq18}
H^{(i)}({\bm{\theta}}) = \int \Big\{ f_{i}(y; {\bm{\theta}}) B'(f_{i}(y; {\bm{\theta}})) - B(f_{i}(y; {\bm{\theta}})) \Big\} dy - \int \Big\{ B'(f_{i}(y; {\bm{\theta}})) g_{i}(y) \Big\} dy,
\end{equation}
so that at the best fitting parameter (i.e., our target parameter value ${\bm{\theta}}_g$), we have
\begin{equation*}
\nabla H^{(i)}({\bm{\theta}}_g) = 0, \quad i=1, 2, \ldots
\end{equation*}
We also define, for each $i=1, 2, \ldots$, the $s \times s$ matrix $J^{(i)}$ whose $(k, l)$-th entry is given by
\begin{equation}
\label{eq19}
J_{kl}^{(i)} = E_{g_{i}}[\nabla_{kl} V_i (Y; {\bm{\theta}})],
\end{equation}
where $\nabla_{kl} V_i (Y; {\bm{\theta}}) = \frac{\partial^2 V_i (Y; {\bm{\theta}})}{\partial \theta_k \partial \theta_l}$ and $E_{g_i}(\cdot)$ denotes taking expectation under the distribution specified by $g_i$. We also define
\begin{equation}
\label{eq20}
\Psi_{n} = n^{-1} \sum_{i=1}^{n} J^{(i)},
\end{equation}
\begin{equation}
\label{eq21}
\Omega_{n} = n^{-1} \sum_{i=1}^{n} V_{g_i}[\nabla V_i (Y; {\bm{\theta}})].
\end{equation}
The matrix defined in Equation (\ref{eq20}) has the expression
\begin{equation}
\label{eq22}
\begin{aligned}
J^{(i)} &= \int u_{i}(y, {\bm{\theta}}_g) u_{i}^T(y, {\bm{\theta}}_g) w[f_{i}(y; {\bm{\theta}})] f_{i}(y; {\bm{\theta}}) dy \\
        &\quad + \int [ -\nabla u_{i}(y, {\bm{\theta}}_g)  -  u_{i}(y, {\bm{\theta}}_g)u_{i}^T(y, {\bm{\theta}}_g)  h_i(x)] \times \\
        &\quad \quad \quad \quad (g_i(x) - f_{i}(y; {\bm{\theta}}))  w[f_{i}(y; {\bm{\theta}})] dy
\end{aligned}
\end{equation}
where $w(t) = B''(t) \times t$, $w'(t) = B''(t) + B'''(t) \cdot t$ and $$h_i(t) = \frac{w'(f_{i}(t; {\bm{\theta}}))f_{i}(t; {\bm{\theta}})}{w(f_{i}(t; {\bm{\theta}}))}.$$ Similarly, the matrix defined in Equation (\ref{eq21}), has the expression
\begin{equation}
\label{eq23}
\begin{aligned}
\Omega_{n} &= \frac{1}{n} \sum_{i=1}^{n} \Big[ \int u_{i}(y, {\bm{\theta}}_g) u_{i}^T(y, {\bm{\theta}}_g) w^2[f_{i}(y; {\bm{\theta}})] dG_i(y) - \xi_i \xi_i^T \Big]   
\end{aligned}
\end{equation}
where $\xi_i  = \int u_{i}(y, {\bm{\theta}}_g)w[f_{i}(y; {\bm{\theta}})]  dG_i(y).$ As in the case for i.i.d. data, we will make the following assumptions to establish the asymptotic properties of the minimum EWD estimators. These are analogous to the assumptions given in \cite{ghosh2013}; appropriate generalizations have been made to serve the entire Bregman divergence family. 
\begin{enumerate}
  \item[(B1)] The support $\chi = \{y \mid f_i(y; {\bm{\theta}}) > 0\}$ is independent of $i$ and ${\bm{\theta}}$ for all $i$; the
true distributions $G_i$ are also supported on $\chi$ for all $i$.
    \item[(B2)] There is an open subset $\omega$ of the parameter space $\Omega$, containing the
best fitting parameter ${\bm{\theta}}_g$ such that for almost all $y \in \chi$, and all ${\bm{\theta}} \in \Omega$,
all $i = 1, 2, \ldots$, the density $f_i(y; {\bm{\theta}})$ is thrice differentiable with respect to
${\bm{\theta}}$ and the third partial derivatives are continuous with respect to ${\bm{\theta}}$.
\item[(B3)] For $i = 1, 2, \ldots$, the integrals $\int [f_{i}(y; {\bm{\theta}}) B'(f_{i}(y; {\bm{\theta}})) - B(f_{i}(y; {\bm{\theta}})) ] dy$ and  $\int [B'(f_{i}(y; {\bm{\theta}})) g_{i}(y) ] dy$ can be differentiated thrice with respect to ${\bm{\theta}}$, and the derivatives can be taken under the integral sign.
\item[(B4)] For each $i = 1, 2, \ldots$, the matrices $J^{(i)}$ are positive definite and
\begin{equation*}
  \lambda_0 = \underset{n}{\inf}[\text{min eigenvalue of}\ \Psi_n] > 0
\end{equation*}
\item[(B5)] There exists a function $M^{(i)}_{jkl}(y)$ such that
\begin{equation*}
\begin{aligned}
&\mid \nabla_{jkl} V_{i}(y; {\bm{\theta}})\mid \leq M_{jkl}^{(i)}(y) \quad \forall \ {\bm{\theta}} \in \Omega, \quad \forall i = 1, 2, \ldots \\
\text{where}\quad &\frac{1}{n}\sum_{i=1}^{n} E_{q_i}\big[ M_{jkl}^{(i)}(Y) \big] = O(1) \quad \forall j, k, l.
\end{aligned}
\end{equation*}
\item[(B6)] For all $j, k$ we have
\begin{equation*}
\begin{aligned}
\underset{N \rightarrow \infty}{\lim} \underset{n}{\sup} \quad \Big\{ \frac{1}{n} \sum_{i=1}^{n} E_{g_i}\big[ \mid \nabla_{j} V_{i}(Y; {\bm{\theta}})  \mid  I (\mid \nabla_{j} V_{i}(Y; {\bm{\theta}})  \mid > N)\big]\Big\} = 0.
\end{aligned}
\end{equation*}

\begin{equation*}
\begin{aligned}
\underset{N \rightarrow \infty}{\lim} \underset{n}{\sup} \quad \Big\{ \frac{1}{n} \sum_{i=1}^{n}
E_{g_i}\Big[ & \big( \mid \nabla_{jk} V_{i}(Y; {\bm{\theta}})  - E_{g_i}( \nabla_{jk} V_{i}(Y; {\bm{\theta}})) \mid \big)  \\
& \times I \big( \mid \nabla_{jk} V_{i}(Y; {\bm{\theta}})  - E_{g_i}( \nabla_{jk} V_{i}(Y; {\bm{\theta}})) \mid \   > N \big) \Big]\Big\} = 0.
\end{aligned}
\end{equation*}

\item[(B7)] For all $\epsilon > 0$, we have
\begin{equation*}
\underset{N \rightarrow \infty}{\lim} \Big\{ \sum_{i=1}^{n}
E_{g_i} \Big[   \mid\mid   \Omega_{n}^{-1/2} \nabla V_{i}(Y; {\bm{\theta}}) \mid\mid^2 \  I (\mid\mid \Omega_{n}^{-1/2} \nabla V_{i}(Y; {\bm{\theta}}) \mid\mid  \ > \ \epsilon \sqrt(n)\Big]\Big\} = 0.
\end{equation*}
\end{enumerate}

\begin{thm}
If assumptions (B1)–(B7) hold, the following results are true.
 \begin{enumerate}
  \item There exists a consistent sequence ${\bm{\theta}}_n$ of roots satisfying the minimum Bregman divergence estimating equation given by Equation (\ref{eq17}).
  \item The asymptotic distribution of $\Omega_{n}^{-1/2}\Psi_n [\sqrt{n}({\bm{\theta}}_n - {\bm{\theta}}_g)]$ is s-dimensional normal with (vector) mean 0 and covariance matrix $I_s$, the s-dimensional identity matrix.
\end{enumerate}
\label{thm:thm2}
\end{thm}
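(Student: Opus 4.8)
The plan is to follow the classical M-estimation route, adapted to triangular arrays of independent but non-identically distributed summands, exactly as in the proof of the analogous result of \cite{ghosh2013} for the density power divergence. Throughout, write $V_i(y;{\bm{\theta}})$ for the $i$-th summand of $H_n$ as in Equation (\ref{eq16}), so that $\nabla H_n({\bm{\theta}}) = n^{-1}\sum_{i=1}^n \nabla V_i(Y_i;{\bm{\theta}})$; since $\nabla H^{(i)}({\bm{\theta}}_g)=0$ for every $i$, each summand $\nabla V_i(Y_i;{\bm{\theta}}_g)$ has mean zero under $g_i$, the average of their covariance matrices being $\Omega_n$ of Equation (\ref{eq21}) and the average of the expected Hessians $E_{g_i}[\nabla^2 V_i]$ being $\Psi_n$ of Equation (\ref{eq20}).

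First I would establish existence of a consistent root by a Cram\'er-type argument. Fix a small $a>0$ and consider ${\bm{\theta}}$ with $\|{\bm{\theta}}-{\bm{\theta}}_g\|=a$. A third-order Taylor expansion of $H_n$ about ${\bm{\theta}}_g$ gives
\begin{equation*}
H_n({\bm{\theta}}) - H_n({\bm{\theta}}_g) = S_1 + S_2 + S_3,
\end{equation*}
with $S_1$ linear in $({\bm{\theta}}-{\bm{\theta}}_g)$ with coefficient $\nabla H_n({\bm{\theta}}_g)$, $S_2$ the quadratic form in the Hessian $n^{-1}\sum_i \nabla^2 V_i(Y_i;{\bm{\theta}}_g)$, and $S_3$ the cubic remainder. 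By the first part of (B6) and a weak law of large numbers for triangular arrays of independent variables, $\nabla H_n({\bm{\theta}}_g) \to 0$ in probability, so $S_1 = o_p(a)$ for fixed $a$; by the second part of (B6) the Hessian converges in probability to $\Psi_n$, whose minimum eigenvalue is bounded below by $\lambda_0>0$ (assumption (B4)), so $S_2$ is bounded below by a quantity of order $\lambda_0 a^2$ with probability tending to one; and by (B5), $|S_3| \le b\,a^3$ where $b = n^{-1}\sum_i\sum_{j,k,l} M^{(i)}_{jkl}(Y_i) = O_p(1)$. Hence, choosing $a$ small enough that the $a^2$ term dominates the $a^3$ term, with probability tending to one $H_n$ exceeds $H_n({\bm{\theta}}_g)$ on the whole sphere, so $H_n$ has a local minimum, and therefore a root of Equation (\ref{eq17}), in the open ball of radius $a$. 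Letting $a\downarrow 0$ produces a consistent sequence ${\bm{\theta}}_n$.

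For the second part, expand $\nabla H_n({\bm{\theta}}_n)=0$ to first order about ${\bm{\theta}}_g$,
\begin{equation*}
0 = \nabla H_n({\bm{\theta}}_g) + \big[ n^{-1}\sum_{i=1}^n \nabla^2 V_i(Y_i;{\bm{\theta}}_g) \big]({\bm{\theta}}_n - {\bm{\theta}}_g) + R_n,
\end{equation*}
where (B5) together with consistency of ${\bm{\theta}}_n$ gives $\|R_n\| = o_p(\|{\bm{\theta}}_n - {\bm{\theta}}_g\|)$. Using again that the bracketed Hessian differs from $\Psi_n$ by $o_p(1)$ and that $\Psi_n$ is uniformly invertible, I can rearrange to obtain
\begin{equation*}
\Psi_n \sqrt n({\bm{\theta}}_n - {\bm{\theta}}_g) = -n^{-1/2}\sum_{i=1}^n \nabla V_i(Y_i;{\bm{\theta}}_g) + o_p(1).
\end{equation*}
Premultiplying by $\Omega_n^{-1/2}$, the vector $\Omega_n^{-1/2} n^{-1/2}\sum_{i=1}^n \nabla V_i(Y_i;{\bm{\theta}}_g)$ is a normalised sum of independent mean-zero random vectors with identity covariance, and (B7) is precisely the Lindeberg condition for the multivariate Lindeberg--Feller central limit theorem; hence it converges in distribution to $N(0,I_s)$, and therefore so does $\Omega_n^{-1/2}\Psi_n\sqrt n({\bm{\theta}}_n-{\bm{\theta}}_g)$.

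The main obstacle --- and the genuine difference from the i.i.d.\ proof underlying Theorem \ref{thm:thm1} --- is that all the ``population'' quantities $\Psi_n$, $\Omega_n$ and the Hessian of $H_n$ depend on $n$, so one cannot simply invoke Slutsky's theorem with fixed limiting matrices; instead each of the three ingredients (vanishing of the mean of the score, convergence of the Hessian, and the CLT for the score) must be pushed through the triangular array using the uniform hypotheses (B4)--(B7): the eigenvalue floor $\lambda_0>0$, the uniform-integrability conditions of (B6) that drive the array weak laws, and the array Lindeberg condition (B7). Checking that the specific $V_i$ generated by the EWD $B$ function of Equation (\ref{eq6}) --- equivalently the weight $w(t)=1-\exp(-t/\beta)$ --- satisfies (B5)--(B7) for the regression-type models of interest is routine, and is where the boundedness of $w$ and of $u_i(y)w(f_i(y;{\bm{\theta}}))$ noted in Section \ref{subsec:if} does the work.
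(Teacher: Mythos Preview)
Your proposal is correct and matches the paper's approach exactly: the paper does not give an independent proof but simply states that the argument follows Appendix~1 of \cite{ghosh2013}, which is precisely the triangular-array M-estimation route (Cram\'er-type existence via a cubic Taylor expansion controlled by (B4)--(B6), followed by a linearisation of the score and the Lindeberg--Feller CLT driven by (B7)) that you have outlined. Your commentary on the role of the uniform hypotheses (B4)--(B7) in replacing the fixed-limit Slutsky arguments of the i.i.d.\ case is also on point.
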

\begin{proof}
The proof of this theorem follows exactly like the proof presented in Appendix 1 of \cite{ghosh2013}. 
\end{proof}

\begin{remark}
{\em On assumptions (B1) - (B7).} The assumptions (B1)–(B5) are simple generalizations of the assumptions (A1)-(A5) presented in Section \ref{subsec:asymp} of this manuscript. The assumptions (B6) and (B7) are similar in spirit to the corresponding assumptions required in the case of the maximum likelihood estimators under the similar independent non-homogeneous set-up as discussed in \cite{ibragimov1981}. These assumptions hold automatically for minimum Bregman divergence estimators in the i.i.d. case (see remark below). In subsequent sections we will see that these assumptions hold, for example, for the normal linear regression models under some mild conditions on the regressor variables.
\end{remark}

\begin{remark}
{\em For homogeneous data: a special case.} Note that, setting $f_i = f$ for all $i$, we get back the
corresponding asymptotic properties of the minimum Bregman divergence estimator for the i.i.d. case as given in Section \ref{subsec:asymp}. If $f_i = f,\quad i = 1, 2, \ldots$, we get $J^{(i)} = J,\ \xi_i = \xi$ for all i; thus $\Psi_n = J$ and $\Omega_n = K$. Here
$J, K\ \text{and}\ \xi$ are as defined in Section \ref{subsec:asymp}. In this case assumptions (B1)–(B5) are exactly the same as the assumptions (A1)-(A5) given in Section \ref{subsec:asymp}, while assumptions (B6) and (B7) are automatically satisfied by the  dominated convergence theorem. Thus, Theorem \ref{thm:thm1}, which establishes the consistency and asymptotic normality of the minimum Bregman divergence based estimator $\hat{{\bm{\theta}}}$ with $\sqrt{n}(\hat{{\bm{\theta}}_n} - {\bm{\theta}}_g)$ having the asymptotic covariance matrix $\Psi_n^{-1} \Omega_n \Psi_n^{-1} = J^{-1}KJ^{-1}$,  emerges as a special case of Theorem \ref{thm:thm2}. 
\end{remark}

\subsection{Application: Normal linear regression}
\label{linReg}
In this section, we will see that the theory proposed in Section \ref{inhd:asymp} would be immediately applicable in the case of linear regression under some mild conditions on the regressor variables. Specifically, the methodology described previously will immediately fall into place for the case of linear regression set-up with normal errors where the conditional approach to inference given fixed values of the explanatory variable is adopted. In this section, we will discuss applications of the proposed method in case of linear regression. Consider the linear regression model
\begin{equation*}
y_i = x^{T}_i \bm{\gamma} + \epsilon_i, \quad i= 1, 2, \ldots
\end{equation*}
where the error $\epsilon_i$’s are i.i.d. normal variables with mean zero and variance $\sigma^2$, $\bm{x}^T_i = (x_{i1}, x_{i2}, \ldots, x_{is})$ is the vector of the independent variables corresponding to
the $i$-th observation and $\bm{\gamma} = (\gamma_1,\ldots, \gamma_s)^T$ represents the regression coefficients.
We will assume that $\bm{x}_i$’s are fixed. Then $y_i \sim N(\bm{x}^T_i \bm{\gamma}, \sigma^2)$ and hence the $y_i$’s are independent but not identically distributed. Thus $y_i$’s satisfy the set-up of Sections \ref{inhd:intro} and \ref{inhd:asymp} and hence the minimum Bregman divergence estimator of the parameter ${\bm{\theta}} = (\bm{\gamma}^T, \sigma)^T$ can be obtained by minimizing the expression in Equation (\ref{eq16}) with $$f_i(y; {\bm{\theta}}) = \frac{1}{\sigma}\phi\Big(\frac{y - \bm{x}_i^T \bm{\gamma}}{\sigma}\Big)$$ where $\phi(\cdot)$ is the pdf of a standard normal random variable. Following the notation of Equation  (\ref{eq17}), we have the score equation as
\begin{equation*}
 \sum_{i=1}^{n} \Big[ u_i(Y_i; {\bm{\theta}})w[f_{i}(Y_i; {\bm{\theta}})] - \int u_i(t; {\bm{\theta}})w[f_{i}(t; {\bm{\theta}})]f_{i}(t; {\bm{\theta}})dt \Big] = 0,
\end{equation*}
where $w(t) = B''(t) \times t$ and the score function is given by
\begin{equation*}
  u_i(Y_i; {\bm{\theta}}) = \begin{bmatrix}
  \frac{(Y_i - \bm{x}_i^T\bm{\gamma)}}{\sigma^2} \bm{x}^T_i \quad; \quad
  \frac{(Y_i - \bm{x}_i^T\bm{\gamma})^2 - \sigma^2}{\sigma^3}
\end{bmatrix}^T.
\end{equation*}
Thus, we get the set of $s+1$ estimating equations:  
\begin{align*}
     \sum_{i=1}^{n} & \ x_{ij} (y_i - \bm{x}_i^T\bm{\gamma}))w(f_i(y; {\bm{\theta}})) = 0 \quad \forall \ j=1, \ldots, s, \\
   n^{-1}\sum_{i=1}^{n} & \Big[ \Big[\frac{(y - x_i^T\gamma)^2 - \sigma^2}{\sigma^3} \Big] w(f_i(y; {\bm{\theta}}))  - \\ & \quad \quad \int \Big[\frac{(y - x_i^T\gamma)^2 -  \sigma^2}{\sigma^3}\Big] w(f_i(y; {\bm{\theta}})) f_i(y; {\bm{\theta}}) dy \Big] = 0.
\end{align*}
Now, we can then solve these $s+1$ estimating equations numerically to obtain the estimates of ${\bm{\theta}}.$ 

\begin{remark}
{\em On asymptotic behaviour of the estimator $\hat{{\bm{\theta}}} = (\hat{\bm{\gamma}}^T, \hat{\sigma})^T
$}: For simplicity we will assume that the true data generating density $g_i$ also belongs to the model family of distributions, i.e., $g_i(\cdot) = f_{i}(\cdot; {\bm{\theta}})\ \forall\ i=1, 2, \ldots$ Then we can derive the simplified form of the matrices $\Psi_n\ \text{and}\ \Omega_n.$
We had previously defined $\Psi_n = n^{-1} \sum_{i=1}^{n} J^{(i)}$ and $\Omega_n = n^{-1} \sum_{i=1}^{n} K^{(i)}$. Using Equations (\ref{eq22}) and (\ref{eq23}), and the fact that $g_i(\cdot) = f_{i}(\cdot; {\bm{\theta}})\ \forall\ i=1, 2, \ldots$, we have
\begin{equation*}
\begin{aligned}
J^{(i)} &= \int u_{i}(y, {\bm{\theta}}_g) u_{i}^T(y, {\bm{\theta}}_g) w[f_{i}(y; {\bm{\theta}})] f_{i}(y; {\bm{\theta}}) dy,\\
K^{(i)} &= \int u_{i}(y, {\bm{\theta}}_g) u_{i}^T(y, {\bm{\theta}}_g) w^2[f_{i}(y; {\bm{\theta}})] f_{i}(y; {\bm{\theta}}) dy - \xi^{(i)} {\xi^{(i)}}^T,\\
\xi^{(i)}  &= \int u_{i}(y, {\bm{\theta}}_g)w[f_{i}(y; {\bm{\theta}})] f_{i}(y; {\bm{\theta}}) dy.
\end{aligned}
\end{equation*}
As in the previous sections, we have $w(t) = B''(t) \times t$;  in the case of EWD($\beta$), $w(t) = 1 - \exp(-t/\beta)$. 
It can be shown that $\hat{{\bm{\theta}}}_n$ is a consistent estimator of ${\bm{\theta}}$. Further, the asymptotic distribution of $\sqrt{n} \Omega_n^{-1/2}\Psi_n(\hat{{\bm{\theta}}}_n -  {\bm{\theta}})$ is multivariate normal with mean (vector) zero and covariance matrix $I_{s+1}$. This can be proved by consulting Theorem \ref{thm:thm2}.
\end{remark}

In the next section, we will see how this method works in the context of some real life data sets.

\subsubsection{Simple linear regression: Homicide from firearms and GDP}
\label{gun}

As an application of the robust regression method developed in Section \ref{linReg}, we consider modeling age-standardized national firearm-related homicide rates in 23 Western countries as a function of per-capita gross domestic product as of 2017. Information on GDP was obtained from \cite{cia} and data on firearm-related homicide rates were obtained from \cite{owidhomicides}. Figure \ref{fig:f9} is a scatter-plot of the data set described, where the independent variable per-capita gross domestic product is plotted on the X-axis, and firearm related homicide rate on the Y-axis. The United States of America has an abnormally high firearm related homicide rate in relation to its per-capita GDP, and this single outlier forces the least squares regression line to have a positive slope, which clearly contradicts the general configuration of points.

In comparison, the two MEWDE fits show a clear reversal in slope, and give more satisfactory descriptions of the rest of the data, sacrificing the large outlier. Table \ref{tab:tab7} shows how estimated coefficients vary as we change the tuning parameter $\beta$. We observe that for very small $\beta = 0.002$, our MEWDE estimator almost mimics the MLE$+$D estimator, implying that the MEWDE fits the data well by automatically downweighting the outlier, even for very small values of $\beta$.

\begin{table}[h]
\centering
\caption{Estimated regression parameters for homicide data.}
\label{tab:tab7}
\begin{tabular}{lccccccc}
\toprule
Estimates & MLE & E(0.002) & E(0.02) &  E(0.25) &  E(1) &  MLE+D  \\ \midrule
Intercept       & $-0.293$ & $0.356$ & $0.356$ & $0.404$ & $0.359$ & $0.356$ \\ 
GDP ($\times 10^{-6}$)         & $14.49$  &  $-3.045$   & $-3.042$&  $-4.087$  &  $-3.250$   &   $-3.042$   \\
\midrule
Error s.d.  & $0.959$ & $0.110$ & $0.111$ & $0.131$ & $0.106$ & $0.088$ \\ 
\bottomrule
\end{tabular}
\end{table}       

\begin{figure}[h]
  \centering
  \captionsetup{justification=centering}
  \includegraphics[width=0.8\textwidth]{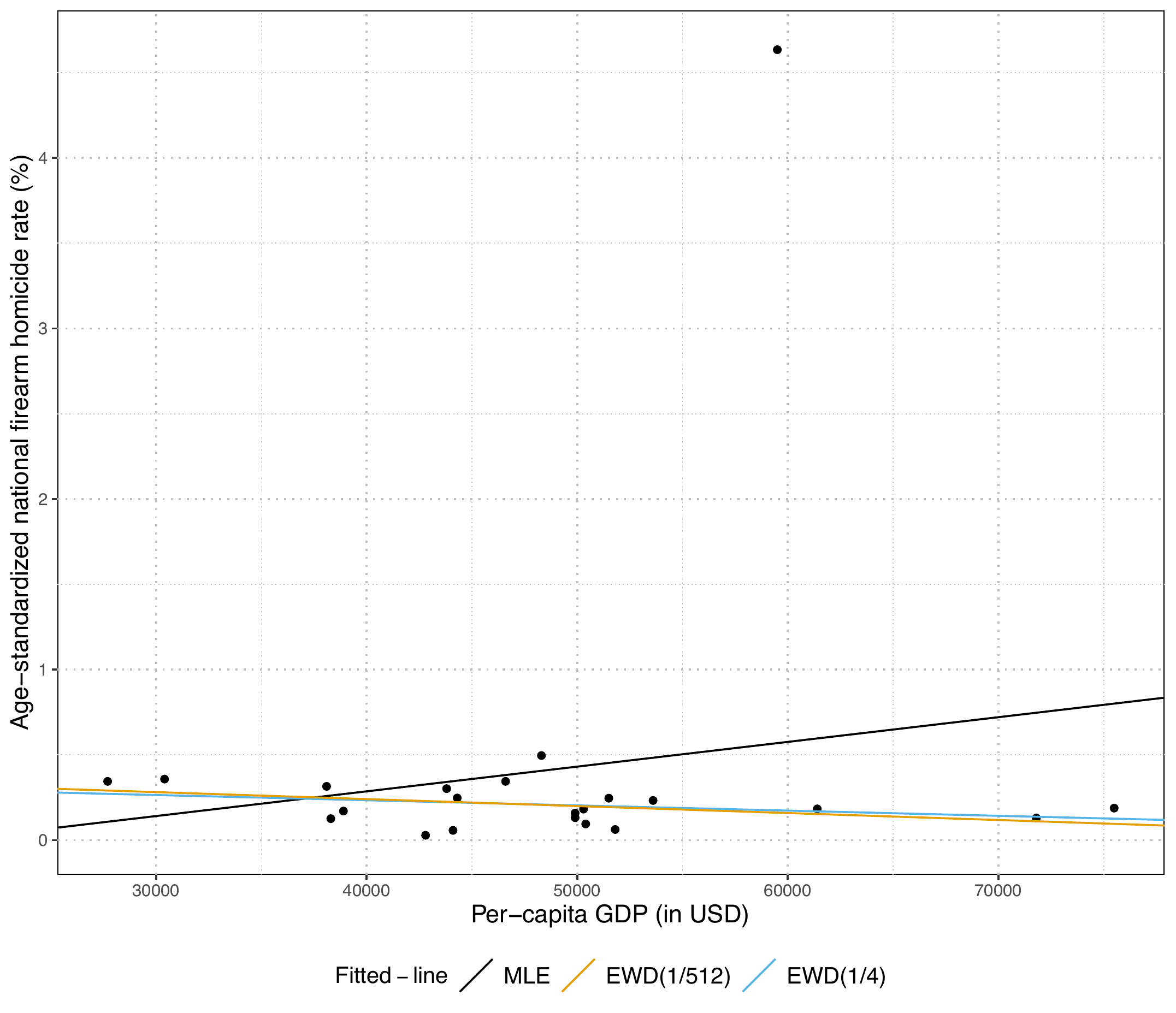}
  \caption{Modeling firearm-related homicide rates in  Western  countries  as  a  function  of  per-capita  gross  domestic  product: fits with ML and minimum EWD estimators.}\label{fig:f9}
\end{figure}

\subsubsection{Other examples of simple linear regression}
We have analyzed two other datasets, the Belgian telephone data and the Hertzsprung Russell star cluster data, both available in \cite{rousseeuw1987}, using the minimum divergence procedures based on the EWD and DPD. The details are provided in Appendix \ref{appBT}.

\subsubsection{Multiple linear regression: Alcohol solubility data}
\label{alcohol}
We consider fitting a multiple linear regression model to the dataset concerning alcohol solubility in water \citep{maronna2019}. The dataset gives, for 44 aliphatic alcohols, the logarithm of their solubility together with three physicochemical characteristics (namely, solvent accessible surface-bounded molecular volume (SAG), 
mass and volume). The interest is in predicting the solubility. Following the authors' suggestion of fitting an MM regression-based model to the data, we observe that 
four data points (roughly $10\%$ of the data set) are assigned much smaller `robustness weights' as compared to the remaining 40 data points.  Treating these four observations as outliers, we obtain the outlier-deleted maximum likelihood estimates (denoted by MLE$+$D) of the regression coefficients and error standard deviation. We also compute the robust LMS estimate. In order to estimate the error s.d. $\sigma$, we compute $\hat{\sigma} = \text{median} \lvert r_i - \text{median}(r_i) \rvert / 0.67449$.
\begin{table}[h]
\centering
\caption{Estimated regression parameters for alcohol solubility data (\cite{maronna2019}).}
\label{tab:tab8}
\begin{tabular}{lcccccccc}
\toprule
Estimates & MLE & LMS & E(0.1) &  E(0.4) & E(0.7) & MLE$+$D \\ \midrule
Intercept & $8.777$ & $3.617$ & $5.883$ & $3.974$ & $5.444$ & $6.829$ \\
SAG & $0.014$ & $0.177$ & $0.110$ & $0.163$ & $0.129$ & $0.077$ \\ 
Volume & $-0.040$ & $-0.191$ & $-0.133$ & $-0.179$ & $-0.152$ & $-0.102$ \\ 
Mass & $0.027$ & $0.248$ & $0.172$ & $0.235$ & $0.206$ & $0.127$ \\  \midrule
Error s.d. & $0.504$ & $0.405$ & $0.372$ & $0.145$ & $0.221$ & $0.389$ \\
\bottomrule
\end{tabular}
\end{table}
Finally, we compute minimum EWD($\beta$) regression parameter estimates for various values of $\beta$. Our findings are presented in Table \ref{tab:tab8}. 

Unlike simple linear regression, where the fit can be plotted and its suitability visually examined,  a visual inspection is not possible for the fits this multiple linear regression model. Thus the coefficients of Table \ref{tab:tab8} are not alone sufficient to give a full idea about how good the fits are, how stable and outlier-resistant their behaviors are. We therefore look at the residuals of each of these fits and try to determine how well they fare in terms of separating out the outliers. When there is a small number of outliers in the data, a robust and outlier-resistant procedure is likely to fit the good data part adequately and make the outliers stand out in terms of fitted residuals. A robust and resistant fit is supposed to properly model the majority good data and sacrifice the stray outliers, which then stand out  in terms of residuals.
\begin{figure}[h]
  \centering
  \captionsetup{justification=centering}
  \includegraphics[width=0.8\textwidth]{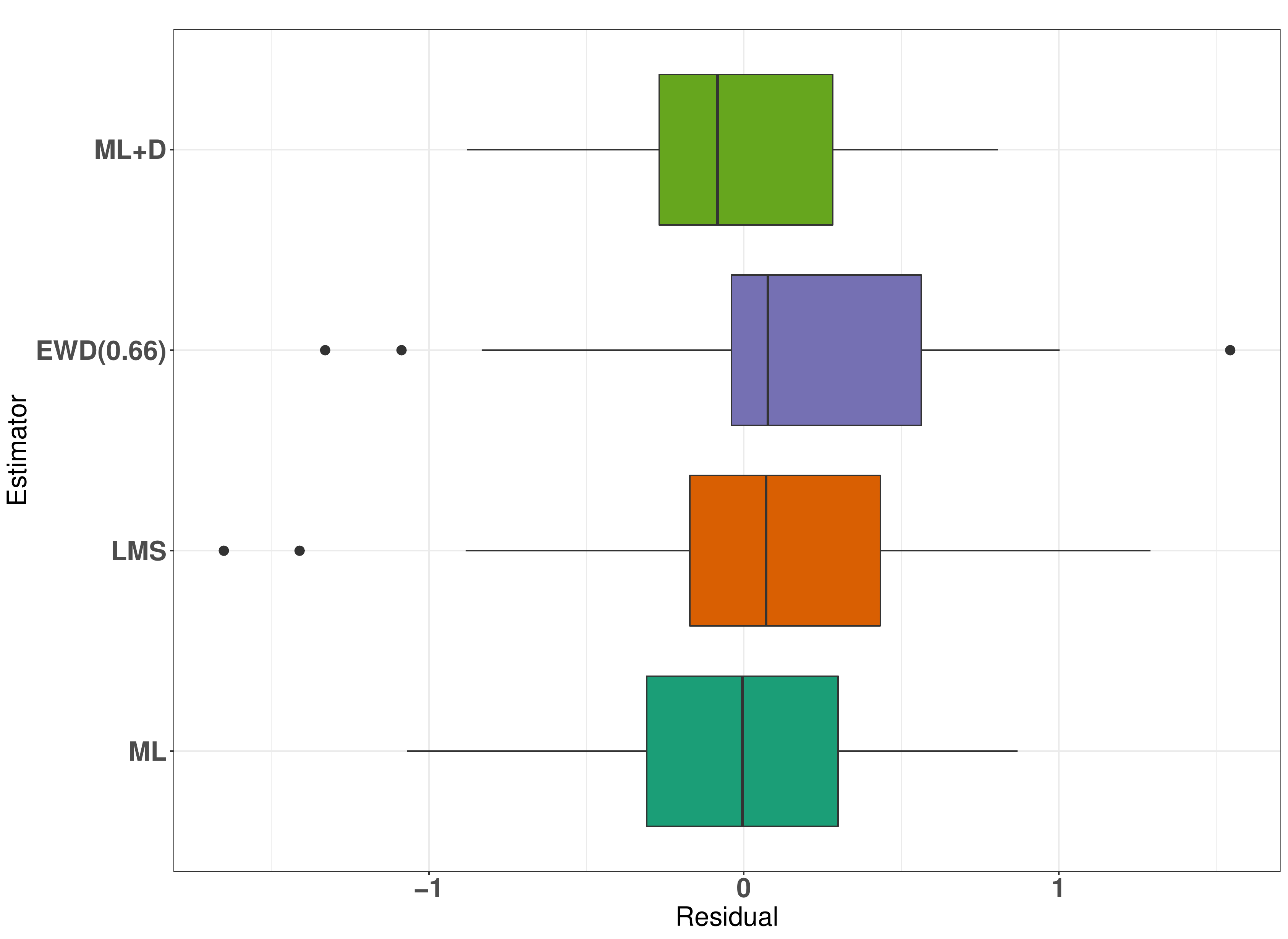}
  \caption{Residual boxplots of ML, LMS, ML+D and minimum EWD($0.66$) fits for alcohol solubility data \citep{maronna2019}.}\label{fig:f11}
\end{figure}
The non-robust fits, on the other hand, are highly affected by the outliers, and the residuals of this fit may no longer stand out, but get masked with the other residuals. In Figure \ref{fig:f11} we present the boxplots of the residuals of the ML (LS) fit, ML+D (outlier deleted LS) fit, the LMS fit and the minimum EWD(0.66) fit. The optimal tuning parameter is found to be $\beta_{OPT} = 0.66$. See Section \ref{tp:inhd} for more details.

It may be seen that the LMS and minimum EWD(0.66) procedure identifies two and three outliers, respectively, by the basic boxplot method. On the other hand, for the ML method the residuals of the outliers are masked with the good data, while for the ML+D method there are no outliers.  We also present the residual plots (against fitted values) of these four fits as well as the kernel density estimates of these outliers in Appendix \ref{app_alcohol} for further substantiation of this description.  

\subsection{Tuning parameter selection}
\label{tp:inhd}
As an extension of Section \ref{ihd:tune}, we refer to \cite{ghosh2013}, where the problem of tuning parameter selection in the context of independent and non-homogeneous data was discussed. The generalizations required in the case of minimum Bregman divergence estimation are relatively straightforward, so we do not dwell on those here. 

For the data on firearm-related homicide and GDP presented in Section \ref{gun}, we obtain the optimal tuning parameter to be $\beta_{OPT} = 1.6$, and the corresponding estimated regression parameters (intercept, GDP, error standard deviation) are $(0.416, -3.932 \times 10^{-6}, 0.066)$.

Similarly, for the data on alcohol solubility presented in Section \ref{alcohol}, we obtain the optimal tuning parameter to be $\beta_{OPT} = 0.66$, and the corresponding estimated regression parameters (intercept, SAG, Volume, Mass, error standard deviation) are $(6.084, 0.112, -0.135, 0.174, 0.062)$.

\section{Testing of hypotheses}
\subsection{Introduction}
In the following subsections, we make use of the EWD in constructing robust tests of hypotheses based on the Bregman divergence and the corresponding minimum divergence estimators.  Our work may be viewed as a generalization of the work presented in  \cite{basu2013testing} and \cite{basu2018testing}. We establish the asymptotic null distribution of the proposed test statistic and apply the theory developed to a real-life data set. As in the previous sections, our focus will remain on the exponentially weighted divergence. 

\subsection{Formulating the test statistic}
We begin with $\{ P_{\bm{{\bm{\theta}}}}: \bm{{\bm{\theta}}} \in \Omega \}$, an identifiable parametric family of probability measures on a measurable space $\{\chi, \mathscr{A} \}$ 
with an open parameter space $\Omega \subset \mathbb{R}^p,\ p\geq1.$ Measures $P_{\theta}$ are described by densities $f_{\theta}  = dP_{\theta} /d\mu$, absolutely continuous with 
respect to a dominating $\sigma$-finite measure $\mu$ on $\chi$. We have a sample of size $n$ given by $X_1, X_2, \ldots, X_n$ from a 
density belonging to the family $\mathscr{F}_{\bm{{\bm{\theta}}} } = \{f_{\bm{{\bm{\theta}}}} :  \bm{{\bm{\theta}}}  \in \Omega \}$. 
We will assume that the support of the distribution is independent of $ \bm{{\bm{\theta}}} $. Our aim is to test a general null hypothesis of the form 
\begin{equation}
\label{eq24}
H_0 : \bm{{\bm{\theta}}} \in \Omega_0 \text{ against } H_1: \bm{{\bm{\theta}}} \notin \Omega_0.    
\end{equation}
As in many practical hypothesis testing problems, we consider the set-up where the restricted parameter space specified by $H_0$ can be rewritten by a set of $r < p$ restrictions of the form 
\begin{equation}
\label{eq25}
\bm{m({\bm{\theta}}) = 0}_r
\end{equation}
on $\Omega$, where $\bm{m}: \mathbb{R}^p \rightarrow \mathbb{R}^r$ is a vector valued function such that the $p \times r$ matrix $$M({\bm{\theta}}) = \frac{\partial \bm{g^T({\bm{\theta}})}}{\partial \bm{{\bm{\theta}}}}$$ exists and is continuous in $\bm{{\bm{\theta}}}$ and rank($M({\bm{\theta}})$) $= r$. 

Given a sample, our approach to solving the hypothesis testing problem described in Equation (\ref{eq24}) will be to first obtain $\bm{\hat{{\bm{\theta}}}}_{B_1}$, the unrestricted minimum Bregman divergence estimator for a given $B_1$ function, then obtain the restricted minimum Bregman divergence estimator $\bm{\tilde{{\bm{\theta}}}}_{B_1}$, subject to the constraints specified by Equation (\ref{eq25}), for the same $B_1$ function. Finally, we will look at the family of Bregman divergence test statistics (BDTS) 
\begin{equation}
\label{eq26}
    T_{B_2}(\hat{\bm{{\bm{\theta}}}}_{B_1}, \tilde{\bm{{\bm{\theta}}}}_{B_1} ) = 2n \times {D}_{B_2}(f_{\bm{\hat{{\bm{\theta}}}}_{B_1}}, f_{\bm{\tilde{{\bm{\theta}}}}_{B_1}}),
\end{equation} where $D_{B_2} (g, f)$ is the Bregman divergence between two densities $g$ and $f$, defined in Equation (\ref{eq1}) with $B_2$ as the $B$ function. The asymptotic distribution of the test statistic can be worked out for the case where the functions $B_1$ and $B_2$ are distinct, and, for maximum flexibility of the method, we establish the asymptotic results of our testing procedure for this general case. In practice, however, it is not easy to determine the benefits of having two different functions in these two roles, and a single, suitably chosen function will generally work well in most cases. We will consider the functions $B_1$ and $B_2$ to have the same parametric form,  corresponding to exponentially weighted divergences, only differing, if at all, in the values of their tuning parameters. 

In Section \ref{subsec:asymp}, we have established the asympotic behaviour of the unrestricted minimum Bregman divergence estimator. In order to establish the asymptotic behaviour of the Bregman divergence test statistic, we first obtain the asymptotic distribution of restricted minimum Bregman divergence estimator $\bm{\tilde{{\bm{\theta}}}}_{B_1}$, and then work out the asymptotic properties of the family of test statistics given by Equation (\ref{eq26}).

\subsection{Restricted minimum Bregman divergence estimator}

\begin{thm} In addition to assumptions (A1) to (A5) in Section \ref{subsec:asymp}, we make the assumption 
\label{thm:rest}
\begin{enumerate}
    \item[(A6)] For all $\bm{{\bm{\theta}}} \in \omega$, the partial derivatives $\partial^2 m_l(\bm{{\bm{\theta}}})/ \partial {\bm{\theta}}_j \partial {\bm{\theta}}_k$ are bounded for all $j$, $k$ and $l$, where $m_l(\cdot)$ is the $l$-th element of $\bm{m}(\cdot)$
\end{enumerate}
We also assume that the true distribution belongs to the model and $\bm{{\bm{\theta}}}_0 \in  \Omega_0$ is the true parameter. Under this set-up, the minimum Bregman divergence estimator $\tilde{\bm{{\bm{\theta}}}}_{B_1}$ obtained under the constraints $\bm{m({\bm{\theta}}) = 0}_r$  has the following asymptotic properties. 
\begin{enumerate}
    \item The restricted minimum Bregman divergence estimating equation has a consistent sequence of roots, i.e., $$\tilde{\bm{{\bm{\theta}}}}_{B_1} \underset{n \rightarrow \infty}{\overset{P}{\rightarrow}} \bm{{\bm{\theta}}}_0.$$
    \item The null distribution of $\sqrt{n} (\tilde{\bm{{\bm{\theta}}}}_{B_1} - \bm{{\bm{\theta}}}_0)$ is given by an $p$ dimensional multivariate normal distribution with the zero mean vector and an $p\times p$ dispersion matrix $\Sigma_{B_1}(\bm{{\bm{\theta}}}_0)$. This matrix is defined as 
    \begin{equation}
        \label{eq27}
        \Sigma_{B_1}(\bm{{\bm{\theta}}}) = P_{B_1}(\bm{{\bm{\theta}}}) K_{B_1}(\bm{{\bm{\theta}}}) P_{B_1}(\bm{{\bm{\theta}}}),
    \end{equation}
    where $K_{B_1}(\bm{{\bm{\theta}}})$ is defined by Equation (\ref{eq11}) with $B_1$ as the relevant $B$ function. The matrix $P_{B_1}(\bm{{\bm{\theta}}})$ is defined as 
    \begin{equation}
        \label{eq28}
        P_{B_1}(\bm{{\bm{\theta}}}) = J^{-1}_{B_1}(\bm{{\bm{\theta}}}) - Q_{B_1}(\bm{{\bm{\theta}}}) M^T_{B_1}(\bm{{\bm{\theta}}}) J^{-1}_{B_1}(\bm{{\bm{\theta}}}), 
    \end{equation}
    where $J_{B_1}(\bm{{\bm{\theta}}})$ is defined by Equation (\ref{eq12}) with $B_1$ as the associated $B$ function, and the matrix $Q_{B_1}(\bm{{\bm{\theta}}})$ is defined as
    \begin{equation}
        \label{eq29}
        Q_{B_1}(\bm{{\bm{\theta}}}) = J^{-1}_{B_1}(\bm{{\bm{\theta}}}) M_{B_1}(\bm{{\bm{\theta}}})  \Big[M^T_{B_1}(\bm{{\bm{\theta}}}) J^{-1}_{B_1}(\bm{{\bm{\theta}}})  M_{B_1}(\bm{{\bm{\theta}}})\Big]^{-1}. 
    \end{equation}
\end{enumerate}
\end{thm}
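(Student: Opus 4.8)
The plan is to handle the constraint $\bm m(\bm\theta)=\bm 0_r$ by a Lagrangian formulation and then reduce the problem to the same Taylor-expansion-plus-CLT scheme already used for the unrestricted estimator in Theorem~\ref{thm:thm1}. Writing $H_n$ as in Equation~(\ref{eq9}) with $B=B_1$, the restricted minimiser $\tilde{\bm\theta}_{B_1}$, together with a multiplier $\bm\lambda_n\in\mathbb R^r$, solves
\begin{equation*}
\nabla H_n(\tilde{\bm\theta}_{B_1}) + M(\tilde{\bm\theta}_{B_1})\,\bm\lambda_n = \bm 0_p,
\qquad \bm m(\tilde{\bm\theta}_{B_1}) = \bm 0_r .
\end{equation*}
For part~1 I would establish consistency exactly as in the unrestricted case (the proof of Theorem~\ref{thm:thm1}, i.e.\ Theorem~9.2 of \cite{basu2011}): restricting to the constraint surface in a neighbourhood of $\bm\theta_0$ and using (A1)--(A5), a Cram\'er-type argument produces a sequence of roots of the restricted estimating equations converging in probability to $\bm\theta_0$. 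Here one uses that $\bm\theta_0\in\Omega_0$, so the constraints hold at the truth, that $\nabla H_n(\bm\theta_0)\overset{P}{\rightarrow}\bm 0$ by the law of large numbers, and that $\nabla^2 H_n(\bm\theta_0)\overset{P}{\rightarrow} J_{B_1}(\bm\theta_0)$, which is positive definite by (A4).

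For part~2 I would Taylor-expand both displayed equations about $\bm\theta_0$. Using (A2)--(A5), $\nabla H_n(\tilde{\bm\theta}_{B_1}) = \nabla H_n(\bm\theta_0) + J_{B_1}(\bm\theta_0)(\tilde{\bm\theta}_{B_1}-\bm\theta_0) + o_p(n^{-1/2})$, the remainder being dominated via the integrable bounds $M_{jkl}$ of (A5); and, invoking (A6) (bounded second derivatives of the $m_l$) together with $\bm m(\bm\theta_0)=\bm 0_r$, $\bm m(\tilde{\bm\theta}_{B_1}) = M^T(\bm\theta_0)(\tilde{\bm\theta}_{B_1}-\bm\theta_0) + o_p(n^{-1/2})$. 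Substituting and multiplying by $\sqrt n$ gives the bordered linear system
\begin{equation*}
\begin{pmatrix} J_{B_1}(\bm\theta_0) & M(\bm\theta_0) \\ M^T(\bm\theta_0) & \bm 0 \end{pmatrix}
\begin{pmatrix} \sqrt n(\tilde{\bm\theta}_{B_1}-\bm\theta_0) \\ \sqrt n\,\bm\lambda_n \end{pmatrix}
= \begin{pmatrix} -\sqrt n\,\nabla H_n(\bm\theta_0) \\ \bm 0 \end{pmatrix} + o_p(1).
\end{equation*}

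The next step is to solve this system. Since $J_{B_1}(\bm\theta_0)$ is positive definite and $M(\bm\theta_0)$ has full column rank $r$, the coefficient matrix is invertible, and the standard block-inversion formula identifies its $(1,1)$ block as $J_{B_1}^{-1} - J_{B_1}^{-1}M(M^TJ_{B_1}^{-1}M)^{-1}M^TJ_{B_1}^{-1}$, which is exactly $P_{B_1}(\bm\theta_0)$ of Equations~(\ref{eq28})--(\ref{eq29}); hence $\sqrt n(\tilde{\bm\theta}_{B_1}-\bm\theta_0) = -P_{B_1}(\bm\theta_0)\,\sqrt n\,\nabla H_n(\bm\theta_0) + o_p(1)$. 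Finally, $\nabla H_n(\bm\theta_0) = n^{-1}\sum_{i=1}^n \nabla V_{\bm\theta_0}(X_i)$ is an average of i.i.d.\ terms with mean $\bm 0$ (this is the population estimating equation at $\bm\theta_0$) and covariance $K_{B_1}(\bm\theta_0)$ of Equation~(\ref{eq11}) (the covariance of $u_{\bm\theta_0}(X)B''(f_{\bm\theta_0}(X))f_{\bm\theta_0}(X)$, the only non-constant part of $\nabla V_{\bm\theta_0}$), so the multivariate CLT gives $\sqrt n\,\nabla H_n(\bm\theta_0)\overset{d}{\rightarrow} N(\bm 0, K_{B_1}(\bm\theta_0))$. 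By Slutsky and symmetry of $P_{B_1}$ we conclude $\sqrt n(\tilde{\bm\theta}_{B_1}-\bm\theta_0)\overset{d}{\rightarrow} N(\bm 0, P_{B_1}K_{B_1}P_{B_1})$, matching Equation~(\ref{eq27}).

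I expect the main obstacle to be part~1: establishing a consistent sequence of roots of the \emph{constrained} estimating equations, where one cannot lean on a global convexity argument and must instead localise the Cram\'er/Brouwer-type argument to the constraint manifold, with the passage from $\bm m(\tilde{\bm\theta}_{B_1})=\bm 0$ to its linearisation (the place where (A6) enters) being the other delicate bookkeeping point. The remaining steps --- the block inversion and the CLT --- are essentially mechanical, and the whole argument runs parallel to the treatment of the restricted minimum density power divergence estimator in \cite{basu2013testing} and \cite{basu2018testing}.
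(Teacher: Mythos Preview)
Your proposal is correct and follows essentially the same approach as the paper: the paper does not give an independent proof but simply states that the argument follows exactly as in the Appendix of \cite{basu2018testing}, which is precisely the Lagrangian/Taylor-expansion/CLT scheme you have outlined. Your explicit identification of the $(1,1)$ block of the bordered inverse with $P_{B_1}(\bm\theta_0)$ and your remark that the argument parallels \cite{basu2013testing, basu2018testing} are exactly on target.
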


\begin{proof}
The proof of this theorem follows exactly like the proof presented in the Appendix  of \cite{basu2018testing}. 
\end{proof}

It is interesting to note that Theorem \ref{thm:rest} is an extension of Theorem \ref{thm:thm1}. While the former allows for estimation in a restricted parameter space, the latter does not. As a result, when dealing with an unrestricted parameter space, ${M}$ becomes a null matrix and consequently, $P_{B_1}(\bm{{\bm{\theta}}}) = J^{-1}_{B_1}(\bm{{\bm{\theta}}})$ and the asymptotic dispersion matrix of the unrestricted minimum Bregman divergence estimator reduces to the form specified by Theorem \ref{thm:thm1}.

\subsection{Bregman divergence test statistic}
First, we fix a function $B_1$ and denote $\hat{\bm{{\bm{\theta}}}}_{B_1}$ as the unconstrained minimum Bregman divergence estimator of $\bm{{\bm{\theta}}}$, and $\tilde{\bm{{\bm{\theta}}}}_{B_1}$ as the restricted estimator under the null hypothesis specified by Equation (\ref{eq24}). Next, we consider another function $B_2$ (which is, for simplicity, assumed to have the same functional form as $B_1$, only differing in the value(s) of tuning parameter(s)) and construct the BDTS, as defined in Equation (\ref{eq26}). In the following theorem, we present the asymptotic distribution of the family of BDTS. 
\begin{thm}
\label{thm:bdts}
We assume that conditions (A1) - (A5) of Theorem \ref{thm:thm1} and condition (A6) of Theorem \ref{thm:rest} holds. The asymptotic distribution of $T_{B_2}(\hat{\bm{{\bm{\theta}}}}_{B_1}, \tilde{\bm{{\bm{\theta}}}}_{B_1} )$ defined in Equation (\ref{eq26}) coincides with, under the null hypothesis specified in Equation (\ref{eq24}), the distribution of the random variable $$\sum_{i=1}^{k} \lambda_{i}(B_1, B_2, \bm{{\bm{\theta}}}) Z^2_i,$$
where $Z_1, \ldots, Z_k$ are independent standard normal variables and $\lambda_i(B_1, B_2, \bm{{\bm{\theta}}})$ for $i= 1, \ldots, k$ are the nonzero eigenvalues of the matrix 
$$A_{B_2}(\bm{{\bm{\theta}}_0}) B_{B_1}(\bm{{\bm{\theta}}_0}) K_{B_1}( \bm{{\bm{\theta}}_0}) B_{B_1}(\bm{{\bm{\theta}}_0}),$$ and $k$ is the rank of the matrix given by
\begin{equation} 
\label{eq29}
     B_{B_1}(\bm{{\bm{\theta}}_0}) K_{B_1}( \bm{{\bm{\theta}}_0}) B_{B_1}(\bm{{\bm{\theta}}_0})
                    A_{B_2}(\bm{{\bm{\theta}}_0}) B_{B_1}(\bm{{\bm{\theta}}_0}) K_{B_1}( \bm{{\bm{\theta}}_0}) B_{B_1}(\bm{{\bm{\theta}}_0}).
\end{equation}
The matrix $A_{B_2}(\bm{{\bm{\theta}}_0})$ is defined element-wise by
\begin{equation}
    \label{eq30}
    \begin{split}
     & = \Big(a^{B_2}_{ij} (\bm{{\bm{\theta}}_0}) \Big)_{p \times p} \\
                        & = \int \Bigg[ B_2''(f_{\bm{{\bm{\theta}}_0}}(x)) \frac{\partial f_{\bm{{\bm{\theta}}_0}}(x)}{\partial {\bm{\theta}}_i} \frac{\partial f_{\bm{{\bm{\theta}}_0}}(x)}{\partial {\bm{\theta}}_j} \Bigg] dx,
\end{split}
\end{equation} and $B_{B_1}(\bm{{\bm{\theta}}_0})$ is the matrix
\begin{equation}
   \label{eq31} 
      \boldsymbol{J}_{B_1}^{-1}\left(\boldsymbol{{\bm{\theta}}}_{0}\right) \boldsymbol{M}\left(\boldsymbol{{\bm{\theta}}}_{0}\right)\left[\boldsymbol{M}^{T}\left(\boldsymbol{{\bm{\theta}}}_{0}\right) \boldsymbol{J}_{B_1}^{-1}\left(\boldsymbol{{\bm{\theta}}}_{0}\right) \boldsymbol{M}\left(\boldsymbol{{\bm{\theta}}}_{0}\right)\right]^{-1} \boldsymbol{M}^{T}\left(\boldsymbol{{\bm{\theta}}}_{0}\right) \boldsymbol{J}_{B_1}^{-1}\left(\boldsymbol{{\bm{\theta}}}_{0}\right).
\end{equation}
\end{thm}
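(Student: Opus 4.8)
The plan is to reduce $T_{B_2}(\hat{\bm{\theta}}_{B_1},\tilde{\bm{\theta}}_{B_1})$ to a quadratic form in $\sqrt{n}(\hat{\bm{\theta}}_{B_1}-\tilde{\bm{\theta}}_{B_1})$ and then read off the limiting law of that vector from the joint behaviour of the restricted and unrestricted estimators. First I would hold the second argument fixed at $\tilde{\bm{\theta}}_{B_1}$ and Taylor expand $D_{B_2}(f_{\bm{\theta}},f_{\tilde{\bm{\theta}}_{B_1}})$ in $\bm{\theta}$ about $\bm{\theta}=\tilde{\bm{\theta}}_{B_1}$, then substitute $\bm{\theta}=\hat{\bm{\theta}}_{B_1}$. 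Since $D_{B_2}(f,f)=0$ and $f\mapsto D_{B_2}(f,f_{\tilde{\bm{\theta}}_{B_1}})$ is minimised at $f=f_{\tilde{\bm{\theta}}_{B_1}}$, the constant and first-order terms vanish, and differentiating Equation~(\ref{eq1}) twice in $\bm{\theta}$ shows that the Hessian, evaluated on the diagonal, is precisely the matrix $A_{B_2}(\tilde{\bm{\theta}}_{B_1})$ of Equation~(\ref{eq30}) — the contribution of the $B_2'(f_{\bm{\theta}})-B_2'(f_{\tilde{\bm{\theta}}_{B_1}})$ term drops out there. Using consistency of $\hat{\bm{\theta}}_{B_1}$ (Theorem~\ref{thm:thm1}) and of $\tilde{\bm{\theta}}_{B_1}$ (Theorem~\ref{thm:rest}), the $\sqrt n$-rate of both, the smoothness conditions (A2)--(A3) and the third-derivative domination (A5) applied with $B=B_2$, and continuity of $\bm{\theta}\mapsto A_{B_2}(\bm{\theta})$, the cubic remainder is $O_P(n^{-3/2})$ and $A_{B_2}(\tilde{\bm{\theta}}_{B_1})=A_{B_2}(\bm{\theta}_0)+o_P(1)$, so that
\[
T_{B_2}(\hat{\bm{\theta}}_{B_1},\tilde{\bm{\theta}}_{B_1})=\big[\sqrt n(\hat{\bm{\theta}}_{B_1}-\tilde{\bm{\theta}}_{B_1})\big]^{T}A_{B_2}(\bm{\theta}_0)\big[\sqrt n(\hat{\bm{\theta}}_{B_1}-\tilde{\bm{\theta}}_{B_1})\big]+o_P(1).
\]

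Next I would import, from the proofs underlying Theorems~\ref{thm:thm1} and~\ref{thm:rest}, the first-order (Bahadur-type) representations of both estimators in terms of the common normalised score $Z_n=n^{-1/2}\sum_{i=1}^{n}\psi_{B_1}(X_i,\bm{\theta}_0)$, with $\psi_{B_1}$ as in Equation~(\ref{eq7}): namely $\sqrt n(\hat{\bm{\theta}}_{B_1}-\bm{\theta}_0)=J_{B_1}^{-1}(\bm{\theta}_0)Z_n+o_P(1)$ and $\sqrt n(\tilde{\bm{\theta}}_{B_1}-\bm{\theta}_0)=P_{B_1}(\bm{\theta}_0)Z_n+o_P(1)$, with $P_{B_1}$ as in Equation~(\ref{eq28}). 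Subtracting and invoking the algebraic identity $J_{B_1}^{-1}-P_{B_1}=Q_{B_1}M^{T}J_{B_1}^{-1}=J_{B_1}^{-1}M\big[M^{T}J_{B_1}^{-1}M\big]^{-1}M^{T}J_{B_1}^{-1}$ (with $Q_{B_1}$ as in Theorem~\ref{thm:rest}) yields $\sqrt n(\hat{\bm{\theta}}_{B_1}-\tilde{\bm{\theta}}_{B_1})=B_{B_1}(\bm{\theta}_0)Z_n+o_P(1)$, where $B_{B_1}(\bm{\theta}_0)$ is the matrix of Equation~(\ref{eq31}). Since $\mathrm{E}_{F_{\bm{\theta}_0}}[\psi_{B_1}(X,\bm{\theta}_0)]=0$ and $\mathrm{Var}_{F_{\bm{\theta}_0}}[\psi_{B_1}(X,\bm{\theta}_0)]=K_{B_1}(\bm{\theta}_0)$, the multivariate central limit theorem gives $Z_n\xrightarrow{d}N(0,K_{B_1}(\bm{\theta}_0))$, whence $\sqrt n(\hat{\bm{\theta}}_{B_1}-\tilde{\bm{\theta}}_{B_1})\xrightarrow{d}N\big(0,B_{B_1}K_{B_1}B_{B_1}\big)$, all matrices evaluated at $\bm{\theta}_0$.

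Combining the two steps through Slutsky's theorem and the continuous mapping theorem gives $T_{B_2}\xrightarrow{d}W^{T}A_{B_2}(\bm{\theta}_0)W$ with $W\sim N(0,\Sigma)$, $\Sigma=B_{B_1}(\bm{\theta}_0)K_{B_1}(\bm{\theta}_0)B_{B_1}(\bm{\theta}_0)$. The final step is the classical spectral reduction of a Gaussian quadratic form: writing $W=\Sigma^{1/2}\tilde W$ with $\tilde W\sim N(0,I_p)$ and orthogonally diagonalising the symmetric matrix $\Sigma^{1/2}A_{B_2}(\bm{\theta}_0)\Sigma^{1/2}$ gives $W^{T}A_{B_2}(\bm{\theta}_0)W\overset{d}{=}\sum_{i=1}^{k}\lambda_i Z_i^{2}$ with $Z_1,\dots,Z_k$ i.i.d.\ standard normal and $\lambda_1,\dots,\lambda_k$ the nonzero eigenvalues of $\Sigma^{1/2}A_{B_2}(\bm{\theta}_0)\Sigma^{1/2}$; by invariance of nonzero eigenvalues under cyclic permutation these coincide with the nonzero eigenvalues of $A_{B_2}(\bm{\theta}_0)B_{B_1}(\bm{\theta}_0)K_{B_1}(\bm{\theta}_0)B_{B_1}(\bm{\theta}_0)$, and $k$, their number, equals — using positive semidefiniteness of $A_{B_2}$ (from convexity of $B_2$) and of $\Sigma$ — the rank of $B_{B_1}(\bm{\theta}_0)K_{B_1}(\bm{\theta}_0)B_{B_1}(\bm{\theta}_0)A_{B_2}(\bm{\theta}_0)B_{B_1}(\bm{\theta}_0)K_{B_1}(\bm{\theta}_0)B_{B_1}(\bm{\theta}_0)$, the matrix displayed in the statement. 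This yields the stated conclusion.

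I expect the main obstacle to lie in the second step: one must extract from the proofs of Theorems~\ref{thm:thm1} and~\ref{thm:rest} not merely the marginal asymptotic normality of each estimator but representations of $\hat{\bm{\theta}}_{B_1}$ and $\tilde{\bm{\theta}}_{B_1}$ driven by the \emph{same} empirical score $Z_n$, so that their difference linearises cleanly; in particular one must verify that the Lagrange-multiplier expansion of the constrained estimating equations collapses exactly to $P_{B_1}(\bm{\theta}_0)Z_n$ and that the multiplier contribution, after multiplication by $\sqrt n$, is $o_P(1)$. This step mirrors the argument in the appendix of \cite{basu2018testing}, and condition (A6) enters precisely here, to control the second derivatives of the constraint functions. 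The Taylor control of Step~1 and the quadratic-form diagonalisation of Step~3 are routine once Assumptions (A1)--(A6) are in force.
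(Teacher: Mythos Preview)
Your proposal is correct and follows essentially the same approach as the paper, which simply defers to Theorem~6 of \cite{basu2018testing}: a second-order Taylor expansion of the divergence in its first argument to obtain a quadratic form in $\sqrt{n}(\hat{\bm{\theta}}_{B_1}-\tilde{\bm{\theta}}_{B_1})$, joint linearisation of the unrestricted and restricted estimators via the common score $Z_n$ to identify the difference as $B_{B_1}(\bm{\theta}_0)Z_n+o_P(1)$, and the standard spectral reduction of a Gaussian quadratic form. Your identification of the Hessian on the diagonal as $A_{B_2}$, the algebra $J_{B_1}^{-1}-P_{B_1}=B_{B_1}$, and the role of (A6) in controlling the constrained expansion are all exactly as in the referenced argument.
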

\begin{proof}
The proof of this theorem resembles the proof of Theorem 6 of \cite{basu2018testing}. 
\end{proof}

\begin{remark}
We observe that the ranks of $$\boldsymbol{B}_{B_1}\left(\boldsymbol{{\bm{\theta}}}_{0}\right) \boldsymbol{K}_{B_1}\left(\boldsymbol{{\bm{\theta}}}_{0}\right) \boldsymbol{B}_{B_1}\left(\boldsymbol{{\bm{\theta}}}_{0}\right)$$
and  
$$\boldsymbol{B}_{B_1}\left(\boldsymbol{{\bm{\theta}}}_{0}\right) \boldsymbol{K}_{B_1}\left(\boldsymbol{{\bm{\theta}}}_{0}\right) \boldsymbol{B}_{B_1}\left(\boldsymbol{{\bm{\theta}}}_{0}\right) \boldsymbol{A}_{B_2}\left(\boldsymbol{{\bm{\theta}}}_{0}\right) \boldsymbol{B}_{B_1}\left(\boldsymbol{{\bm{\theta}}}_{0}\right) \boldsymbol{K}_{B_1}\left(\boldsymbol{{\bm{\theta}}}_{0}\right) \boldsymbol{B}_{B_1}\left(\boldsymbol{{\bm{\theta}}}_{0}\right)$$
are equal. Further, $\boldsymbol{B}_{B_1}\left(\boldsymbol{{\bm{\theta}}}_{0}\right) \boldsymbol{K}_{B_1}\left(\boldsymbol{{\bm{\theta}}}_{0}\right) \boldsymbol{B}_{B_1}\left(\boldsymbol{{\bm{\theta}}}_{0}\right)$ and $\boldsymbol{M({\bm{\theta}}_0)}$ both have the same rank $r$. So, $k=r$ and there are exactly $r$ many nonzero eigenvalues. 
\end{remark}

\begin{remark}
The critical region of the BDTS needs to be found so that the test can be carried out. An easy way to approximate the required critical region is outlined here. From Theorem \ref{thm:bdts} it is obvious that the $k$ eigenvalues described are functions of $\bm{{\bm{\theta}}}_0$. Under the null, they can be estimated in a consistent manner by plugging in $\tilde{\bm{{\bm{\theta}}}}_{B_1}$ in place of $\bm{{\bm{\theta}}}_0$. Let these estimated eigenvalues be $\hat{\lambda}_1, \cdots, \hat{\lambda}_k$. Generating $k$ many independent observations $Z_1, \cdots, Z_k$ from the $N(0,1)$ distribution repeatedly, one can obtain empirical estimates of the quantiles of $\sum_{i} \hat{\lambda}_i Z^2_i$ by replicating this procedure a sufficient number of times.
\end{remark}

\begin{remark}
An approximate form of the power function of the BDTS can be obtained by following the steps outlined  by Theorem 7 of \cite{basu2018testing}.
\end{remark}

\subsection{Normal case: constructing the exponentially weighted divergence test statistic}
We will now focus on a special case of the Bregman divergence test statistic --- the exponentially weighted divergence test statistic (EWDTS). 
Under the $N(\mu, \sigma^2)$ model, consider the problem of testing 
\begin{equation}
\label{eq32}
    H_0: \mu = \mu_0 \ \text{versus}\ H_1: \mu \neq \mu_0,
\end{equation}
where $\sigma$ is an unknown nuisance parameter. We note that the unrestricted parameter space is $\Omega = \{(\mu, \sigma)^T: \mu \in \mathbb{R},\ \sigma \in \mathbb{R}^{+} \}$ and that the restricted paramter space is $\Omega_0 = \{(\mu_0, \sigma)^T: \sigma \in \mathbb{R}^{+} \}$. We consider the restriction $m(\bm{{\bm{\theta}}}) = \mu - \mu_0$ where $\bm{{\bm{\theta}}} = (\mu, \sigma)^T$ so that the null hypothesis can be rewritten as $$H_0: m(\bm{{\bm{\theta}}}) = 0.$$ As a result of this formulation, we are now ready to discuss the testing problem in the framework of Theorems \ref{thm:rest} and \ref{thm:bdts}. We have already discussed the unrestricted minimum exponentially weighted divergence estimation of unknown mean and standard deviation parameters for the normal distribution in Section \ref{shoshoni}. We denote the unrestricted minimum EWD estimator obtained by $\hat{\bm{{\bm{\theta}}}}_{\beta}$, $\beta$ being the tuning parameter associated with the $B$ function of the EWD. Now, we turn our attention to the restricted minimum EWD estimation of $\bm{{\bm{\theta}}} = (\mu, \sigma)$ when subject to the restriction $\mu = \mu_0$. The estimate will be obtained by minimising the empirical estimate of the exponentially weighted divergence 
\begin{equation}
    \label{eq33}
    \tilde{\bm{{\bm{\theta}}}}_\beta = \underset{\bm{{\bm{\theta}}} \in \Omega_0}{\text{argmin}}  \int \Big[ B'(f_{\bm{{\bm{\theta}}}}(x)) f_{\bm{{\bm{\theta}}}}(x) - B(f_{\bm{{\bm{\theta}}}}(x))   \Big] dx - n^{-1} \sum_{i=1}^{n} B'(f_{\bm{{\bm{\theta}}}}(X_i)), 
\end{equation}
where the second derivative of $B$ obeys the relation $B''(t) \times t = 1 - \exp(-t/\beta)$. It should be noted that we use the same tuning parameter $\beta$ when looking for restricted as well as unrestricted estimates of $\bm{{\bm{\theta}}}$. For the testing problem described by Equation (\ref{eq32}), we take $f_{\bm{{\bm{\theta}}}}(x) = \phi ((x-\mu)/\sigma)/\sigma$ and obtain the restricted estimate $\tilde{\bm{{\bm{\theta}}}}_\beta = (\mu_0, \tilde{\sigma}_\beta)$, where
\begin{align*}
    \tilde{\sigma}_\beta = \underset{\sigma > 0}{\text{argmin}}  \int \Bigg[ B' \Bigg(\frac{1}{\sigma} \phi & \Big(\frac{x-\mu_0}{\sigma} \Big) \Bigg) \frac{1}{\sigma} \phi \Big(\frac{x-\mu_0}{\sigma}\Big)  - B \Bigg(\frac{1}{\sigma} \phi \Big(\frac{x-\mu_0}{\sigma} \Big) \Bigg)   \Bigg] dx \\
    & - \frac{1}{n} \sum_{i=1}^{n} B' \Bigg(\frac{1}{\sigma} \phi \Big(\frac{X_i-\mu_0}{\sigma} \Big) \Bigg). 
\end{align*}
Now, for some tuning parameter $\gamma$, we construct the EWDTS required to test the hypothesis specified by Equation (\ref{eq32}). The test statistic is given by 
\begin{equation}
    \label{eq34}
    T_{\gamma} ({\hat{\bm{{\bm{\theta}}}}_\beta}, {\tilde{\bm{{\bm{\theta}}}}_\beta}) = 2n \times D_{\gamma} (f_{\hat{\bm{{\bm{\theta}}}}_\beta}, f_{\tilde{\bm{{\bm{\theta}}}}_\beta})
\end{equation}
where $D_\gamma$ is the exponentially weighted divergence with tuning parameter $\gamma$. For notational convenience, here we index the divergence $D$ (in the subscript) by the tuning parameter $\gamma$ of the EWD, rather than by the corresponding convex function. 

We have already established that the null hypothesis can be well specified by one linear constraint on $\mu$. So, using Theorem \ref{thm:bdts}, we can claim that the asymptotic null distribution of the EWDTS may be characterized by  $\lambda(\bm{{\bm{\theta}}_0}, \beta, \gamma) Z^2$, where $Z \sim N(0, 1)$ and $\lambda(\bm{{\bm{\theta}}_0}, \beta, \gamma)$ is the nonzero eigenvalue of the matrix $A_{\gamma}({\bm{\theta}}_0) B_{\beta}({\bm{\theta}}_0) K_{\beta}({\bm{\theta}}_0) B_{\beta}({\bm{\theta}}_0)$. Since the value of $\bm{{\bm{\theta}}}_0$ is unknown, we can plug in $\tilde{\bm{{\bm{\theta}}}}_\beta$ in its place and claim 
\begin{equation}
    \label{eq35}
    \frac{T_{\gamma} ({\hat{\bm{{\bm{\theta}}}}_\beta}, {\tilde{\bm{{\bm{\theta}}}}_\beta})}{\lambda(\tilde{\bm{{\bm{\theta}}}}_\beta, \beta, \gamma)} \underset{n \rightarrow \infty}{\overset{L}{\rightarrow}} Z^2.
\end{equation}

As we have noted before, $J_{\beta}$ and $K_{\beta}$ do not have neat closed form expressions. This is true for $A_{\gamma}$ as well. As a result, instead of trying to find a more detailed form of $\lambda(\tilde{\bm{{\bm{\theta}}}}_\beta, \beta, \gamma)$, we will make use of numerical approximations. 

\subsubsection{Real data example: Shoshoni rectangles.} 
We have previously examined these data in Section \ref{shoshoni}. The focal point of our discussion there was the minimum EWD estimation of $\bm{{\bm{\theta}}} = (\mu, \sigma)^T$ when the data are assumed to come from a $N(\mu, \sigma^2)$ distribution, both location and scale parameters being unknown. \cite{hettmansperger2010} note that if we were to implement the $t$-test to test for the hypothesis 
\begin{equation}
    H_0: \mu = 0.618 \text{ versus } H_1: \mu \neq 0.618,
    \label{hypo}
\end{equation}
 we would get a $p$ value of 0.053, which is at the borderline of significance at 5\% level. On the other hand, the nonparametric one-sample sign test 
returns an entirely insignificant $p$ value of 0.823. It would be interesting to investigate the performance of the EWDTS in such a scenario. 

In Figure \ref{fig:10}, we present a graph of the $p$-values of the test statistic $T_{\beta}(\hat{\bm{{\bm{\theta}}}}_\beta, \tilde{\bm{{\bm{\theta}}}}_\beta)$ for testing the hypothesis in Equation (\ref{hypo}) over a set of values of $\beta$. For $\beta \rightarrow 0$, the EWDTS becomes equivalent to, asymptotically (in $n$), the ordinary likelihood ratio test under the null. This statistic returns a significant $p$-value of 0.045 for the hypothesis in Equation \ref{hypo}, but with increasing $\beta$, the significance turns to insignificance very fast. It may be noted that the $p$-value for the $t$-test statistic for the outlier deleted (the three large outliers removed) data is $0.329$, which conforms to the $p$-values corresponding to the EWDTS for moderately large positive values of $\beta$. 

\begin{figure}[h]
    \centering
    \includegraphics[width=\textwidth]{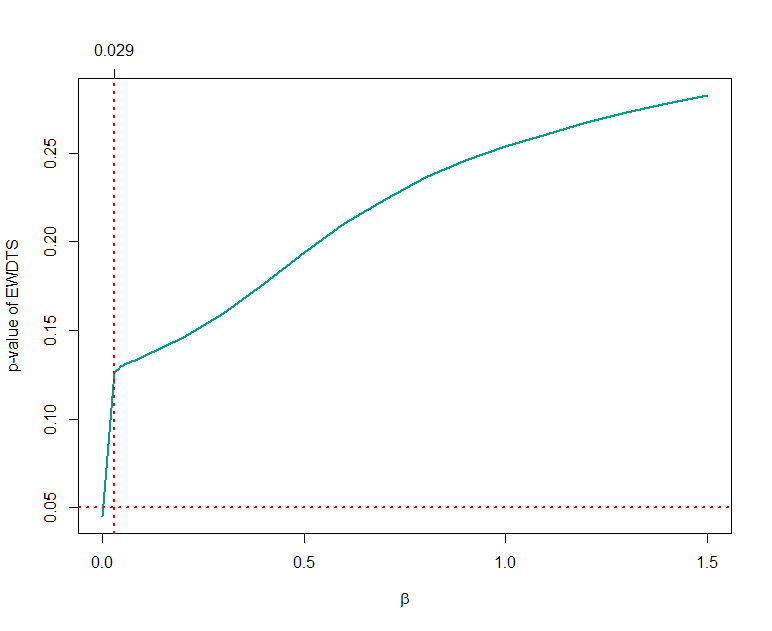}
    \caption{$p$-value of EWDTS for values of $\beta$ for Shoshoni rectangles data}
    \label{fig:10}
\end{figure}

\section{Concluding remarks}
In this paper, we have presented an estimator based on a sub-class of density-based Bregman divergences, which is seen to be outperforming the existing standard (i.e., the DPD based estimator). We have shown several asymptotic and distributional properties of the proposed estimator, both in the context of i.i.d data as well as independent and non-homogenous data. A special case of linear regression (both simple and multiple) has been explored in the context of real data. We have also discussed `judicial' choice(s) of the tuning parameter which, when chosen properly, yields highly robust and efficient estimators which can often dominate the MDPDE. We have also considered an hypothesis testing strategy for parameteric models which may serve as robust alternatives to the classical likelihood ratio and other likelihood based tests. As we have noted, the weight function generated by EWD converges to 1 as its argument, the value of the density function, increases. We feel that this is the more balanced way for weighting the observations, rather than the weighing provided by the DPD, where the weights increase indefinitely with increase in the value of the density. It may also be mentioned that the proposal based on the EWD has the potential to be useful in all the situations where the DPD has been successfully applied, such as generalized linear models, survival analysis and Bayesian inference, to name a few. We hope to pursue all of these in our future research. 

In case of the hypothesis testing problem, here we have only investigated the analogues of the likelihood-ratio type tests. Other procedures, Wald-type tests based on the EWD, for example,  should also be studied. The DPD based Wald-type test has been extensively used the literature, and comparisons with EWD based tests will be interesting.

\newpage
\appendix
\section{The $B$ function of EWD($\beta$)}
\label{appA}
\begin{equation*}
\begin{aligned}
B(x) &= \frac{x^2}{\beta} \sum_{n=0}^{\infty} \frac{(-x/\beta)^n}{(n+2)! (n+1)} \\
     &= \frac{x^2}{\beta} \sum_{n=2}^{\infty} \frac{(-x/\beta)^{n-2}}{(n)! (n-1)} \\
     &= \frac{x^2}{\beta} \frac{\beta^2}{x^2} \sum_{n=2}^{\infty} \frac{(-x/\beta)^n}{(n)!} \int_{0}^{1} t^{n-2} dt \\
     &= \beta \int_{0}^{1} \frac{1}{t^2} \sum_{n=2}^{\infty} \frac{(-xt/\beta)^n}{(n)!} dt \\
     &= \beta \int_{0}^{1} \frac{\Big(\exp(-xt/\beta) - 1 + \frac{xt}{\beta} \Big)}{t^2}  dt \\
     &= \beta \int_{0}^{1} \frac{\Big(\exp(-xt/\beta) - 1 + \frac{xt}{\beta} \Big)}{t^2}  dt \\
     &= x \int_{0}^{x/\beta} \frac{\Big(\exp(-t) - 1 + t \Big)}{t^2}  dt \\
     &= x \Bigg[ - \frac{\exp(-t) - 1 + t}{t} \biggr\rvert_{0}^{x/\beta} - \int_0^{x/\beta} \Big[ \frac{\exp (-t) - 1}{t} \Big] dt \Bigg]   \\
     &= x [ I_1 - I_2]
\end{aligned}
\end{equation*}
where
\begin{equation*}
x \cdot I_1=   \beta -\beta \exp(-x/\beta) - x,
\end{equation*}
and
\begin{equation*}
\begin{aligned}
x \cdot I_2 &=x \Bigg\{\int_0^{x/\beta} \Big[ \frac{\exp (-t) - 1}{t} \Big] dt\Bigg\}\\
 &=x \Bigg\{\lim_{\Delta \rightarrow \infty}  \Bigg[\int_0^{\Delta} \Big[ \frac{\exp (-t) - 1}{t} \Big] dt - \int_{x/\beta}^{\Delta} \Big[ \frac{\exp (-t) - 1}{t} \Big] dt\Bigg] \Big\}\\
     &= x \Bigg\{\lim_{\Delta \rightarrow \infty} \Bigg[ \log(\Delta)[\exp(-\Delta) - 1]  +  \int_0^{\Delta} \Big[ \log(t) \exp (-t)  \Big] dt \\
     & \quad \quad \quad \quad - \int_{x/\beta}^{\Delta} \frac{\exp(-t)}{t} dt + \log \Big(\frac{\Delta}{x/\beta}\Big) \Bigg]\Bigg\}\\
     &= x \Bigg[ \lim_{\Delta \rightarrow \infty} \log(\Delta) \exp(-\Delta) + \int_0^\infty \log(t)\exp(-t) dt - \int_{x/\beta}^\infty \frac{\exp(-t)}{t}dt - \log(x/\beta)\Bigg]\\
&= x \cdot [0 - \gamma - \Gamma(0, x/\beta) - \log(x/\beta)]\\
     &= - x \cdot [\gamma + \Gamma(0, x/\beta)+ \log(x/\beta)].
    \end{aligned}
\end{equation*}
Here $\gamma$ is the Euler-Mascheroni constant, usually defined as
$$\gamma = \underset{n \rightarrow \infty}{\lim} \Bigg(\sum_{k=1}^{n} \frac{1}{k} - \log n\Bigg) = - \int_0^{\infty} \exp(-x) \log(x) dx,$$ and $\Gamma(\alpha,\beta)$ is the incomplete Gamma integral defined as $$\Gamma(\alpha,\beta) = \int_{\beta}^{\infty} y^{\alpha-1} \exp(-y) dy.$$ Finally, we can write
\begin{equation*}
\begin{aligned}
B(x) &= \frac{x^2}{\beta} \sum_{n=0}^{\infty} \frac{(-x/\beta)^n}{(n+2)! (n+1)} \\
    &= x [ I_1 - I_2 ]\\
    &= -x + \gamma x + \beta  - \beta \exp(-x/\beta) + x \Gamma(0, x/\beta) + x\log(x/\beta).
\end{aligned}
\end{equation*}

\section{Additional examples of simple linear regression}
\subsubsection{Hertzsprung-Russell Star Cluster data}
\label{appHR}
We consider the data for the {\em Hertzsprung-Russell diagram}
of the star cluster CYG OB1 containing 47 stars in the direction of Cygnus  \cite[Table 3, Chap. 2]{rousseeuw1987}. For these data the independent variable $x$ is the logarithm of the effective temperature at the surface of
the star ($T_e$), and the dependent variable $y$ is the logarithm of its light intensity
($L/L_0$). The data were thoroughly studied by \cite*{rousseeuw1987}
who inferred that there are two groups of data-points — four data points (in the top right corner of the scatter plot) clearly form a separate group
in comparison with the rest of the data-points. These data points are known as giants in astronomy. So, these outliers are not recording errors but are actually leverage points with the interpretation that the data are coming from two different groups. Estimates of the linear regression parameters obtained by the minimum DPD and minimum EWD methods are presented in Tables \ref{tab:t9} and \ref{tab:t10}, respectively. 
\begin{table}[h]
\centering
\caption{$\hat{{\bm{\theta}}}$ for Hertzsprung-Russell dataset using MDPDE(D($\alpha$))}
\label{tab:t9}
\begin{tabular}{lccccccc}
\toprule
Estimates & MLE & D($0.01$) & D($0.05$) & D($0.1$) & D($0.25$) & D($0.5$) & D($1$) \\ \midrule
Intercept & $6.793$ & $6.796$ & $6.803$ & $6.816$ & $-5.797$ & $-8.027$ & $-8.405$ \\
Slope & $-0.413$ & $-0.414$ & $-0.415$ & $-0.417$ & $2.440$ & $2.943$ & $3.062$ \\ \midrule
Error s.d. & $0.565$ & $0.554$ & $0.560$ & $0.566$ & $0.405$ & $0.393$ & $0.392$ \\ \bottomrule
\end{tabular}
\end{table}

\begin{table}[h]
\centering
\caption{$\hat{{\bm{\theta}}}$ for Hertzsprung-Russell dataset using MEWDE(E($\beta$))}
\label{tab:t10}
\begin{tabular}{lccccccc}
\toprule
Estimates & MLE & E($0.01$) & E($0.05$) & E($0.1$) & E($0.25$) & E($0.5$) & E($1$) \\ \midrule
Intercept  & $6.793$ & $6.795$ & $-8.236$ & $-8.395$ & $-8.537$ & $-8.408$ & $-8.373$ \\
Slope & $-0.413$ & $-0.414$ & $2.988$ & $3.024$ & $3.057$ & $3.014$ & $2.993$ \\ \midrule
Error s.d. & $0.565$ & $0.561$ & $0.355$ & $0.359$ & $0.376$ & $0.213$ & $0.123$ \\  \bottomrule
\end{tabular}
\end{table}

\begin{figure}
  \centering
  \captionsetup{justification=centering}
  \includegraphics[height=0.4\textheight]{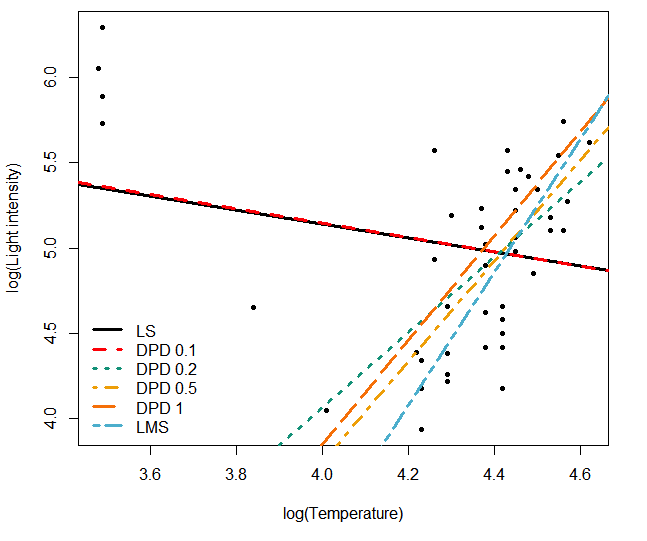}
  \caption{Data-points and fitted regression lines for Hertzsprung Russell star cluster data using least squares and minimum DPD estimates.}
\end{figure}
\begin{figure}
  \centering
  \captionsetup{justification=centering}
\includegraphics[height=0.4\textheight]{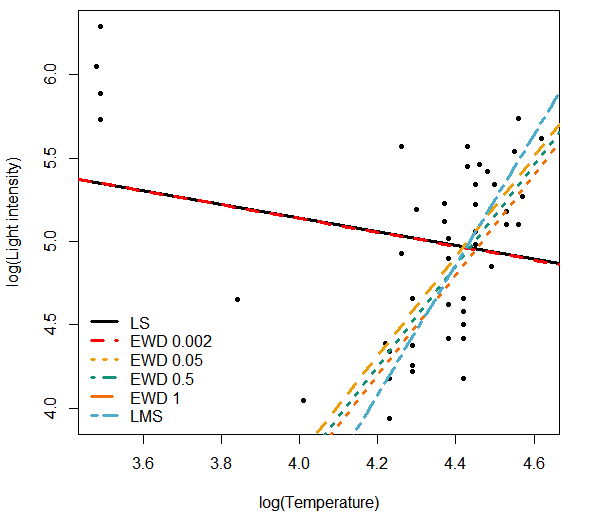} 
  \caption{Data-points and fitted regression lines for Hertzsprung Russell star cluster data using least squares and minimum EWD estimates.}
\end{figure}

We observe that
\begin{enumerate}
  \item Clearly that the estimators corresponding to $\alpha=0 \text{ and } \beta=0$ (which are  identical and also coincide with the ordinary least squares estimators) are pulled away significantly by the four leverage points and hence it is not possible to separate out the two group of data by looking at the corresponding residuals.
  \item The MDPDE with $\alpha \geq 0.25$ can successfully ignore the outliers to give excellent robust fits and are much closer to the fit generated by the LMS estimates.
  \item The MEWDE with $\beta \geq 0.05$ are strongly robust with respect to the outliers, giving excellent fits to the remaining observations.
  \item For both MDPDE and MEWDE methods, based on the residuals , we can also separate out the two group of observations – four large residuals correspond to the four giant stars.
\end{enumerate}
Thus the analysis based on DPD and EWD give stable and competitive inference in this case.

\subsubsection{Number of international telephone calls in Belgium (1950-73)}
\label{appBT}
We consider a segment of data obtained from the {\em Belgian Statistical Survey} by the Ministry of
Economy, Belgium  \cite[Table 2, Chap. 2]{rousseeuw1987}. Here, the total number (in tens of millions) of international
phone calls made in a year is the dependent variable $y$. The independent variable is the year number $x = 50, 51, \ldots, 73$. However, due to the use of another recording system (giving the total number of minutes of these calls) from the
year 1964 to 1969, the data contain heavy contamination in the y-direction in
that range. The years 1963 and 1970 are also partially affected for the same
reason. Estimates of the linear regression parameters obtained by the minimum DPD and minimum EWD methods are presented in Tables \ref{tab:t11} and \ref{tab:t12} respectively. 

\begin{table}[h]
\centering
\caption{$\hat{{\bm{\theta}}}$ for Belgium telephone data using MDPDE(D($\alpha$)).}
\label{tab:t11}
\begin{tabular}{lcccccc}
\toprule
Estimates & D($0$) & D($0.05$) & D($0.1$) & D($0.25$) & D($0.5$) & D($1$) \\ \midrule
Intercept & $-26.01$ & $-25.53$ & $-24.94$ & $-21.97$ & $-5.260$ & $-5.360$ \\
Slope &  $0.500$ & $0.500$ & $0.480$ & $0.430$ & $0.110$ & $0.110$ \\\midrule
Error s.d. & $5.380$ & $5.400$ & $5.410$ & $5.290$ & $0.110$ & $0.120$\\ \bottomrule
\end{tabular}
\end{table}

\begin{table}[h]
\centering
\caption{$\hat{{\bm{\theta}}}$ for Belgium telephone data using MEWDE(E($\beta$)).}
\label{tab:t12}
\begin{tabular}{lcccccc}
\toprule
Estimates& E($0$) & E($0.05$) & E($0.1$) & E($0.25$) & E($0.5$) & E($1$) \\ \midrule
Intercept & $-26.010$ & $-5.180$ & $-5.190$ & $-5.180$ & $-5.040$ & $-5.660$ \\
Slope &  $0.500$ & $0.110$ & $0.110$ & $0.110$ & $0.110$ & $0.120$ \\\midrule
Error s.d. & $5.380$ & $0.090$ & $0.090$ & $0.090$ & $0.080$ & $0.060$ \\ \bottomrule
\end{tabular}
\end{table}

\begin{figure}
  \centering
  \captionsetup{justification=centering}
  \includegraphics[height=0.4\textheight]{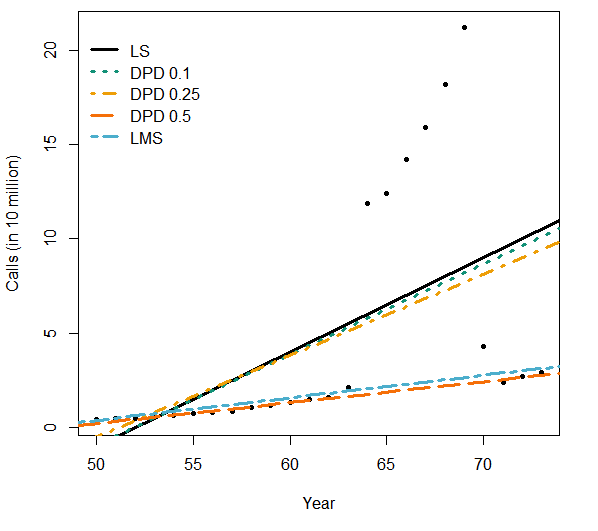}
  \caption{Plots of the data-points and fitted regression lines for Belgium telephone data using least squares and minimum DPD estimates.}
\end{figure}

\begin{figure}
  \centering
  \captionsetup{justification=centering}
\includegraphics[height=0.4\textheight]{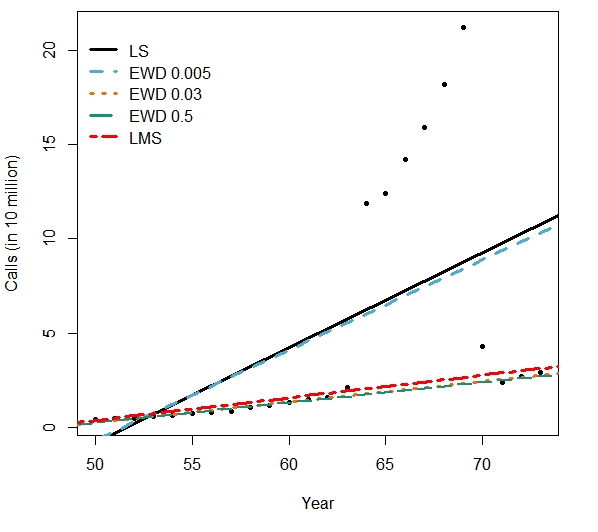}
  \caption{Plots of the data-points and fitted regression lines for Belgium telephone data using least squares and minimum EWD estimates.}
\end{figure}

We make the following observations.
\begin{enumerate}
\item It is clear that the estimators corresponding to $\alpha = 0 \text{ and } \beta = 0$ (which are identical and coincide with the ordinary LS estimators) are heavily affected by the outliers.
\item The MDPDE with $\alpha \geq 0.5$ are strongly robust with respect to the outliers, giving excellent fits to the remaining observations. While analyzing this dataset, \cite{ghosh2013} note that the slope parameter remains practically constant for all $\alpha \geq 0.4$.
\item The MEWDE with $\beta \geq 0.05$ are strongly robust with respect to the outliers, giving excellent fits to the remaining observations. We note that the estimated regression parameters do not differ by much for all $\beta \geq 0.05$ when compared to the outlier-influenced ML regression estimates.
\item The least square estimators of the regression parameters, after deleting the outlying observations corresponding to the years 1964 to 1970 are $-5.260$, $0.111$ and $0.146$, quite close to all our robust estimators.
    \end{enumerate}
Clearly, the performance of the MDPDEs and the MEWDEs are quite competitive in this example.

\subsubsection{Residual analysis of certain fits for alcohol solubility data.}
\label{app_alcohol}
In continuation with the example considered in Section \ref{alcohol} of the main article, we have, in Figure \ref{fig:appb31}, presented  the residual plots (against fitted values) of some fits (ML, LMS, ML+D (outlier deleted) and minimum EWD(0.66)) for the alcohol solubility data \citep{maronna2019}. In Figure \ref{fig:appb32} we present the kernel density estimates of the residuals of the same fits. 

As noted in Section \ref{alcohol}, we observe that the LMS and minimum EWD(0.66) procedures identify a few outliers. On the other hand, these observations remain masked in case of the maximum likelihood method, while the ML+D method does not identify any outlier. This is indicated by the lack of the long tails for the ML and ML+D methods.

\begin{figure}[htp]
  \centering
  \captionsetup{justification=centering}
  \includegraphics[width=0.8\textwidth]{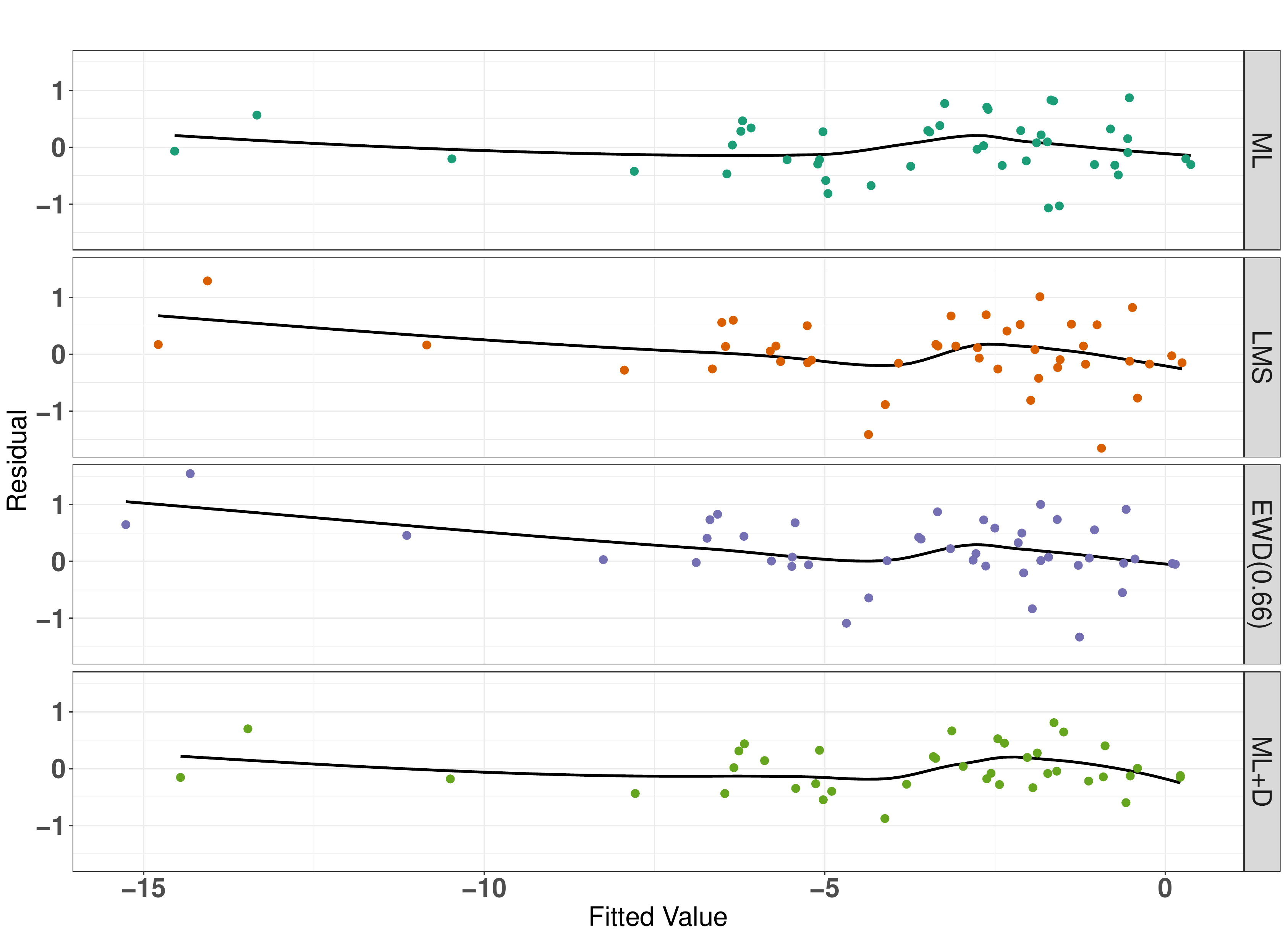}
  \caption{Scatter plots of the residuals against fitted values for ML, LMS, ML+D and minimum EWD($0.66$) fits for alcohol solubility data \citep{maronna2019}.}
  \label{fig:appb31}
  \end{figure}

\begin{figure}[htp]
  \centering
  \captionsetup{justification=centering}
  \includegraphics[width=0.8\textwidth]{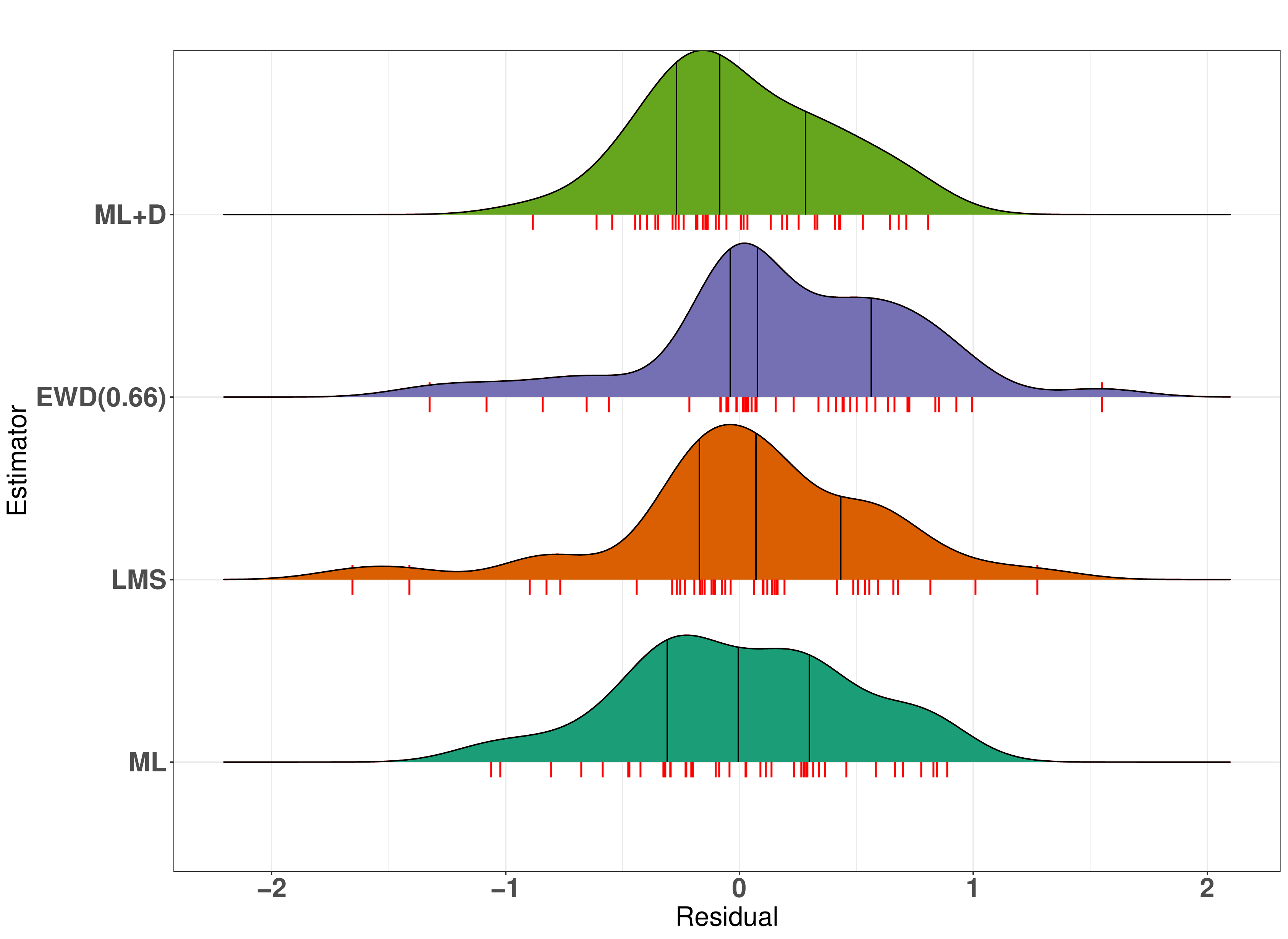}
  \caption{Kernel density estimates of the residuals arising from ML, LMS, ML+D and minimum EWD($0.66$) fits for alcohol solubility data \citep{maronna2019}. Vertical black lines correspond to $25^{\text{th}}$, $50^{\text{th}}$ and $75^{\text{th}}$ percentiles of corresponding density curves, while red rug-lines correspond to the actual residuals from which the kernel density estimates are obtained.}
  \label{fig:appb32}
  \end{figure}

\bibliographystyle{plainnat}
\bibliography{article}  

\begin{thebibliography}{26}
\providecommand{\natexlab}[1]{#1}
\providecommand{\url}[1]{\texttt{#1}}
\expandafter\ifx\csname urlstyle\endcsname\relax
  \providecommand{\doi}[1]{doi: #1}\else
  \providecommand{\doi}{doi: \begingroup \urlstyle{rm}\Url}\fi

\bibitem[Basak et~al.(2020)Basak, Basu, and Jones]{basak2020optimal}
Sancharee Basak, Ayanendranath Basu, and MC~Jones.
\newblock On the ‘optimal’ density power divergence tuning parameter.
\newblock \emph{Journal of Applied Statistics}, 2020.
\newblock URL \url{https://doi.org/10.1080/02664763.2020.1736524}.

\bibitem[Basu et~al.(1998)Basu, Harris, Hjort, and Jones]{basu1998}
Ayanendranath Basu, Ian~R Harris, Nils~L Hjort, and MC~Jones.
\newblock Robust and efficient estimation by minimising a density power
  divergence.
\newblock \emph{Biometrika}, 85\penalty0 (3):\penalty0 549--559, 1998.

\bibitem[Basu et~al.(2011)Basu, Shioya, and Park]{basu2011}
Ayanendranath Basu, Hiroyuki Shioya, and Chanseok Park.
\newblock \emph{{Statistical Inference: The Minimum Distance Approach}}.
\newblock Chapman and Hall/CRC, 2011.

\bibitem[Basu et~al.(2013)Basu, Mandal, Martin, and Pardo]{basu2013testing}
Ayanendranath Basu, Abhijit Mandal, N~Martin, and L~Pardo.
\newblock Testing statistical hypotheses based on the density power divergence.
\newblock \emph{Annals of the Institute of Statistical Mathematics},
  65\penalty0 (2):\penalty0 319--348, 2013.

\bibitem[Basu et~al.(2018)Basu, Mandal, Martin, and Pardo]{basu2018testing}
Ayanendranath Basu, Abhijit Mandal, Nirian Martin, and Leandro Pardo.
\newblock Testing composite hypothesis based on the density power divergence.
\newblock \emph{Sankhya B}, 80\penalty0 (2):\penalty0 222--262, 2018.

\bibitem[Beran(1977)]{beran1977}
Rudolf Beran.
\newblock Minimum {Hellinger} distance estimates for parametric models.
\newblock \emph{The Annals of Statistics}, 5\penalty0 (3):\penalty0 445--463,
  1977.

\bibitem[Biswas et~al.(2020)Biswas, Ghosh, and Basu]{biswas2019}
Adhidev Biswas, Abhik Ghosh, and Ayanendranath Basu.
\newblock Minimum bregman divergence and weighted likelihood: a comprehensive
  study.
\newblock \emph{Indian Statistical Institute}, 2020.

\bibitem[Bregman(1967)]{bregman1967}
Lev~M Bregman.
\newblock {The Relaxation Method of Finding the Common Point of Convex Sets and
  its Application to the Solution of Problems in Convex Programming}.
\newblock \emph{USSR Computational Mathematics and Mathematical Physics},
  7\penalty0 (3):\penalty0 200--217, 1967.

\bibitem[Broniatowski et~al.(2012)Broniatowski, Toma, and
  Vajda]{broniatowski2012decomposable}
Michel Broniatowski, Aida Toma, and Igor Vajda.
\newblock Decomposable pseudodistances and applications in statistical
  estimation.
\newblock \emph{Journal of Statistical Planning and Inference}, 142\penalty0
  (9):\penalty0 2574--2585, 2012.

\bibitem[Csisz{\'a}r(1963)]{csiszar1963}
Imre Csisz{\'a}r.
\newblock Eine informationstheoretische ungleichung und ihre anwendung auf
  beweis der ergodizitaet von markoffschen ketten.
\newblock \emph{Magyer Tud. Akad. Mat. Kutato Int. Koezl.}, 8:\penalty0
  85--108, 1963.

\bibitem[Csisz{\'a}r et~al.(1991)]{csiszar1991}
Imre Csisz{\'a}r et~al.
\newblock {Why least squares and maximum entropy? An axiomatic approach to
  inference for linear inverse problems}.
\newblock \emph{The Annals of Statistics}, 19\penalty0 (4):\penalty0
  2032--2066, 1991.

\bibitem[Ghosh and Basu(2013)]{ghosh2013}
Abhik Ghosh and Ayanendranath Basu.
\newblock Robust estimation for independent non-homogeneous observations using
  density power divergence with applications to linear regression.
\newblock \emph{Electronic Journal of Statistics}, 7:\penalty0 2420--2456,
  2013.

\bibitem[Hampel et~al.(1986)Hampel, Ronchetti, Rousseeuw, and
  Stahel]{hampel1986}
Frank~R Hampel, Elvezio~M Ronchetti, Peter~J Rousseeuw, and Werner~A Stahel.
\newblock \emph{Robust Statistics: The approach based on Influence Functions}.
\newblock John Wiley \& Sons, 1986.

\bibitem[Hettmansperger and McKean(2010)]{hettmansperger2010}
Thomas~P Hettmansperger and Joseph~W McKean.
\newblock \emph{Robust nonparametric statistical methods}.
\newblock CRC Press, 2010.

\bibitem[Huber and Ronchetti(2009)]{huber2009}
Peter~J Huber and Elvezio~M Ronchetti.
\newblock \emph{Robust Statistics}.
\newblock John Wiley \& Sons, 2009.

\bibitem[Ibragimov and Has'Minskii(1981)]{ibragimov1981}
Il'dar~Abdulovic Ibragimov and Rafail~Zalmanovich Has'Minskii.
\newblock \emph{Statistical estimation: asymptotic theory}, volume~16.
\newblock Springer Science \& Business Media, 1981.

\bibitem[Jana and Basu(2019)]{jana2019characterization}
Soham Jana and Ayanendranath Basu.
\newblock A characterization of all single-integral, non-kernel divergence
  estimators.
\newblock \emph{IEEE Transactions on Information Theory}, 65\penalty0
  (12):\penalty0 7976--7984, 2019.

\bibitem[Lindsay(1994)]{lindsay94}
B.~G. Lindsay.
\newblock Efficiency versus robustness: the case for minimum {Hellinger}
  distance and related methods.
\newblock \emph{The Annals of Statistics}, 22\penalty0 (2):\penalty0
  1081--1114, 1994.

\bibitem[Maronna et~al.(2019)Maronna, Martin, Yohai, and
  Salibi{\'a}n-Barrera]{maronna2019}
Ricardo~A Maronna, R~Douglas Martin, Victor~J Yohai, and Mat{\'\i}as
  Salibi{\'a}n-Barrera.
\newblock \emph{{Robust Statistics: Theory and Methods (with R)}}.
\newblock John Wiley \& Sons, 2019.

\bibitem[Pardo(2006)]{pardo2006statistical}
Leandro Pardo.
\newblock \emph{Statistical inference based on divergence measures}.
\newblock CRC press, 2006.

\bibitem[Roser and Ritchie(2020)]{owidhomicides}
Max Roser and Hannah Ritchie.
\newblock Homicides.
\newblock \emph{Our World in Data}, 2020.
\newblock https://ourworldindata.org/homicides.

\bibitem[Rousseeuw and Leroy(1987)]{rousseeuw1987}
Peter~J Rousseeuw and Annick~M Leroy.
\newblock \emph{Robust regression and outlier detection}, volume~1.
\newblock Wiley Online Library, 1987.

\bibitem[Simpson(1989)]{simpson1989}
Douglas~G Simpson.
\newblock Hellinger deviance tests: efficiency, breakdown points, and examples.
\newblock \emph{Journal of the American Statistical Association}, 84\penalty0
  (405):\penalty0 107--113, 1989.

\bibitem[The CIA World~Factbook()]{cia}
Central Intelligence~Agency The CIA World~Factbook.
\newblock Country comparison :: Gdp - per capita (ppp).
\newblock \emph{Central Intelligence Agency}.
\newblock URL
  \url{https://www.cia.gov/library/publications/the-world-factbook/rankorder/2004rank.html}.

\bibitem[Warwick and Jones(2005)]{warwick2005}
J~Warwick and MC~Jones.
\newblock Choosing a robustness tuning parameter.
\newblock \emph{Journal of Statistical Computation and Simulation}, 75\penalty0
  (7):\penalty0 581--588, 2005.

\bibitem[Woodruff et~al.(1984)Woodruff, Mason, Valencia, and
  Zimmering]{woodruff1984chemical}
RC~Woodruff, JM~Mason, R~Valencia, and S~Zimmering.
\newblock Chemical mutagenesis testing in drosophila: I. comparison of positive
  and negative control data for sex-linked recessive lethal mutations and
  reciprocal translocations in three laboratories.
\newblock \emph{Environmental mutagenesis}, 6\penalty0 (2):\penalty0 189--202,
  1984.

\end{thebibliography}
\end{document}